\documentclass{amsart}
\usepackage{geometry}
\usepackage{graphicx} % Required for inserting images
\usepackage{url}
\usepackage{amsmath}
\usepackage{amssymb}
\usepackage{bbold}
\usepackage{amsthm}
\usepackage{enumitem}
\usepackage{mathrsfs}
\usepackage{stmaryrd}
\usepackage{proof}
\usepackage[all]{xy}
\usepackage{tikz-cd}
\usepackage{mathtools}
\usepackage{ebproof}

\theoremstyle{definition}
\newtheorem{definition}{Definition}
\newtheorem{example}[definition]{Example}
\newtheorem{lemma}[definition]{Lemma}
\newtheorem{proposition}[definition]{Proposition}
\newtheorem{theorem}[definition]{Theorem}

\newtheorem{remark}[definition]{Remark}

\newcommand{\justification}[1]{\mathbf{#1}}
\newcommand{\seq}{\vartriangleright}
\newcommand{\derivation}[1]{\mathcal{#1}}

\newcommand{\base}[1]{\mathcal{#1}}
\newcommand{\baseA}{\base{A}}
\newcommand{\baseB}{\base{B}}
\newcommand{\baseC}{\base{C}}

\newcommand{\baseP}{\base{P}}

\newcommand{\baseMill}{\base{M}}

\newcommand{\atRule}[1]{\mathcal{#1}}

\newcommand{\baseop}[1]{{\baseB}{#1}} % atoms to base rules

\newcommand{\Pfrak}{\mathfrak{P}} % sets of inferonic atoms

\newcommand{\inferon}[4]{\llangle {#1}_{#4} , \mathcal{#2}_{#4} , \mathfrak{#3}_{#4} \rrangle}

\newcommand{\inferonPlain}[4]{\llangle {#1}_{#4} , {#2}_{#4} , \mathfrak{#3}_{#4} \rrangle}

\newcommand{\inferonSub}[6]{\llangle {#1}_{#2} , \mathcal{#3}_{#4} , \mathfrak{#5}_{#6} \rrangle}

\newcommand{\inferonSubSup}[7]{\llangle {#1}_{#2} , \mathcal{#3}_{#4}^{#5} , \mathfrak{#6}_{#7} \rrangle}

\newcommand{\inferatom}[3]{\llangle {#1}_{#3} , \mathfrak{#2}_{#3} \rrangle}

\newcommand{\agentd}[1]{\langle #1 \rangle}  
\newcommand{\agentb}[1]{[{#1}]}

\newcommand{\notequiv}{\not\equiv}
\newcommand{\setofterms}{\mathcal{T}}
\newcommand{\cl}[1]{\mathrm{Cl}({#1})} % set of [terms etc.] that are closed

\makeatletter
\newsavebox{\@brx}
\newcommand{\llangle}[1][]{\savebox{\@brx}{\(\m@th{#1\langle}\)}% left double angle-bracket
  \mathopen{\copy\@brx\mkern2mu\kern-0.9\wd\@brx\usebox{\@brx}}}
\newcommand{\rrangle}[1][]{\savebox{\@brx}{\(\m@th{#1\rangle}\)}% right double angle-bracket
  \mathclose{\copy\@brx\mkern2mu\kern-0.9\wd\@brx\usebox{\@brx}}}
\makeatother

\usepackage{xcolor}

\newcommand{\fillBox}{\hfill$\Box$}

\newcommand{\predON}[1]{\mathrm{ON}{#1}} % Flashlight Example
\newcommand{\predLIT}[1]{\mathrm{LIT}{#1}} % Example
\newcommand{\predONflash}[1]{\mathrm{fON}{#1}} % Example
\newcommand{\predLITflash}[1]{\mathrm{fLIT}{#1}} % Example

\begin{document}

% \title[Inferonics]{Inferonics: An Inferentialist Account of Information Through Proof-theoretic Semantics}
\title[Towards an Inferentialist Account of Information]{Towards an Inferentialist Account of Information Through Proof-theoretic Semantics} 
\author{Matthew Collinson$^*$}%\thanks{$^*$Corresponding author.} 
\address{University of Aberdeen, King's College, Aberdeen AB24 3FX, Scotland, UK}
\email{matthew.collinson@aberdeen.ac.uk}
\author{Timo Eckhardt$^*$}%\thanks{$^*$Corresponding author.} 
\address{UCL \& Institute of Philosophy, University of London, Senate House, Malet Street, London WC1E 7HU, England, UK}
\email{t.echkhardt@ucl.ac.uk}
\author{David Pym$^*$}\thanks{$^*$Corresponding author.} 
\address{UCL \& Institute of Philosophy, University of London, Senate House, Malet Street, London WC1E 7HU, England, UK {\tt https://www.cantab.net/users/david.pym/}}
\email{david.pym@sas.ac.uk}

\date{}

\begin{abstract}
Information is one of the most widely-discussed concepts of the current era. 
However, a great deal of insightful work notwithstanding, it is yet to be given wholly convincing logical or mathematical foundations. Without them, we lack adequate reasoning tools for understanding the complex ecosystems of systems upon which the society depends. We seek to rectify this by taking a first step towards developing an inferentialist semantic theory of information. There are three key interacting components. First, conceptual analysis: the metaphysics of information. Dretske expressed the key concepts of information in terms of intentionality, truth, and transmissibility. We replace truth with inferability, and trace the consequences of this replacement. Second, logic: proof-theoretic semantics (P-tS) provides a mathematical-logical realization of inferentialist reasoning. Using P-tS, we develop the first steps towards a mathematical-logical theory of an inferentialist primitive unit of information, the `inferon'. This 
proof-theoretic approach counterpoints the model-theoretic view of information articulated in situation theory. Furthermore, we argue that it facilitates addressing all three components of van Benthem and Martinez's categorization of the understandings of information, as range, as correlation, and as code. Our focus is on information-as-correlation. Third, systems: the P-tS tools we develop provide the basis for a mathematical account of distributed systems modelling --- a key tool from informatics for understanding the organization of information processing systems. This yields a reasoning-based theory of information flow in models of distributed systems. Overall, we seek to give a conceptually rigorous mathematical-logical account of information and its role within informatics, grounded in inference and reasoning.
\end{abstract}

\maketitle

\section{Introduction} \label{sec:introduction}

% \begin{enumerate}
% \item[--] knowledge, background, \cite{Dretske1999}
% \item[--] a basic review of Dretske's position on metaphysics of information, 
%    with a brief, first-cut discussion of the effect of 
%    replacing truth with inference

%    motivation? 

% \item[--] a basic review of the key points of situation theory in the 
%    sense of barwise/devlin/seligman 

% \item[--] a brief summary of the salient bits of B-eS 

% \item[--] basic set up of inferons, with examples, and discussion of 
%    design choices? what can we say about proof systems for the logic(s) of inferons? s and c theorems 

%    agency? intensional vs extensional?  

% \item[--] basics of the implied approach to information flow, with
%    examples

%    relate to B-eS dist. syst. modelling set up? 

% \item[--] inferomorphism equiv theorem 

% \item[--] directions --- plenty, especially epistemic
% \end{enumerate}

Information is one of the most widely-discussed concepts of the current era. 
However, a great deal of insightful work notwithstanding, it is yet to be given wholly convincing logical or mathematical foundations. Without them, we lack adequate reasoning tools for understanding the complex ecosystems of systems upon which the society depends. We seek to rectify this by taking a first step towards developing an inferentialist semantic theory of information. There are three key interacting components. First, conceptual analysis: the metaphysics of information. Dretske expressed the key concepts of information in terms of intentionality, truth, and transmissibility. We replace truth with inferability, and trace the consequences of this replacement. Second, logic: proof-theoretic semantics (P-tS) provides a mathematical-logical realization of inferentialist reasoning. Using P-tS, we develop the first steps towards a mathematical-logical theory of an inferentialist primitive unit of information, the `inferon'. This 
proof-theoretic approach counterpoints the model-theoretic view of information articulated in situation theory. Furthermore, we argue that it facilitates addressing all three components of van Benthem and Martinez's categorization of the understandings of information, as range, as correlation, and as code. Our focus is on information-as-correlation. Third, systems: the P-tS tools we develop provide the basis for a mathematical account of distributed systems modelling --- a key tool from informatics for understanding the organization of information processing systems. This yields a reasoning-based theory of information flow in models of distributed systems. Overall, we seek to give a conceptually rigorous mathematical-logical account of information and its role within informatics, grounded in inference and reasoning.

Barwise and Seligman, \emph{Information Flow: The Logic of Distributed Systems}, Cambridge University Press, 1997, page~22:  
\begin{quote}
Inference has something crucial to do with information. On the one hand, inference is often characterized as the extraction of implicit information from explicit information. This seems reasonable, because it is clearly possible to
obtain information by deduction from premisses. In carrying out these inferences in real life, we take as premisses some background theory of how the world works. On the other hand, getting information typically requires inference.
\end{quote} 

Our starting point is the position of inferentialism --- that the meaning of linguistic entities, including propositions and proofs, is determined by their use. Inferentialism has been articulated with clarity and elegance by Brandom (and also Hlobil) in a sequence of major publications, most notably \cite{Brandom1994, Brandom2000,Brandom2024}. We adopt an inferentialist perspective on, and reinterpretation of Dretske's metaphysics of information and employ in our formal development a mathematical-logical realization of inferentialism known as proof-theoretic semantics. 

We begin the story in Section~\ref{sec:metaphysics} where we summarize Dretske's metaphysics of information \cite{Dretske2008}, explaining how we seek to replace his requirement of truth with a requirement of inferability. We also discuss van Benthem and Martinez's characterization of formal theories of information into the categories information-as-range, information-as-correlation, and information-as-code~\cite{BenthemMartinez2008stories} and how our approach, while largely within information-as-correlation, accounts for significant parts of all three categories.

Next, we have two important background sections. The first, 
Section~\ref{sec:situations}, recalls the basic ideas of situation theory that are relevant to our development. The second, Section~\ref{sec:P-tS}, introduces the key ideas of proof-theoretic semantics, stressing what is widely know as base-extension semantics, which is an essential tool in out later development. In Section~\ref{sec:situations}, we summarize the key ideas of situation theory, as developed logically by Barwise, Etechemendy, Devlin, Perry, Seligman, and others \cite{BarwisePerry1981,BarwisePerry1983,Barwise1986,BarwiseEtchemendy1990,Devlin1991,BarwiseSeligman1997,Seligman2014}. The core idea of situation theory in our context is that of the (atomic) \emph{infon} 
--- a basic unit of information that is very closely related to a first-order atomic predicate, which derives its meaning from a first-order model. 

In Section~\ref{sec:P-tS}, we introduce the key ideas of proof-theoretic semantics \cite{Prawitz1971ideas,Schroeder2007modelvsproof,SEP-PtS,Dummett1991}, stressing the role 
of base-extension semantics \cite{sandqvist2015base,PiechaPS-H2016-AtomicSystems,SandqvistWLD2022,Sandqvist2025,Gheorghiu2025-FirstOrder,GheorghiuPym2025}, in particular as formulated for intuitionistic logic, which provides our logical point of departure. The key idea here is that of 
a \emph{base} of atomic rules, such as  %\notemc{something missing here} Yep! 
    \[
    \dfrac{\qquad}{p} \qquad 
    \dfrac{p_1 \ldots p_k}{p} \qquad \mbox{and} \qquad 
        \dfrac{\begin{array}{ccc}
               [P_1] &        & [P_k] \\ 
                p_1  & \ldots &  p_k \\ 
               \end{array}}{p} 
    \]
where each $p$ and $p_i$ is an atom, not a formula or a variable ranging over formul{\ae}. Conceptually, the key point here is that base rules are pre-logical --- they involve no use of logical operators. We discuss how bases work in Section~\ref{sec:P-tS}. 
%\notemc{I think here is a good place also to say something like (below the rules):  ``where each $p$ and $p_i$ here is an atom, not a formula or a variable ranging over formulae.'' It can be confusing on first contact for a non-pts person.} YEP, that's missing. 

In Section~\ref{sec:LogicalBasis}, we get to the core of our ideas about information, giving a range of simple examples in Section~\ref{sec:first-examples}. Central 
to this development is the concept of the inferentialist 
infon, or \emph{inferon} --- this too is a basic unit of information and also resembles a first-order atomic predicate, but its meaning is derived not from a first-order model, but rather from a global base of atomic rules, 
describing the context for the inferon, together with the 
inferon's own local base of atomic rules, describing the 
inferential capacity associated with the inferon. 

Then, in Section~\ref{sec:morphisms-flow}, we develop an account of information flow in the inferentialist setting that is analogous to the theory presented by Barwise and Seligman \cite{BarwiseSeligman1997}.      Section~\ref{sec:flow} relates the theory of information flow introduced in Section~\ref{sec:morphisms-flow} to a quite generic, yet practically motivated, approach to modelling distributed systems. Finally, in Section~\ref{sec:discussion}, we consider a range of further developments of the programme initiated by the work presented herein.

\section{The Metaphysics of Information: Dretske and van~Benthem} \label{sec:metaphysics}

Building on `Knowledge and the Flow of Information' \cite{Dretske1999}, Dretske gives a basic account of the metaphysics of information in \cite{Dretske2008}. 

Dretske \cite{Dretske2008} identifies that information is an important concept in many aspects of science and everyday life that, despite that importance, lacks a general theory. This is an especially important and difficult problem as the use of the word has become quite different and specialized as it has been integrated into different disciplines. To address this Dretske aims to identify the concept that stands behind these specific uses and constitutes our shared understanding of information. A statistician, a computing scientist, a philosopher, and a layman, for example, might all talk about information differently, but with each them we understand that what they are talking about is, in fact, information.

Dretske identifies three properties of information:
\begin{itemize}
\item[--] Intentionality 
\item[--] Truth  
\item[--] Transmissibility.  
\end{itemize}
Intentionality requires information to be about something. With any information we have to be able to ask the question `information about what?'. It follows that pieces of information must be semantic objects. A letter is not information even if the message conveyed by it might be. Furthermore, this means that the same signal might carry different information depending on the sender, the receiver, and the general circumstances, with obvious examples being words like `you' and `I'.

Truth simply requires the information to be correct. This is a much discussed property called the \textit{Veridicality Thesis} (see, for example, \cite{Floridi2011-PhilInf}). Dretske argues that misinformation or false information are not actually information, but rather are mistakenly represented as information, much like a fake diamond is not actually a diamond. The truth of information is important because of its epistemological role: Information is a necessary condition for knowledge. A false proposition works just as well to check the validity of an argument as a true one. However, as Dretske puts it `What good is flawless reasoning if everything you conclude is false?' \cite{Dretske2008}. Information is sought after exactly because of its truth and that truth, together with correct reasoning, allows us to draw true conclusions. Israel and Perry \cite{IsraelPerry1990}, in their treatment of `information reports', assume that information conveyed within such reports must be `factive'; that is, true.

Transmissibility requires information to have the potential to be sent and received.  Meaning can, for example, be transmitted by simply writing it down. For information, however, more is required. As information is only real when it is true, it is not enough to transmit the truth, but you have to do so in a way that the receiver can know the truth you transmit. For example, looking at a thermometer conveys information about the temperature because the receiver already considers the thermometer to be a reasonably accurate tool for measuring temperature. Similarly a trusted expert can convey information on a topic in a way a random person cannot. A message from an unknown source can, of course, still be information if it includes the necessary evidence to generate knowledge in the receiver.

We aim to give an inferentialist account of information and so differ from Dretske in various ways. The most significant such difference is with the notion of truth. Instead of being given in terms of truth and falsity, information is given in terms of inferability. In our specific case, this means the derivability of basic units of information from a set of atomic rules called a base. This has a significant impact on the discussion of veridicality. We conjecture that an inferentialist version of the veridicality thesis holds. Our approach requires inferences to be valid in order to count as information and so the idea that information has to be correct remains, fundamentally, preserved. As this is supposed to be a first step, a more in-depth discussion of the details and intricacies is outside the scope of this paper and left for future work (see Section~\ref{sec:discussion}).

Less obvious is how the inferentialist view impacts the idea of transmissibilty. Information still has to be sent and received, of course, but we can give a precise account of the the idea that information has to be transmitted so that the receiver can know it. We take a piece of information to include a base, as mentioned above, so that the statement the information is about can be derived from the rules in that base together with the rules the receiver already had access to. This gives an easy way of checking whether a message fails the transmissibility criterion: whenever this inference is not possible. Only intentionality remains largely unchanged. However, derivability, more so than truth, asks the question `for whom?' and so puts further importance on the question of the r{\^o}le of the agents.

% -- VT at level of informational judgements 

% -- inluding agents' inferences about situations 

% -- 

% Us: 
% \begin{itemize}
% \item[--] Intentionality and agency 
% \item[--] Inference and validity  
% \item[--] Transmissibility through flow  
% \end{itemize}

% Intentionality. 

% Dretske: 
% \begin{quote}
% A brick isn’t information, but it might carry information. It might, for instance, carry information in its shape, color, size, or material composition, about its place and manner of manufacture. Or it might, by its deliberate placement and orientation, tell (inform) us where an accomplice is hiding. Similarly, we can use a fence post as a rough sundial --- thus using the post (and its shadow) as a source of information about the time of day. Clearly, though, the information 
% provided is not the brick, the post, or the shadow. It is what the brick, post, or shadow tell us about something else --- in this case the time of day, place of manufacture, or location of an accomplice.
% \end{quote}

% \note{Generally, we should highlight (later) how our set-up fits the inferentialist versions of Dreske's ideas.}

For the purposes of this paper, we have taken Dretske's metaphysics of information \cite{Dretske1999,Dretske2008} as our point of departure. There is a significant literature on the philosophy and logic of information --- for example, Bimb\'o \cite{Bimbo2016}, Carnap \cite{Carnap1935,Carnap1951}, Dunn \cite{Dunn2001} (and `A ``Reply'' to my ``Critics''\,' in  \cite{DunnIBL}), Floridi \cite{floridi2005semantic,Floridi2011-PhilInf,Floridi2019-LogInf} to name just a few contributors of very many. 

In particular, Johan van~Benthem has written a great deal about the relationship between logic and information. In \cite{BenthemMartinez2008stories}, `The Stories of Logic and Information', van~Benthem and Martinez characterize three categories of formal theories of information:   
\begin{enumerate} 
\item \emph{Information-as-range}. In this view, information is modelled by the range of possibilities that are consistent with it --- 
the more options one has to consider, the less information one has.  From a logical point  view, epistemic logics  \cite{Meyer1995,DEL2008} beginning with \cite{vonWright1951,Hintikka1962} and including Public Announcement Logic (PAL) \cite{Baltag2016} provide the basic logical tools. 

\item \emph{Information-as-correlation}. This view takes what information is \emph{about} as the focus of study. 
Information is treated as existing in different situations with \emph{dependencies} between such situations --- situation theory, and `infon logic' \cite{BarwisePerry1983,Devlin1991,Devlin2006,BarwiseSeligman1997,Abramsky2009-ABRFIT} 
are primary tools for this view. An alternative approach is the Modal Logic of Dependence \cite{BaltagvanBenthem2021,vanBenthemLIA2025}, also providing unification with an information-as-range theory.

\item \emph{Information-as-code}. Here information is thought of as \emph{encoded} in sentences and messages. This allows for a syntactical investigation of information which relies on proof theory rather than semantics. The inferences possible given a certain message tell us its informational content --- Shannon's work \cite{Shannon1949} and Abramsky's process-oriented approach \cite{Abramsky2008} provide good examples of this view.   
\end{enumerate}
These ideas are also conveyed by Adriaans and van~Benthem in \cite{ADRIAANS20083}, `Information is What Information Does'.  

While the work discussed above, and much more besides,\footnote{For example, see \cite{Kahre2002} for a discussion of Shannon's theory, some derivatives of it, and some other approaches  \cite{BarHillel1964,Chaitin1987,Cover1991,Stonier1997,Nielsen2000,Calude2002,Kohlas2003,vonBaeyer2004}. More detailed discussion of these is beyond our present scope.} has considered many aspects of the philosophy, logic, and algorithmics of information, and while this work exposes many challenging issues and has attracted much comment too various to discuss here, it has not addressed the challenge of establishing a unified logical theory. By contrast, through our approach, we seek to be able to address at least some of the concerns of each of, as indicated below. 

Our first point of departure, as discussed above, is Dretske's metaphysics of information and we propose an inferentialist variation of Dretske's analysis in which the requirement of truth is replaced by a requirement of inferability. Our second point of departure, also as discussed above, is van~Benthem and Martinez's categorization of formal theories of information. We have argued that van~Benthem and Martinez's categorization can be seen as being underpinned by Dretske's metaphysics. 
Our inferentialist view of van~Benthem and Martinez's categorization of formal theories of information --- in which information-as-correlation is based not on model-theoretic infons and situations, but rather on proof-theoretic inferons and situations, grounded in proof-theoretic semantics --- can evidently be seen as being underpinned by our inferentialist variation of Dretske's analysis. 

While the primary purpose of the present paper is to develop an inferentialist theory of informa\-tion-as-correlation, as described in terms of inferons and situations, we conjecture that our approach based on proof-theoretic semantics is also able to account for at least significant parts of information-as-range and information-as-code: 
\begin{enumerate}
\item \emph{Information-as-range}. Rather than considering a range of models or states that model facts, this understanding of information can be seen as employing a range of bases that support inferons (see Section~\ref{sec:LogicalBasis} for the meaning of this). 

We can expect initially to work with epistemic logics given by B-eS. We have established background work for intuitionistic modal logics (K through to S4) \cite{BuzokuPym-Modal2026}, classical modal logics (K through to S4 and S5) \cite{EckhardtPym-Modal2024,EckhardtPym2025S5}, and Public Announcement Logic (PAL) \cite{EckhardtPym2024PAL}. However, we will seek more general accounts of epistemic logics and agency --- see Section~\ref{sec:LogicalBasis} for some first thoughts on agency --- in the setting of the account of information given by proof-theoretic semantics. We can expect to find substantive  
connections with van Benthem's work on logical dynamics and information \cite{vanBLDII2011,vanBenthemTracking2016} and perhaps Floridi's \cite{Floridi2006LogicBeingInformed}, 
though this beyond our present scope.  
\item \emph{Information-as-correlation}. The inferentialist formulation of this through proof-theoretic semantics is the main topic of this paper. 
\item \emph{Information-as-code}. Our proof-theoretic approach to information is able to address some aspects of the information-as-code understanding, in which the inferences that are possible from a given message tell us its informational content. In our setting, such inferences are part of the inferential semantics ---  
characterized as support for an inferon with respect to a base (again, see Section~\ref{sec:LogicalBasis} for the meaning of this) --- rather than being accessible only through meta-theoretical reasoning. 
% \notemc{I think this is a very important point for the whole programme. Slight re-word? Some thing like ``In our setting, inferences about available information are not only possible via meta-theoretical reasoning about formal systems. Inferrable information is characterised semantically as support for an inferon with respect to a base.''} Done. 

Three worthwhile observations are the following: 
    \begin{itemize} 
    \item[--] For the first-order inferonic logic, we can work with the theory of Peano Arithmetic (PA) \cite{Gheorghiu2025-FirstOrder,gheorghiu2026classicallogicbivalance} and so represent numerical measure of information within our semantics. 
    \item[--] We can develop second-order inferonic logic (based on \cite{GP2025-P-tS-S-oL}), with the theory of second-order arithmetic (building on \cite{Gheorghiu2025-FirstOrder}), so allowing aspects of set theory and analysis to be deployed. 
    \item[--] We can explore a proof-relevant inferon logic, where proof-objects are associated with the clauses of the support relation (cf. BHK semantics, as discussed in connexion with proof-theoretic validity in \cite{Schroeder2007modelvsproof}; see also \cite{ArtemovNogina2005}). 
    \end{itemize}
\end{enumerate}
We discuss our intended approach to these further in Section~\ref{sec:discussion}. This is somewhat related to Prawitz's original conception of proof-theoretic validity, which we discuss briefly in Section~\ref{sec:P-tS}. 

Finally, a remark on agency: in each of the three categories, but especially in `range' and `correlation', notions of agency are relevant. In epistemic logics (e.g., \cite{Meyer1995,DEL2008,vB-LIA-2025}), this is commonplace notion and in our setting we should seek to explore, echoing Dretske, `knowledge and the flow of information'. In our present development, we introduce, in Section~\ref{sec:LogicalBasis}, a notion of agency through `action' or `agent'  modalities. Further development of these ideas, to which we return briefly in Section~\ref{sec:discussion}, is deferred to future work.  

Before we proceed to develop our inferentialist theory of inferons and situations, we give, in Section~\ref{sec:situations}, a brief overview of situation theory as presented by Barwise, Devlin, Etchemendy, Perry, Seligman, and others.

\section{Situations} 
\label{sec:situations}

%\subsection{Situation Theory and Situation Semantics}
%\label{subsec:SituationsDetail}

%\note{revisit this section after finishing discussion of our variation of Dretske}

% \note{states of affairs? too Kripke-ish?}

\emph{Situation theory} is a mathematical theory of information and information flow: see, among other sources,  \cite{BarwisePerry1981,BarwisePerry1983,Barwise1986,BarwiseTSIL1988,BarwiseEtchemendy1990,Devlin1991,BarwiseSeligman1997,Seligman2014}. 
The focus of situation theory is on making an applicable theory of how pieces of information are constructed and interpreted, including how they combine, how one piece of information may lead to another piece of information, and how they flow between situations. 

% \notemc{I think this is not necessary.}
%situation theory was developed in order to give a formal analysis of Barwise and Perry's situation semantics for natural language \cite{BarwisePerry1983}, itself motivated by a desire to give meaning to direct perception reports such as might be given by agent that observes some event in some situation \cite{Barwise1981scenes,BarwisePerry1981,BarwisePerry1983,kratzer2023}.
%\[
%    \note{EXAMPLE} 
%\]
%The notion of situation, and the associated notion of information, became central to the analysis. 

The techniques employed in the mathematical presentation of situation theory are given in the general style of model-theoretic semantics, using interpretation functions to give meaning to propositions. Before proceeding to develop our inferentialist account of information and information flow, grounded in proof-theoretic semantics, we give a brief sketch of the most basic concepts of situation theory. We do this in a simplified way, eliding much of the substantial complexity of the full set-up, an extensive  account of which is provided by Devlin \cite{Devlin1991}. In reality, there are several different situation theories, and we here try to give a flavour of some key concepts.

In situation theory, pieces of information are formalized using syntactic entities called \emph{infons}. The form of an (atomic) infon is, essentially, 
$
\llangle R , a_1  , \ldots a_n , \mathfrak{b} \rrangle
$, 
where $R$ is an $n$-ary multi-sorted relation %predicate 
symbol, and $a_1 , \ldots , a_n$ is a sequence of constants of the appropriate sorts to instantiate $R$, and $\mathfrak{b} \in \{0,1\}$ is a polarity. The idea is that this piece of syntax stands for `$a_1,\ldots , a_n$ do (do not) stand in relation $R$'. Compound infons are generated using connectives for conjunction, implication, quantifiers, and so on. 

Infons themselves are taken to be neither true nor false: 
the relevant question is whether any given situation \emph{supports} the infon (or not). To this end, a semantic judgement relation is used, of the form $s \vDash \sigma$, 
% \begin{equation*}
% %$
% s \vDash \sigma
% \end{equation*}
% %$
for $s$ a situation and $\sigma$ an infon. 
In applications, $s$ is often an \emph{abstract situation}, which is a set of infons. In this case, the support relation, $\vDash$, is just the `element-of' relation, $\in$.
%Sometimes employed is a notion of `real world' --- if $\vDash$ coincides for both the abstract and corresponding `real' situation, then the abstract situation is \emph{actual}. 
We remark that Mares \cite{Mares1996} and Restall \cite{Restall1996} consider how Routley--Meyer ternary-relation semantics for relevance logic can be used to give a model-theoretic logic of infons. 

The general set-up (e.g., \cite{Devlin1991}) uses a system of types. In particular, situations can be typed  using a typing judgement. This takes the form $s:T$, where $s$ is a situation of type $T$. In Barwise and Perry's early set-up \cite{BarwisePerry1983}, a type is (essentially) a set of infons. Situations then consist of a type together with a location. In Devlin's set-up \cite{Devlin1991}, one can abstract to form the types of situations supporting particular infons. For example, for a $0$-ary predicate (proposition) $\mathit{fire}$, a corresponding type of situations (in which there is fire) can be defined in which $\llangle \mathit{fire} , 1 \rrangle$. The notion of type employed in some parts of situation theory is rich and complicated and requires considerable mathematical machinery from classical model theory to realize set-theoretically. By contrast, the approach we develop in Section~\ref{sec:LogicalBasis} is grounded in proof theory.  
%For some additional complexity, return for a moment to the description of an infon $\llangle R , a_1  , \ldots a_n , \mathfrak{b} \rrangle$. What we referred to above as the `sort' of each $a_i$ is a types. Secondly, the individuals $a_i$ can themselves be situations and types. For each infon $\sigma$, there is a `situation type' $S=[\dot{s} \mid \dot{s} \vDash \sigma]$ such that $s:S$ if and only if $s \vDash \sigma$ for any situation $s$. 
%A major result in situation theory is that the types of situation theory can be realized by (non-well-founded) sets \cite{Aczel}. 

A \emph{constraint} is a binary relation on situation types, written $S \Rightarrow S'$, 
% \begin{equation*}
%         S \Rightarrow S'
% \end{equation*}
where $S$ and $S'$ are situation types. The idea of the constraint is, essentially, that any situation $s$ that has $s:S$ can be related to some  situation $s':S'$. In particular,  a constraint $S \Rightarrow S'$ gives us that if $S$ is the type of situations that support $\sigma$, and $S'$ is the type of situations that support $\sigma'$, then if there is $s \vDash \sigma$, then there is some $s' \vDash \sigma'$. For example, $S$ may be the type of all smokey situations and $S'$ may be the type of all firey situations, and a constraint might express `there is no smoke without fire'. More technical parts of the set-up allow closer control of how the resulting witness $s'$ is linked to the triggering $s$. Agents are said to be \emph{attuned} to constraints when they are aware of them, and (consciously or unconsciously) accept them.  

It's clear then that even in the most basic conception, as well as the commitments to ontology, situation theory also commits methodologically to model-theoretic semantics and, indeed, bivalent truth. Moreover, a constraint is a semantic 
%semantical 
construct designed to capture a  possible inference rule, while attunement means accepting the validity of that inference rule. 
%There are elements of realism underpinning the conceptions of situation, infon and constraint. Discussion in Devlin, Logic and Information, sections 2.6-2.8 pertinent. 

%\subsection{Classifications and Infomorphisms}
%\label{subsec:Classifications}

Situation theory requires the rich notion of type as described in order to support all the goals of situation semantics. Consequently, the realization of it in terms of sets (models) is somewhat complex. Later, simplified theories were produced to give a simpler treatment of information flow.
One such theory, involving sites and channels in information networks, and using graph structures and substructural logic, was given by Barwise, Gabbay, and Hartonas \cite{BarwiseGabbay1995}. 
We discuss a development inspired by this approach in Section~\ref{sec:flow}.

In a second simplified theory, Barwise and Seligman developed the theory of classifcations and infomorphisms, under the name `The Logic of Distributed Systems'. The standard introductory reference for this is the monograph \cite{BarwiseSeligman1997}. We return to the technical development of these ideas when we introduce our corresponding inferentialist notion in Section~\ref{sec:morphisms-flow}. 

% \note{Discussion of the embedded information flow principles from B\&S}.
Our development in the sequel is %will remain 
compatible with the four Principles of Information Flow \cite{BarwiseSeligman1997}. 
% (p8) `First principle of Information Flow: Information flow results fom regularities in a distributed system.'
Information flow requires some things to flow between, and this requires some conception of system that is distributed over various locations, and that has `regularities' describing the informational linkage of the distributed components. 
% (p27) `Second principle of infromation flow: Information flow crucially involves types and their particulars.'
Flow requires us to be able to identify the components of a system, say which are connected, and to reason about the (related, by regularity) informational properties of the various components. 
% (p35) `Third principle of Information Flow:  It is by virtue of regularities among connections that information about some components of a distributed system carries information about other components'
The level of detail of the model/analysis of the distributed system determines what are the regularities that can be identified. 
% (p43) `Fourth Principle of Information Flow: The regularities of a given distributed system are relative to its analysis in terms of information channels.'

\section{Logical Background: Proof-theoretic Semantics} 
\label{sec:P-tS} 

Recall that our purpose is to construct an inferentialist foundation for information, drawing upon situation theory, but grounded in proof-theoretic semantics rather than model-theoretic semantics. Accordingly, we now give a summary of the aspects of the theory of proof-theoretic semantics that we require. 

Proof-theoretic semantics is a counterpart for model-theoretic semantics in which the meanings of proofs and formul{\ae} are grounded in inference rather than truth (in the sense of 
Tarski). As we have remarked, can be seen as a logical realization of inferentialism \cite{Brandom1994, Brandom2000,Brandom2024}.  See Schroeder-Heister's \cite{Schroeder2007modelvsproof} for a brief explanation. 

We adopt closely the introduction to proof-theoretic semantics as  presented in \cite{PymRitterRobinson2024}. %add other similar? 
As in \cite{PymRitterRobinson2024}, our discussion, in Section~\ref{subsec:P-tV}, of proof-theoretic validity, which is concerned with the validity of proofs, is not strictly necessary for this paper. However, it provides valuable background for our discussion, in Section~\ref{subsec:B-eS}, of base-extension semantics, which is concerned with the validity of formul{\ae}. Also, the final paragraph of Section~\ref{subsec:P-tV} provides a brief discussion of the necessity for considering `base extensions'.

\subsection{Proof-theoretic Validity} \label{subsec:P-tV} 

Our technical starting point is the logical realization of an inferentialist theory of meaning known as \emph{proof-theoretic semantics} (P-tS). 
P-tS has its origins in the work of Prawitz \cite{Prawitz1971ideas,Prawitz1973towards,Prawitz2005,Prawitz2006meaning} and Dummett \cite{Dummett1975,Dummett1991}. Proof-theoretic semantics stands in contrast to model-theoretic semantics, in which the meaning of formul{\ae} and of proofs is given denotationally~\cite{Dummett1991}. 

We can usefully consider P-tS to have two main logical lines of development. The first, the earlier approach, is directly in the spirit of Prawitz and Dummett and is concerned with the validity of proofs. We refer to this as proof-theoretic validity (P-tV). 
The second line of development, \emph{base-extension semantics} (B-eS), is concerned with the validity of formul{\ae}. 

This paper will focus on the second approach. To better motivate and understand B-eS, however, we find it helpful to place this in the context of P-tV. Readers familiar with P-tV may want to skip to Section~\ref{subsec:B-eS} on B-eS.

% Proof-theoretic Validity
Proof-theoretic validity is an attempt to answer the following seemingly naive question: how can the validity of a proof (or derivation) be judged? 
Before looking at the answer provided by P-tV, there is one answer already given by model-theoretic semantics. Following Schroeder-Heister \cite{Schroeder2007modelvsproof}, we remark that this view lies in the tradition of Tarskian semantics, which works through the transmission of truth, as follows: given a model (i.e., some semantic structure) $\mathfrak{M}$, $\Gamma \models_\mathfrak{M} \phi$ if, and only if, whenever $\models_\mathfrak{M} \psi$ holds for all $\psi \in \Gamma$, $\models_\mathfrak{M} \phi$ holds as well.  
    % \[
    % \begin{array}{rcl}
        % \Gamma \models_\mathfrak{M} \phi & \mbox{iff} & 
        %     \mbox{if, for all $\psi \in \Gamma$, if 
        %     $\models_\mathfrak{M} \psi$, then 
        %     $\models_\mathfrak{M} \phi$}   
    % \end{array}
    % \]
The validity of a consequence is then given as follows: 
    \[
    \begin{array}{rcl}
        \Gamma \models \phi & \mbox{iff} & 
            \mbox{for all models $\mathfrak{M}$, $\Gamma \models_\mathfrak{M}\phi$}   
    \end{array}    
    \]
    In this setting, proofs can be understood denotationally; that is, as operations on the interpretation of formul{\ae} in models, by which consequences are yielded; here, for example, an interpretation of a proof of $\phi$ from proofs of those $\psi$ in $\Gamma$. However, such a model-theoretic definition of validity is unsatisfactory for answering the specific question one asks here about justification of validity: M-tS provides little insight into the structure of the formul{\ae} involved in a valid sequence; it also presumes an epistemic priority of models over proofs which is itself ungrounded.

    In proof-theoretic semantics (P-tS), one seeks to give meaning to propositions and to proofs purely in terms of inference. More specifically, meaning is given in terms of inferences in systems of pre-logical inference rules between atoms, called \emph{atomic rules} \cite{Prawitz1971ideas}. A set of such rules is called a \emph{base}. An atomic rule is one in which an atomic proposition is inferred from atomic propositions and which makes no reference to logical constants (connectives or other operators). For example, we have that if, say, Amber is a vixen, we can infer that it  is a fox, and that it is female. In the standard format of proof rules, we have:
        \[
            \dfrac{\text{Amber is a vixen}}{\text{Amber is a fox}}
            \qquad \mbox{\text{and}} \qquad  
            \dfrac{\text{Amber is a vixen}}{\text{Amber is female}}
        \]
    Similarly, if we have that Amber is a fox and that Amber is female, then we can infer that Amber is a vixen:
        \[
            \dfrac{\mbox{Amber is female} \qquad 
            \mbox{Amber is a fox}}{\mbox{Amber is a vixen}}  
        \]
    In general, atomic rules have forms such as 
    \[
    \dfrac{\qquad}{p} \qquad 
    \dfrac{p_1 \ldots p_k}{p} \qquad \mbox{and} \qquad 
        \dfrac{\begin{array}{ccc}
               [P_1] &        & [P_k] \\ 
                p_1  & \ldots &  p_k \\ 
               \end{array}}{p} 
    \]
    and more (e.g., \cite{PiechaPS-H2016-AtomicSystems,SandqvistWLD2022,Sandqvist2025}). The latter allows each of the $p_i$s to be proved from dischargeable hypotheses $P_i$s \cite{Prawitz1971ideas,sandqvist2015base,Piecha2019incompleteness}.\footnote{The choice of the form of atomic rules in bases has a profound effect on the strength of the semantics that is obtained \cite{PiechaPS-H2016-AtomicSystems,SandqvistWLD2022,Sandqvist2025}.} Again, we emphasize that such atomic rules are \emph{pre-logical} --- they contain only atoms and do not refer to any logical constant. The use of the discharging of hypotheses in atomic rules is discussed in \cite{sandqvist2015base}.  

    The propositional bases we have described can be generalized to bases of rules for atomic predicates \cite{Gheorghiu2025-FirstOrder,GP2025-P-tS-S-oL}. Such predicate bases remain pre-logical in that they make no reference to logical connectives. Predicate bases provide a key tool for our project of providing an inferential account of information using the logical tools of proof-theoretic semantics. 

    As we have seen, situation theory's account of information is grounded in the idea of infon of the form $\llangle R , a_1  , \ldots a_n , \mathfrak{b} \rrangle$ in which $R$ is relation, the intended interpretation of a corresponding predicate symbol. This set-up is realized in terms of model-theoretic semantics, which provides the foundation for the development of the logic of infons: infons and situations are interpreted truth-functionally. 

    In the manner of proof-theoretic semantics, predicate bases provide an inferential alternative to this model-theoretic interpretation: we develop, in Section~\ref{sec:LogicalBasis}, an inferential counterpart to the concept of the 
    infon, called `inferon', in which predicate bases give meaning to relational assertions.  

    The starting point for this development will be bases defined over atoms of the form $\langle p , \mathfrak{b} \rangle$, 
    where $p$ is a proposition or predicate and $\mathfrak{b}$ is a polarity. This will be explained in Section~\ref{sec:LogicalBasis}

    P-tS in its modern form can be considered to begin with Appendix~A of \cite{Prawitz1971ideas}, `Validity of Derivations', in which Prawitz discusses the validity of derivations in terms of bases. We refer to  this topic as proof-theoretic validity (P-tV). In the remainder of this section, we give a very brief summary of P-tV in the sense of Prawitz \cite{Prawitz1971ideas,Schroeder2006validity}. While this topic is an important part of the background to the present paper, it may be omitted by readers who wish to progress quickly to our core concerns. Note, however, that our final paragraph stresses an important point.      

   While there are a few variations of P-tV, we consider here a slightly generalized expression of the original presentation by Prawitz  \cite{Prawitz1971ideas}, and elegantly presented and discussed by Schroeder-Heister~\cite{Schroeder2006validity,Schroeder2007modelvsproof}.
    The validity of proofs is reduced to so-called $S$-validity, where $S$ is a base (i.e., a set of atomic rules), using the following auxiliary concepts:
    \begin{itemize}[label = --]
        \item[--] a set of \emph{canonical derivations}, ${\bf C}$, typically derivations whose last step uses an introduction rule,
        \item[--] a justification $\justification{J}$ that maps  proof-structures to a proof-structures.
    \end{itemize}

    Then, $\mathcal{B}$-validity can be defined as follows:
    \begin{itemize}[label=--]
        \item[--] All derivations formed using atomic rules from $\mathcal{B}$ are $\mathcal{B}$-valid; 
        \item[--] A closed canonical derivation --- that is, 
        a canonical derivation that has no open assumptions --- is $\mathcal{B}$-valid; 
        \item[--] A closed non-canonical derivation is $\mathcal{B}$-valid if $\justification{J}$ reduces it to a canonical one; 
        \item[--] An open derivation $\derivation{D} \colon \psi_1, \dots, \psi_k \seq \varphi$ (i.e., $\derivation{D}$ has open assumptions among $\psi_1, \dots, \psi_k$ and conclusion $\varphi$) is $\mathcal{B}$-valid if, for any $\mathcal{B}' \supseteq \mathcal{B}$ and $\mathcal{B}'$-valid closed derivations $\derivation{D}_i$s, when we close each open assumption $\psi_i$ with $\derivation{D}_i$, the resulting derivation is $\mathcal{B}'$-valid.
    \end{itemize}
    
    Validity relative to a base $\mathcal{B}$ and a justification  ${\bf J}$, $\langle {\bf J} , \mathcal{B} \rangle$-validity is now defined as follows: a proof-structure $\Phi$ is $\langle {\bf J} , \mathcal{B} \rangle$-valid --- that is, represents a proof --- if either 
    $\Phi \in {\bf C}$ 
    or if ${\bf J}$ can be applied to $\Phi$ to yield an element 
    of ${\bf C}$. This definition gives rise to a 
    consequence relation as follows \cite{Schroeder2007modelvsproof}: 
    \begin{quote}
        Let $\langle \mathcal{J} , \mathcal{B} \rangle \Vdash \phi$  denote that $\mathcal{J}$ generates a valid closed proof of $\phi$ relative to $\mathcal{B}$. Then we obtain the following proof-theoretic notion of logical consequence: 
        \[
        \begin{array}{rcl}
            \mbox{$\phi_1 , \ldots , \phi_k \Vdash \phi$} & \mbox{iff} & \mbox{there is a $\mathcal{J}$ s.t., for every $\mathcal{B}$ and all $\mathcal{J}_1$ , \ldots , $\mathcal{J}_k$,} \\ 
            & & \mbox{if $\langle \mathcal{J}_1 , \mathcal{B} \rangle \Vdash \phi_1$, \ldots ,  $\langle \mathcal{J}_k , \mathcal{B} \rangle \Vdash \phi_k$, then 
            $\langle \mathcal{J} , \mathcal{B} \rangle \Vdash  \phi$}    
        \end{array} 
        \]
     \end{quote}

    If ${\bf J}$ specifies a reduction system --- as with normalization in natural deduction or BHK-semantics, as described in \cite{Prawitz1971ideas,Schroeder2007modelvsproof} --- $\langle {\bf J} , \mathcal{B} \rangle$-valid proofs can be defined inductively on their component structure. 
    
    Other choices of justifcation are, of course, possible and P-tV remains an active area of research. 

    Finally, we consider the role of arbitrary extensions $\mathcal{B}'$ of the current atomic system $\mathcal{B}$ in the $\mathcal{B}$-validity condition for open derivations. This is to prevent open derivations from being $\mathcal{B}$-valid simply due to that some of its open assumptions be  \emph{un}derivable in $\mathcal{B}$. It is helpful to read a base $\mathcal{B}$ as some `epistemic stage' (cf. Kripke's semantics of implication): an open derivation is $\mathcal{B}$-valid if at any future point one can close the open assumption with closed derivations, then the resulting closed derivation is also valid. Such a potential completion at a future point justifies the need for extensions of the current base $\mathcal{B}$.\footnote{In fact, Prawitz \cite{Prawitz1971ideas} points out that the extensions considered can be restricted to those required by ${\bf J}$ for the construction of $\psi$.} 

\subsection{Base-extension Semantics} \label{subsec:B-eS}

While P-tV deals with the validity of proofs, it does so without considering directly the validity of formul{\ae} (though, of course, validity of formul{\ae} could be indirectly derived by saying $\Gamma \seq \varphi$ is valid precisely when there exists some valid derivation $\mathcal{D} : \Gamma \seq \phi$).
The second line of development, which we call \emph{base-extension semantics} (B-eS), is concerned with the validity of formul{\ae} (and sequents).     
Sandqvist \cite{sandqvist2015base} has given an elegant B-eS for intuitionistic propositional logic (IPL). This analysis demonstrates very clearly the basic principles of B-eS. Strong relationships between P-tV and B-eS in the setting of IPL have been discussed in \cite{GheorghiuPym2025}.   

The basic idea of such inductive definition of validity of formul{\ae} can be articulated conveniently by comparison with the model-theoretic definition of the validity of formul{\ae}. In the Kripke (and related) semantics of intuitionistic propositional logic based on ordered worlds, meaning is defined inductively as follows: the truth of an atomic formula at a world is determined by its interpretation in a model; that is, the atom is true at the world just in case the world is an element of the valuation of the atom. The meaning of the remaining connectives is then defined inductively, with the meaning of implication formul{\ae} requiring, analogously to the requirement for base-extensions described above, judgements relative to worlds higher in the ordering. In fact, looking at the current world instead will incur the vacuous satisfaction problem similar to that in P-tV as mentioned at the end of Section~\ref{subsec:P-tV}: $\varphi \supset \psi$ could be true at a world $w$ simply because $\varphi$ is false at $w$ (but not necessarily false at all worlds greater than $w$).

By contrast, base-extension semantics gives the meaning ($\Vdash_\base{B}$) of atomic formul{\ae} relative to a base $\base{B}$ in terms of provability ($\vdash_\base{B}$) in $\base{B}$, where the underlying set of atoms is assumed to be denumerably infinite:
% \footnote{We move to the notation $\base{B}$, etc., for bases to stress the move from P-tV to B-eS.} 
    \[
        \Vdash_\base{B} p \quad\mbox{iff}\quad 
        \vdash_\base{B} p
    \]
The meanings of the connectives are then given inductively, as in Kripke's semantics.

Atomic derivability in a base $\baseB$ of rules of the form given in Figure~\ref{fig:base} is defined using the combinators (Ref) and (App), as follows: 
\[
\begin{array}{rl}
 \mbox{(Ref)} & \mbox{$P , p \vdash_\base{B} p$} \\ 
(\mbox{App}_\atRule{R}) & \mbox{if $\atRule{R} \in \base{B}$ and, for all $i \in [1,n]$, where $n \geq 0$, $Q , P_i \vdash_\base{B} q_i$, then $Q \vdash_\base{B} r$} 
\end{array}
\]
Note that the rule $\atRule{R}$ encompasses the axiom and 
hypothesis-free rules ($n=0$ and $P_i$s empty, respectively). For brevity, the rule $\atRule{R}$ in Figure~\ref{fig:base} is also written inline as
\[
((P_1 \Rightarrow q_1) , \ldots , (P_n \Rightarrow q_n)) \Rightarrow r)
\]

\begin{figure}[t]
\hrule 
\[
 \dfrac{\qquad}{p} \qquad 
    \dfrac{p_1 \ldots p_n}{p} \qquad \mbox{and} \qquad 
\frac{\begin{array}{ccc}
        [P_1] &        & [P_n] \\
        q_1   & \ldots & q_n  
      \end{array}
}{r} 
\]
\hrule 
\caption{Base rules: atomic, level 1, and level 2, 
            respectively} 
            \label{fig:base}
\end{figure}
\noindent There is a substitution (cut) operation on bases that maps derivations $P \vdash_\baseB p$ and $Q , p \vdash_\baseB q$
to a derivation $P , Q \vdash_\baseB q$ \cite{sandqvist2015base}.

Before proceeding to the remainder of the discussion that we have sketched, we first recall and fix some notational conventions that we shall need throughout this section. These are compatible with Sandqvist \cite{sandqvist2015base}, Gheorghiu \cite{Gheorghiu2025-FirstOrder}, and Gheorghiu and Pym \cite{GheorghiuPym2025} and are summarized in Figure~\ref{fig:notation}. 

\begin{figure}[ht]
\hrule 
\vspace{1mm}
\begin{itemize}
\item[--] $p$, $q$, $r$, ... denote propositional letters 
%\item[--] $P$, $Q$, $R$, ... denote sets of atomic formulae 
\item[--] $\mathfrak{b}$, $\mathfrak{c}$, 
    $\mathfrak{d}$, ... denote polarities 
\item[--] $x$, $y$, $z$, ... denote variables
\item[--] $a$, $b$, $c$, ... denote constants 
\item[--] $f$, $g$, $h$, ... denote function symbols 
\item[--] $t$, $s$, $r$, ... denote terms
\item[--] $P$, $Q$, $R$, ... denote either propositional letters or atomic predicates
\item[--] $\phi$, $\psi$, $\chi$, ... denote   
    formul{\ae}
\item[--] $\mathbb{P}$, $\mathbb{Q}$, $\mathbb{R}$, ... denote (possibly empty, possibly countably infinite) sets of atoms --- 
propositional letters and atomic predicates (note this is a minor though wholly compatible variation on Sandqvist's notational scheme, as used in Section~\ref{subsec:B-eS})
\item[--] $\Gamma$, $\Delta$, $\Theta$, ... denote (possibly empty, possibly infinite) sets of formul{\ae}
\item[--] $\base{B}$, $\base{C}$, $\base{D}$, ... denote atomic systems
\item[--] $\mathfrak{P}$, $\mathfrak{Q}$, $\mathfrak{R}$, ... denote (possible empty, possibly infinite) sets of inferonic atoms (see Definition \ref{def:inf-atom})
\end{itemize} 
The sets of all variables, constants, terms, propositional atoms, and formul{\ae} are $V$, $C$, $T$, $A$, and $F$, respectively; the subsets containing only closed elements (where appropriate) are denoted {\rm Cl}(-). We write $FV$ to denote the function that takes a term, atom, or formula and returns the set of its free variables. We write quantified formul{\ae} as $\forall{x}.\phi$ and $\exists{x}.\phi$, etc., and let $[t/x]$ to denote a substitution
function that replaces all free occurrences of $x$ by $t$. 
\vspace{1mm}
\hrule
\caption{Notational conventions}
\label{fig:notation}
\end{figure}

We use $\bot$, $\wedge$, $\vee$, $\supset$, $\forall$, $\exists$ as the logical signs. We do not regard $\bot$ 
as an atomic formula; intuitively, following Dummett \cite{Dummett1991}, it has logical structure that renders it logically/semantically complex even though it is indeed syntactically atomic. In this intuitionistic setting, we elide $\top$. 

We write `Atoms' for the (denumerably infinite) set of propositional and predicate atomic formul{\ae} in a specified context.  Then the propositional and first-order formul{\ae} are given by the usual grammar: 
\[
    \begin{array}{rcl}
    \phi & ::= & P \mid \bot \mid \phi \wedge \phi \mid \phi \vee \phi \mid \phi \supset \phi \mid \forall{x}.\phi \mid \exists{x}.\phi 
    \end{array}
\]
where atomic predicates defined over terms $f(t_1,\ldots,t_k)$ in the usual way. Terms and predicates are \emph{closed} if they have no free variables. 

The B-eS clause for disjunction can be given in the form that is familiar from Kripke semantics, 
\[
 \Vdash_\mathcal{B} \phi_1 \vee \phi_2 
        \mbox{ iff }  \mbox{$\Vdash_\mathcal{B} \phi_1$ or $\Vdash_\mathcal{B} \phi_2$}
\]
but such a choice leads to a failure of completeness \cite{Piecha2016completeness,Piecha2019incompleteness,sandqvist2015base}.

\begin{figure}[ht]
\hrule
\[
    \begin{array}{lrcl} 
       (\mbox{At}) & \Vdash_\mathcal{B} P   & \mbox{iff} & 
       \mbox{$\vdash_{\mathcal{B}} P$ for closed $P$} \\ 
       (\wedge) & \Vdash_\mathcal{B} {\phi}_1 \wedge {\phi}_2 & \mbox{iff} & \mbox{$\Vdash_\mathcal{B} {\phi}_1$ and $\Vdash_\mathcal{B} {\phi}_2$} \\ 
       (\vee) & \Vdash_\mathcal{B} {\phi}_1 \vee {\phi}_2 
       & \mbox{iff} & \mbox{for every closed 
       $P$ and every 
       $\mathcal{C} \supseteq \mathcal{B}$,} \\   
       & & & \mbox{if $\phi_1  \Vdash_\mathcal{C} P$ and $\phi_2 \Vdash_\mathcal{C} P$, then $\Vdash_\mathcal{C} P$} \\
       (\supset) & \Vdash_\mathcal{B} {\phi}_1 \supset {\phi}_2 & \mbox{iff} & \mbox{${\phi}_1 \Vdash_\mathcal{B} {\phi}_2$} \\ 
       (\mbox{Inf}) & \mbox{for $\Theta \neq \emptyset$, 
        $\Theta \Vdash_\mathcal{B} {\phi} $} & \mbox{iff} & 
        \mbox{for every $\mathcal{C} \supseteq \mathcal{B}$, if $\Vdash_\mathcal{C} {\psi}$, for every ${\psi} \in \Theta$,} \\ 
        & & & \mbox{then $\Vdash_\mathcal{C} {\phi}$} \\ 
        (\bot) & \Vdash_\mathcal{B} \bot  
        & \mbox{iff} & \mbox{for all closed $P$, $\Vdash_\mathcal{B} P$} 
    \end{array}
\]

\dotfill 
\[
\begin{array}{lrcl}   
(\forall) &  \Vdash_\mathcal{B} \forall{x}.\phi & \mbox{iff} & 
     \mbox{$\Vdash_\mathcal{B} \phi[t/x]$ for all $t \in {\rm Cl}({T})$} \\
(\exists) &  \Vdash_\mathcal{B} \exists{x}.\phi & \mbox{iff} & 
  \mbox{for any $\mathcal{C} \supseteq \mathcal{B}$ and 
  any closed $P$, if} \\ 
   & & & \mbox{$\phi[t/x] \Vdash_\mathcal{C} P$, for any $t \in {\rm Cl}({T})$, then $\Vdash_\mathcal{C} P$} 
\end{array}
\] 
  \dotfill 
\[
\begin{array}{lrcl}
(\mbox{Validity}) & \Gamma \Vdash \phi & \mbox{iff} & 
    \mbox{for all $\baseB$, $\Gamma \Vdash_\baseB \phi$}
\end{array}
\]
\hrule 
\caption{Sandqvist's B-eS for intuitionistic propositional logic and Gheorghiu's extension of it  to first-order} 
\label{fig:support1}
\end{figure} 

While  Figure~\ref{fig:support1}, which gives the inductively defined semantics, looks similar to the Kripke semantics of IPL, there are some key points to note: 
\begin{itemize}
% [leftmargin=6mm,label=--]
% \setlength\itemsep{-0.5mm}
% [leftmargin=6mm,label=--]
\setlength\itemsep{-0.5mm}
\item[--] The use of base-extension in the (Inf) clause reflects Prawitz's argument for its necessity in giving meaning to implication, the same reasons underlying the consideration of extensions in P-tV (see the last paragraph of Section~\ref{subsec:P-tV}).
\item[--] Base-extension is transmitted to the implication connective $\supset$ through its clause $(\supset)$. 
\item[--] The form of the clause for $\bot$ may seem a little odd at first sight, but it gives the usual intuitionistic introduction and elimination rules for
negation \cite{Prawitz2005}, defined as $\neg \phi = \phi \supset \bot$, as well as Ex Falso Quodlibet \cite{Dummett1991,sandqvist2015base,Sandqvist2025}. Note that the set of atoms ($p$s) is assumed to be denumerably infinite. 
\item[--] The form of the clause for disjunction is critical: it is this form that allows the completeness theorem (q.v., below) --- see, for example,  \cite{sandqvist2015base,PymRitterRobinson2024,GheorghiuPym2025} for explanations of this, which are beyond the scope of this brief introduction. In fact, the typical way of defining that a disjunction is true iff at least one of the disjuncts is true results in incompleteness~\cite{Piecha2019incompleteness}. 
\end{itemize} 
Given all this, \emph{validity} (denoted as $\Vdash$) is defined as support in all bases: $\Gamma \Vdash \phi$ iff, for all $\base{B}$, $\Gamma \Vdash_\base{B} \phi$. 
Note that we have transitivity for $\Vdash$ as given in Figure~\ref{fig:support1}: if $\Gamma \Vdash \phi$ and $\Gamma , \phi \Vdash \psi$, then $\Gamma \Vdash \psi$. 

We write $\Gamma \vdash \phi$ to denote that $\phi$ is provable from $\Gamma$ in the natural deduction systems NJ (cf. \cite{sandqvist2015base}). For reference, NJ, in sequential form, is given in Figure~\ref{fig:nj}. 

\begin{figure}[ht]
\hrule 
\vspace{2mm}
\[
\begin{array}{c@{\quad\quad}c}
\infer[(Ax)]{\Gamma , \phi \vdash \phi}{} 
    & \infer[\bot{E}]{\Gamma \vdash \phi}{\Gamma \vdash \bot} \\ [4pt] 
\infer[\wedge{I}]{\Gamma \vdash \phi \wedge \psi}{\Gamma \vdash \phi & \Gamma \vdash \psi}
    &  \infer[\wedge{E}]{\Gamma \vdash \phi}{\Gamma \vdash \phi \wedge \psi} \quad 
            \infer[\wedge{E}]{\Gamma \vdash \psi}{\Gamma \vdash \phi \wedge \psi} \\ [4pt] 
\infer[\vee{I}]{\Gamma \vdash \phi \vee \psi}{\Gamma \vdash \phi} \quad 
            \infer[\vee{I}]{\Gamma \vdash \phi \vee \psi}{\Gamma \vdash \psi}     
    &  \infer[\vee{E}]{\Gamma \vdash \chi}
            {\Gamma \vdash \phi \vee \psi & \Gamma, \phi \vdash \chi &
              \Gamma, \psi \vdash \chi} \\ [4pt]
\infer[\supset{I}]{\Gamma \vdash \phi \supset \psi}{\Gamma , \phi \vdash \psi}
    & \infer[\supset{E}]{\Gamma \vdash \psi}{\Gamma \vdash \phi & \Gamma \vdash \phi \supset \psi} \\ [4pt]  
\end{array}
\]
\dotfill
\[ 
\begin{array}{c@{\quad\quad}c}
\dfrac{\Gamma \vdash \phi(x)}{\Gamma \vdash \forall{x}.\phi(x)}\quad\mbox{($x$ not free in $\Gamma$)}\quad{\forall{I}} & 
    \dfrac{\Gamma \vdash \forall{x}.\phi(x)}{\Gamma \vdash \phi[t/x]}\quad \mbox{($t$ free for $x$ in $\phi$)}\quad{\forall{E}} \\ 
    & \\ 
\dfrac{\Gamma \vdash \phi[t/x]}{\Gamma \vdash \exists{x}.\phi(x)}\quad\mbox{($t$ free for $x$ in $\phi$)}\quad{\exists{I}} & 
    \dfrac{\Gamma \vdash \exists{x}.\phi(x) 
    \quad \Gamma , \phi \vdash \psi}{\Gamma \vdash \psi}\quad\mbox{($x$ not free in $\Gamma$ or $\psi$})\quad{\exists{E}} \\ [4pt]   
\end{array}
\]

\vspace{1mm}
\hrule 
\caption{The calculus NJ in sequential form (eliding $\top$)   \cite{Prawitz2005,TroelstraSchwichtenberg2000,Girard1989}
% with the axiom (\emph{Inferon}) for the theory of inferons.
}
% \note{Prawitz's form is additive in the metatheory (constant context $\Gamma$ throughout. Could we use something multiplicative instead eg Dummett Elements of Intuitionism, p 123.?}
% \note{Was following Sandqvist, so that we have a fixed reference point for the metatheory ... .}
\label{fig:nj}
\end{figure}
Sandqvist \cite{sandqvist2015base} establishes the soundness and completeness of the natural deduction calculus NJ \cite{Gentzen1934,Prawitz2005} with respect to this semantics. Gheorghiu \cite{Gheorghiu2025-FirstOrder} extends soundness and completeness to first-order (classical and) intuitionistic logic. In the notation of Figure~\ref{fig:nj}, we obtain soundness and 
completeness for validity       . 

\begin{theorem}[Sandqvist \cite{sandqvist2015base} and Gheorghiu \cite{Gheorghiu2025-FirstOrder}] 
% For the pure predicate logic without \emph{Inferon}, we have 
$\Gamma \vdash \phi$ iff $\Gamma \Vdash \phi$. \fillBox
\end{theorem}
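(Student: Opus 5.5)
The proof splits into soundness ($\Gamma \vdash \phi$ implies $\Gamma \Vdash \phi$) and completeness (the converse), following Sandqvist's strategy and Gheorghiu's extension of it to the quantifiers.

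\emph{Soundness.} We proceed by induction on NJ derivations (Figure~\ref{fig:nj}), showing that each rule preserves $\Gamma \Vdash_\baseB \phi$ for every base $\baseB$. Two preliminary lemmas are needed first.
\begin{itemize}
\item[--] \emph{Monotonicity:} if $\Vdash_\baseB \phi$ and $\baseC \supseteq \baseB$, then $\Vdash_\baseC \phi$. This is proved by induction on $\phi$; the atomic case holds because derivability in $\baseB$ persists in $\baseC$.
\item[--] \emph{Generalized elimination:} if $\Vdash_\baseB \phi$, then for every $\baseC \supseteq \baseB$ and every closed atom $P$, whenever $\phi$'s relevant components yield $P$ over $\baseC$, we get $\Vdash_\baseC P$. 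Concretely, we prove that support of $\bot$, $\vee$ and $\exists$ extends from atoms $P$ to arbitrary formul{\ae} $\chi$, by induction on $\chi$.
\end{itemize}
The second lemma is what makes $\vee E$, $\exists E$ and $\bot E$ sound, since their clauses quantify only over atoms. The introduction rules then follow directly from the clauses. $\forall I$ and $\exists I$ use a substitution lemma: support of an open formula is read as support of all its closed instances, and the eigenvariable condition lets $x$ be instantiated by any closed term.

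\emph{Completeness.} This is the main obstacle, and we follow Sandqvist's atomic-simulation construction. Given $\Gamma \Vdash \phi$, we take the set $\Xi$ of all subformul{\ae} of $\Gamma\cup\{\phi\}$ (in the first-order case, all closed instances over an enlarged term stock). We assign to each $\chi \in \Xi$ a fresh atom $\chi^\flat$, with atoms mapped to themselves, and build a base $\baseMill$ whose rules mimic the NJ introduction and elimination rules on the flattened atoms. For example, $\chi_1^\flat, \chi_2^\flat / (\chi_1\wedge\chi_2)^\flat$ and, for disjunction elimination, the level-2 rule with discharged $\chi_1^\flat$ and $\chi_2^\flat$ concluding an arbitrary atom $p$.

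The key lemma, proved by induction on $\chi$, states that for every $\baseC \supseteq \baseMill$ and every set of atoms $\mathbb{P}$, we have $\mathbb{P} \Vdash_\baseC \chi$ iff $\mathbb{P} \vdash_\baseC \chi^\flat$. The delicate cases are $\vee$, $\exists$ and $\bot$, where the atomic-conclusion form of the clauses in Figure~\ref{fig:support1} lines up exactly with the level-2 elimination rules. For $\forall$ and $\exists$ we must ensure that eigenvariables correspond to fresh constants, and that there are enough closed terms, which is why the term stock is enlarged.

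With the lemma in hand, $\Gamma \Vdash_\baseMill \phi$ gives $\Gamma^\flat \vdash_\baseMill \phi^\flat$. Finally, we translate the resulting atomic derivation back into NJ by replacing each $\chi^\flat$ with $\chi$. Each base rule then becomes an instance of an NJ rule, which yields $\Gamma \vdash \phi$.
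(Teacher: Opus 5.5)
Your soundness half and the propositional part of completeness are essentially Sandqvist's argument, which is all the paper relies on here. The paper gives no proof of its own, only the citation. Its later sketch of Sandqvist's method (simulation base $\mathcal{N}$, $(\dagger)$, $(\ddagger)$, and the atomic lemma $\mathbb{P}\vdash_\baseB p$ iff $\mathbb{P}\Vdash_\baseB p$) is what you reproduce; your key lemma just fuses $(\dagger)$ with the atomic lemma, which is fine. The gap is in the first-order case. You dispose of it in one sentence about fresh constants and an enlarged term stock, whereas the paper leaves it entirely to Gheorghiu, noting that his treatment goes through a Hilbert-type system rather than NJ.

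Concretely, your final step fails: it is not true that every rule of $\baseMill$ back-translates to an instance of an NJ rule, because of the rules simulating $\forall I$ and $\exists E$.
\begin{itemize}
\item[--] An atomic rule cannot carry an eigenvariable side condition. So the natural rule $\chi[c/x]^\flat \Rightarrow (\forall x.\chi)^\flat$, with $c$ a designated fresh constant, can be applied under open or discharged hypotheses that contain $c$.
\item[--] You cannot keep $c$ out of $\Xi$. The $(\forall)$ clause ranges over all closed terms, so the $\forall$-case of your key lemma needs $\chi[t/x]\in\Xi$ for every closed $t$, including $t=c$.
\item[--] Counterexample: take $\phi=\exists y.(Q(y)\supset\forall x.Q(x))$ and any $c$, which is automatically fresh for $\phi$. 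In $\baseMill$, assume $Q(c)$ and apply the $\forall$-rule to get $(\forall x.Q(x))^\flat$. Discharge with the $\supset$-introduction rule for the instance $Q(c)\supset\forall x.Q(x)\in\Xi$, then apply the $\exists$-introduction rule. This gives $\vdash_\baseMill\phi^\flat$, although $\phi$ is not intuitionistically provable.
\item[--] The fixed-constant $\exists$-elimination rule misbehaves in the same way.
\end{itemize}

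Note that your key lemma does go through with these rules. In both quantifier cases, one direction uses only the designated constant and the other uses all instances. So the failure is entirely in the back-translation $(\ddagger)$, not where you locate the delicacy. Enlarging the term stock does not help. The obvious eigenvariable-free alternative, an infinitary $\forall$-rule with premises $\chi[t/x]^\flat$ for all $t$, is not a legitimate base rule and has no NJ counterpart. Completeness for the quantifiers therefore needs a genuinely new ingredient that controls eigenvariables at the level of atomic derivations, or a different target calculus as in Gheorghiu's Hilbert-style route. A rule-by-rule back-translation will not do.
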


It is base-extension semantics that provides the logical basis for the use of bases to provide the inferential alternative to infons that we call inferons. In this setting, developed in Section~\ref{sec:LogicalBasis}, we need to define base rules of atoms of the form $\langle p , \mathfrak{b} \rangle$, where $p$ is a propositional or predicate atom and $\mathfrak{b}$ is a boolean polarity.\footnote{For now, we take polarities to be Boolean, but we conjecture that other choices are of substantive interest.}

\section{An Inferentialist Logical Basis for Information} \label{sec:LogicalBasis}

With the background ideas and techniques in place, we now employ base-extension semantics to establish an inferentialist logic of information. Our set-up follows the base-extension semantics for intuitionistic predicate logic as described in Section~\ref{subsec:B-eS}. However, the key distinction is that atomic predicates are replaced by \emph{inferons}, our inferentialist version of infons.  

It should be noted that our choice to ground our account of information on an intuitionistic logic of inferons, building on the B-eS for intuitionistic logic \cite{sandqvist2015base,Gheorghiu2025-FirstOrder} as described above, is a wholly pragmatic one. It does not represent a commitment or suggestion that intuitionistic logic as presented. Rather, it is the simplest viable point of departure. 

Borrowing some notation from situation theory, we take \emph{inferons} to be of the form 
\[
    \inferon{R(t_1,...,t_n)}{B}{b}{} 
\]
This stands in contrast to form of \emph{infons} 
taken by Barwise, Devlin et al. (see \cite{Devlin1991}), namely 
\[
    \inferatom{R(t_1,...,t_n)}{b}{} 
\] 
The latter gains meaning through the assumption of the availability of a notion of truth for predicates $R(t_1,...,t_n)$, 
which can be obtained only from a presumed model within which predicates can be interpreted.\footnote{Note that while, following situation theory, we retain the presence of a polarity $\mathfrak{b}$ within the structure of an inferon, it is not clearly that its presence is strictly essential, at least theoretically. However, their presences seems to allow us to avoid, to begin with, some delicate logical issues around negation. Furthermore, as we shall see in Section~\ref{sec:flow}, for example, polarities are useful in the context of modelling systems inferonically. Moreover, both theoretically and for modelling purposes, the question of the possible value of richer types of polarity is intriguing.}   

Our position is that we seek to replace such a presumption with an account of meaning that is inferential. In the present
setting, we seek to achieve this through proof-theoretic semantics and, in particular, through base-extension semantics. In this view, as we have explained in Section~\ref{subsec:B-eS}, the meaning of propositions --- specifically, their validity --- is given by the idea of derivability in a base of atomic rules. So, an atomic predicate $R(t_1,...,t_n)$ is valid in a base $\mathcal{B}$ iff it
is derivable in $\mathcal{B}$: 
\[
    \Vdash_\baseB R(t_1,...,t_n)  \quad\mbox{iff}\quad
        \vdash_\baseB R(t_1,...,t_n)
\] 
So, the ambient, extensional notion of truth presumed in the infon \cite{Devlin1991} 
$\inferatom{R(t_1,...,t_n)}{b}{}$ is replaced by derivability in a base.  

At this point, we have two options:
\begin{itemize}
\item[--] we can presume a global base to be used to give meaning 
    to all atomic predicates, which is analogous to the use of an implicit model to give meaning to infons, or  
\item[--] we can associate a local base with an atomic predicate  
    and so obtain our notion of inferon.  
\end{itemize}
We choose the latter. Why? The former is of course possible, but, we would argue, simply not as useful or as convenient as an account of information. 
%(I'm taking the role of polarities as given for the purpose of this discussion.)

It's really a design choice that, from the inferentialist  perspective, it is natural for an assertion to be accompanied by an account --- albeit a context-sensitive one --- of from where it is intended to derive its meaning. Indeed, it might be argued that the absence of such an account is a weakness of the set-up of
Barwise et al. \cite{Devlin1991}. While it is in principle possible to address this by associating with each assertion
a model from which it acquires its meaning, we would argue that that would represent a large, perhaps
unmanageable, overhead in modelling information-theoretic scenarios.
% Indeed, it might be argued that the absence of such an account is a weakness of the set-up of Barwise et al. \cite{Devlin1991} that cannot be addressed there without associating with each assertion a model from which it acquires its meaning. While this is possible in principle, we would argue that is represents a large, perhaps unmanageable, overhead in modelling information-theoretic scenarios. 
By contrast, the inferentialist set-up can fix this weakness simply by associating a base, which, for practical modelling purposes can be considered essentially finite.

Furthermore, with this set-up, we can now naturally ask how the meaning of a predicate in this setting is affected by its context, as represented by a base $\baseC$; that is,
\[
    \Vdash_\baseC \inferon{P}{B}{b}{}  \quad\mbox{iff}\quad \vdash_{\mathcal{C} \,\cup\, \mathcal{B}} \inferatom{P}{b}{} 
\]
in which the local and contextual bases are combined. Indeed, we would stress that inferons have no inherent meaning. In Dretske's terms, they are `objective commodities' and meaning is associated with them only through their use in inferential contexts.  The force of this is particularly clear if $\inferatom{P}{b}{}$
is not derivable in $\baseB$, but is derivable with the addition of $\baseC$. 

Moreover, this set-up is naturally aligned with a key concept in the metaphysics of information as we see it: that of agency. Agents --- the literature is vast, we make no attempt here to review it, though \cite{vB-LIA-2025} (see also \cite{vanBLDII2011}) provides a comprehensive, elegant, and insightful account of the state of the art  --- can naturally be taken to have inferential capacities. In our setting, we associate with an agent $a$ set of bases ${\bf Bases}(a)$ representing its inferential capacities in various contexts --- an agent may have different abilities in different contexts, such as different language environments 
--- and capture this in the associated inferential semantics. Here $\iota$ denotes an inferon. So,  
\[
\begin{array}{rcl}
\Vdash_\baseC [a] \iota & \mbox{iff} & 
    \mbox{for all $\baseA \in {\bf Bases}(a)$, $\Vdash_{\mathcal{C} \,\cup\, \mathcal{A}} \iota$} 
\end{array} 
\] 
and
\[
\begin{array}{rcl}
\Vdash_\baseC \langle a \rangle \iota & \mbox{iff} & 
    \mbox{for some $\baseA \in {\bf Bases}(a)$, $\Vdash_{\mathcal{C} \,\cup\, \mathcal{A}} \iota$} 
\end{array} 
\]
and so on.

So, while it is possible to reduce the set-up to employ fixed global bases, it seems conceptually unnatural and technically inconvenient to do so --- especially from a modelling perspective.

% \note{Propositional language, predicate language: do we need to spell out the grammar? That would mean we could write 'Atoms($\mathcal{L}$)', and so on.}  
% \note{Need notational consistency with Gheorghiu 
% as well as later in this paper}

\begin{definition}[Inferonic atoms] \label{def:inf-atom}
An \emph{inferonic atom} is a pair $\inferatom{P}{b}{}$, where $P$ denotes a closed proposition (i.e., a propositional letter or a closed atomic predicate) and $\mathfrak{b}$ denotes a \emph{polarity}, taking the values $0$ or $1$. 
% An inferonic atom $\inferatom{P}{b}{}$ is closed if $P$ has no free variables. 
\fillBox
\end{definition}

The polarities associated with propositions correspond to the `assertions' and `denials', in the sense of Rumfitt \cite{Rumfitt2000}, as used by 
Gheorghiu and Buzoku \cite{GheorghiuBuzoku2025} 
in giving a Sandqvist-style base-extension 
semantics for classical propositional logic.  

% \note{Perhaps we should re-brand polarities as `assertion' and `denial', as in \cite{BuzokuGheorghiu} --- more natural in this inferential setting?}

In the base-extension semantics for intuitionistic 
propositional logic \cite{sandqvist2015base},\footnote{Indeed, also in the base-extension semantics for many other, though by no means all, logics.} bases are defined over propositional letters, $p$. In establishing a logic 
of inferons, we need to define bases over inferonic atoms, $\inferatom{P}{b}{}$. We use $\mathfrak{P}$ to denote sets of inferonic atoms. Figure~\ref{fig:inferonic-base} gives the form of inferonic base rules that we consider. 

\begin{definition}[Inferonic base] \label{def:inf-base} An \emph{inferonic base} $\mathcal{B}$ is a base of rules over a denumerable set of \emph{inferonic atoms}, as given in Figure~\ref{fig:inferonic-base}. \fillBox
\end{definition}

In using the logic we develop below --- q.v. 
Figure~\ref{fig:support2} --- as a tooling for modelling an information-theoretic phenomenon, we can think of inferonic bases as the \emph{basic situation} describing the inherent inferential properties of the phenomenon. Then supported inferons $\langle P , \mathcal{P} , \mathfrak{b}\rangle$ provide additional inferential content. Later, we shall see how we can incorporate the local reasoning capabilities of agents through support relations for agent-modalities.

% \note{Rewrite as 'think of': In the context of setting up situations, we shall often refer to inferonic bases as \emph{basic situations}. In a specified context, we refer to the set `BasicSituations' of basic situations.} 

\begin{figure}[ht]
\hrule
\[
 (1) \quad \dfrac{\qquad}{\inferatom{P}{b}{}} \qquad
 (2) \quad \dfrac{\inferatom{P}{b}{1} \ldots \inferatom{P}{b}{k}}{\inferatom{P}{b}{}} \qquad (3) \quad
            \dfrac{
                \begin{array}{ccc} 
                [\mathfrak{P}_1] & & [\mathfrak{P}_k] \\  
                & \ldots & \\ 
                \inferatom{P}{b}{1} & & 
                \inferatom{P}{b}{k}
                \end{array}}
            {\inferatom{P}{b}{}} 
\]
\hrule 
\caption{Inferonic base rules} 
\label{fig:inferonic-base}
\end{figure}

Other forms of bases rules are also possible (e.g., \cite{SandqvistWLD2022}). 
It could be suggested that the presence of polarities in inferonic atoms constitutes a degree of `semantic pollution' \cite{Read2015}. We would argue that our set-up lies within the scope of Avron's criterion \cite{Avron1996} for acceptability, that 
\begin{quote}
` \ldots the framework should be independent of any particular semantics. One should not be able to guess, just from the form of the structures which are used, the intended semantics of a given proof-system \ldots\, .'
\end{quote}

\begin{definition}[Inferonic basis] \label{def:inferonic-basis} 
An \emph{inferonic basis} $\bf{B}$ is a denumerable set of inferonic bases. \fillBox
\end{definition}

% A \emph{basic situation} $\mathcal{S}$ is a base over denumerable set of \emph{inferonic atoms}, $\langle p , \mathfrak{b} \rangle$, where $p$ denotes a proposition and $\mathfrak{b}$ denotes a \emph{polarity}, taking the values $0$ or $1$. 

% Let's call such a base \emph{inferonic}. An 
% inferonic basis $\bf{B}$ is a denumerable set of inferonic bases. 

Note that atoms may be propositional or predicates, following 
Gheorghiu's set-up \cite{Gheorghiu2025-FirstOrder}. Note that we follow Sandqvist's notational conventions \cite{sandqvist2015base}. 

\begin{definition}[Inferon] \label{def:inferon}
An % a basic 
\emph{inferon} is a triple $\iota = \inferon{P}{P}{b}{}$, where $P$ is an atom --- that is, a propositional letter or atomic predicate --- $\mathcal{P}$ is an inferonic base, and $\inferatom{P}{b}{1}$ is an inferonic atom. We let $\iota$ range over inferons. $\inferon{\phi}{P}{b}{}$ is closed if $P$ is closed. \fillBox
\end{definition}

% \note{We need consistent names to be able to distinguish these basic inferons (triples) from compound inferons made from the inferon connectives. I (MC) called them `basic' inferons in the Inferomorphism section. They are the `atomic' inferons in Figure 4 (support relation), so we have atomic inferons (the triples) and inferonic atoms (the pairs)?}

An inferonic base, $\mathcal{B}$, of rules of the form given in Figure~\ref{fig:inferonic-base}, generates a basic derivability relation $\vdash_\mathcal{B}$.  This relates a finite set of inferonic atoms, $\mathfrak{P}$ to a single inferonic atom:
\begin{equation*}
    \mathfrak{P} \vdash_\mathcal{B} \inferatom{P}{b}{}
\end{equation*}

\begin{definition}[A logic of inferons]
\label{def:def:logic-inferons}
For a given inferonic base $\mathcal{B}$, a \emph{logic of inferons} is given by a support relation for validity. Figure~\ref{fig:support1}  
gives the support relation for logics or inferons corresponding, when the polarities are all $1$ to intuitionistic propositional and predicate logics).  
\fillBox
\end{definition}

The logical connectives defined by the support relation in Figure~\ref{fig:support2} construct compound inferons --- henceforth 
called just inferons --- from basic inferons. 

\begin{definition}[Inferantial situation]
\label{def:situation}
An \emph{inferential situation} is then a set of inferons and support for an  inferon by an inferential situation is given by the support relation given in Figure~\ref{fig:support2}. \fillBox
\end{definition}

In the sequel, where no confusion is likely, we shall frequently contract `inferential situation' to just `situation'. 

\begin{definition}[Derivations in a base]
For inferonic base rules of the form given in Figure~\ref{fig:inferonic-base}, here for convenience 
written as 
\[
\frac{\begin{array}{ccc}
        [\mathfrak{P_1}] &        & [\mathfrak{P}_k] \\
        & \ldots & \\
        \inferatom{Q}{b}{1}   
        & & \inferatom{Q}{b}{k} \rangle  
      \end{array}
}{\inferatom{R}{c}{}} \, \atRule{R}
\]
where $\mathfrak{P}$s denote sets of inferonic atoms, derivations are constructed using the following combinators: 
\[
\begin{array}{rl}
 \mbox{(Ref)} & \mbox{$\mathfrak{P} , \inferatom{P}{b}{} \vdash_\base{B} \inferatom{P}{b}{}$} \\ 
(\mbox{App}_\atRule{R}) & \mbox{if $\atRule{R} \in \base{B}$ and, for all $i \in [1,n]$, $\mathfrak{Q} , \mathfrak{P}_i \vdash_\base{B} \inferatom{Q}{b}{i}$, then $\mathfrak{Q} \vdash_\base{B} \inferatom{R}{c}{}$} 
\end{array}
\]
\fillBox
\end{definition}

\noindent As in the purely propositional setting, there is a substitution (cut) operation on base derivations. 

\medskip 

\begin{definition}[Consistent base]
\label{def:consistentInferonicBase}
A base $\mathcal{B}$ is \emph{consistent} if, for all $P$ in the given language, it is not the case that both $\vdash_\mathcal{B} \inferatom{P}{0}{}$ 
and $\vdash_\mathcal{B} \inferatom{P}{1}{}$. \fillBox
\end{definition}
%\note{What's the relationship between this definition of consistency and the definition of $\bot$? What should it be?}

Many different logics of inferons can be defined using support relations that build upon inferonic  As we have remarked, inferonic bases over sets/algebras of polarities other than just $\mathfrak{0}$ and $\mathfrak{1}$ can of course also be considered. We begin with an intuitionistic logic of inferons 
--- similar to Sandqvist's treatment of IPL \cite{sandqvist2015base}, with the support relation as given in Figure~\ref{fig:support1}.\footnote{The same support relation gives both intuitionistic and classical logics of inferons. Classical logic is obtained if bases are restricted to rules of types (1) and (2) and intuitionistic logic is obtained if rules of type (3) are permitted.} 

Later, we extend this to first-order following Gheorghiu \cite{Gheorghiu2025-FirstOrder}.

\begin{figure}[ht]
\hrule
\[
    \begin{array}{lrcl} 
       (\mbox{At}) & \Vdash_\mathcal{B} \inferon{P}{P}{b}{}   & \mbox{iff} & 
       \mbox{$\vdash_{\mathcal{B} \,\cup\, \mathcal{P}} 
       \inferatom{P}{b}{}$, for atomic propositions $P$} \\ 
       (\wedge) & \Vdash_\mathcal{B} {\phi}_1 \wedge {\phi}_2 & \mbox{iff} & \mbox{$\Vdash_\mathcal{B} {\phi}_1$ and $\Vdash_\mathcal{B} {\phi}_2$} \\ 
       (\vee) & \Vdash_\mathcal{B} {\phi}_1 \vee {\phi}_2 
       & \mbox{iff} & \mbox{for every inferon 
       ${\iota}$ and every 
       $\mathcal{C} \supseteq \mathcal{B}$,} \\   
       & & & \mbox{if $\phi_1  \Vdash_\mathcal{C} \iota$ and $\phi_2 \Vdash_\mathcal{C} \iota$, then $\Vdash_\mathcal{C} \iota$} \\
       (\supset) & \Vdash_\mathcal{B} {\phi}_1 \supset {\phi}_2 & \mbox{iff} & \mbox{${\phi}_1 \Vdash_\mathcal{B} {\phi}_2$} \\ 
       (\mbox{Inf}) & \mbox{for $\Theta \neq \emptyset$, 
        $\Theta \Vdash_\mathcal{B} {\phi} $} & \mbox{iff} & 
        \mbox{for every $\mathcal{C} \supseteq \mathcal{B}$, if $\Vdash_\mathcal{C} {\psi}$, for every ${\psi} \in \Theta$,} \\ 
        & & & \mbox{then $\Vdash_\mathcal{C} {\phi}$} \\ 
        (\bot) & \Vdash_\mathcal{B} \bot  
        & \mbox{iff} & \mbox{for all closed ${\iota}$, $\Vdash_\mathcal{B} {\iota}$} 
    \end{array}
\]

% \dotfill 

% \[
% \begin{array}{lrcl}     
% (\forall) &  \Vdash_\mathcal{B} \forall{x}.\phi & \mbox{iff} & 
%      \mbox{$\Vdash_\mathcal{B} \phi[t/x]$ for all $t \in {\rm Cl}({\mathcal{T}})$} \\
% (\exists) & \Vdash_\mathcal{B} \exists{x}.\phi & \mbox{iff} & 
%   \mbox{for any $\mathcal{C} \supseteq \mathcal{B}$ and all closed $\iota$, if} \\ 
%   & & & \mbox{$\phi[t/x] \Vdash_\mathcal{C} \iota$, for all $t \in {\rm Cl(\mathcal{T})}$, then $\Vdash_\mathcal{C} \iota$} 
% \end{array}
% \]

  \dotfill 
\[
\begin{array}{lrcl}
(\mbox{Validity}) & \Gamma \Vdash \phi & \mbox{iff} & 
    \mbox{for all $\baseB$, $\Gamma \Vdash_\baseB \phi$}
\end{array}
\]

\hrule 
\caption{Support relation for inferons: propositional case} 
\label{fig:support2}
\end{figure} 

% \note{Define closed inferons, $\iota$s, etc.} 

% \note{In the predicate case, Ps in bases closed, as are basic inferons.}

% If the polarities are restricted to $1$ and basic inferons are restricted to the empty base, then the support relation reduces to the one given by Gheorghiu \cite{Gheorghiu2025-FirstOrder}, which itself corresponds to Sandqvist's relation \cite{sandqvist2015base} in the intuitionistic propositional case. Gheorghiu \cite{} establishes the soundness and completeness of intuitionistic and classical predicate logics with respect to this restriction of the  base-extension semantics given in Figure~\ref{fig:support1}. He employs Hilbert-type systems to establish the results, but corresponding results for the corresponding natural deduction systems of course follow.  

% \begin{theorem}[Gheorghiu \cite{Gheorghiu2025-FirstOrder}] \label{thm:hilbert-complete1}
% The standard Hilbert-type proof systems for both classical and intutionistic logic are sound and complete with respect to the base-extension semantics presented in \cite{}. \fillBox
% \end{theorem} 

% So, Theorem~\ref{thm:hilbert-complete1} implies that Prawitz's natural deduction system for first-order predicate logic (as in Figure~\ref{fig:nj}, without (\emph{Inferon})) is sound and complete with respect to the base-extension semantics presented by Gheorghiu in \cite{} and given in Figure~\ref{fig:}. 

The base-extension semantics presented here very closely corresponds that for intutionistic logic, the difference being the inferonic structure of propositions. 
For the soundness and completeness of this theory of inferons, we consider the the derivability relation 
given by NJ with inferons as atomic formul{\ae} and with the ({Inferon}) 
axiom, as given in Figure~\ref{fig:inferon-ax}. Note that $\Vdash$ is transitive: that is, if 
$\Gamma \Vdash \phi$ and $\Gamma , \phi \Vdash \psi$, then $\Gamma \Vdash \psi$. 
% for all 
% $\phi$, $\psi$, $\chi$, if $\phi \Vdash \psi$ and $\psi \Vdash \chi$, then $\phi \Vdash \chi$. 

\begin{figure}[ht]
\hrule
\vspace{1mm} 
\[
% \begin{array}{lccl}
% \mbox{(Inferon)} & 
% \dfrac{}{\vdash \inferon{P}{\baseP}{b}{}}{} & \mbox{iff} & 
% \vdash_\baseP \inferatom{P}{b}{}
% \end{array}
\begin{array}{lccl}
\mbox{(Inferon)} & 
\dfrac{}{\Gamma \vdash \inferon{P}{\baseP}{b}{}}{} & \mbox{iff} & 
\mbox{for all $\baseP' \supseteq \baseP$ s.t., for all $\phi \in \Gamma$, if $\Vdash_{\baseP'} \phi$, then $\vdash_{\baseP'} \inferatom{P}{b}{}$} 
\end{array}
\] 
\vspace{1mm}
\hrule
\caption{The axiom ({Inferon}) for the theory of inferons}
\label{fig:inferon-ax}
\end{figure}

The (Inferon) axiom, as given in Figure~\ref{fig:inferon-ax}, is a generalization, in the atomic case of the principal formula, of the ($Ax$) in NJ. Its purpose is to axiomatize the theory of inferons. In the case in which $\Gamma$ is empty, it reduces, as one would expect, to 
\[
 \begin{array}{lccl}
 \mbox{} & 
 \dfrac{}{\vdash \inferon{P}{\baseP}{b}{}}{} & \mbox{iff} & 
 \vdash_\baseP \inferatom{P}{b}{}
 \end{array}
\] 
which simply enforces the coherence of 
NJ and base proof for inferons. 

The (Inferon) axiom also provides a 
clear illustration of how --- not only in B-eS but also in P-tV ---  
bases replace models in the giving of meaning to formul{\ae} 
and proofs. Specifically, an appeal to the truth of an atomic formula in a model is replaced by the requirement of provability in a base. 
We remark that it would be interesting to compare and contrast Mares's model-theoretic semantics, expressed in terms of `info models' \cite{Mares1996}, and Restall's analysis \cite{Restall1996} with our B-eS for our logic of inferons.   

The need for the richer form of (Inferon) arises technically from the presence of local bases within inferons --- it handles the effect on the support relation that is induced via the axiom rule.  

\begin{remark}[Veridicality] \label{rem:verid}
It might be asked whether our choice to follow situation theory in employing both the assertion and denial of inferons, through the parameter $\mathfrak{b}$, entails a violation of veridicality \cite{Floridi2011-PhilInf,FrescoMichael2016} --- that is, the 
principle that information must be true. We would argue that it does not. In inferon logic, veridicality is respected at the level of the support relation --- the support of a denial is determoned to hold. That said, we observe that it would seem to be valuable to consider in more detail the relationship between validity and assertion/denial. \fillBox
% \note{More needed here? Cf. Rumfitt \cite{Rumfitt2000}.}
\end{remark}

For the propositional logic of inferons,
as given by the propositional support relation and the (Inferon) axiom with 
propositional atoms, we can adapt 
Sandqvist's proofs of soundness and completeness  \cite{sandqvist2015base}. 

\begin{theorem}[Soundness for the propositional logic of inferons] \label{thm:prop-inferon-sound}
If $\Gamma \vdash \phi$, then $\Gamma \Vdash \phi$.
\end{theorem}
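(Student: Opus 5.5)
We argue by induction on the height of the NJ derivation of $\Gamma \vdash \phi$, with inferons as atomic formul{\ae} and the (Inferon) axiom of Figure~\ref{fig:inferon-ax} added. For each rule we show that if its premisses are valid then so is its conclusion. Following Sandqvist~\cite{sandqvist2015base}, we prove the stronger base-relative statement: for every base $\baseB$, the premisses being supported in $\baseB$ (in the sense of (Inf)) implies the conclusion is supported in $\baseB$. Two preliminary facts are needed, and we prove them first.

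The first fact is \emph{monotonicity}: if $\Vdash_\baseB \phi$ and $\baseC \supseteq \baseB$, then $\Vdash_\baseC \phi$. We prove it by induction on $\phi$. The atomic case holds because $\baseC \cup \baseP \supseteq \baseB \cup \baseP$ and base derivability is monotone. The cases for $\vee$, $\supset$ and $\bot$ are immediate from the clauses, which already quantify over extensions.

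The second fact is the \emph{generalized disjunction/falsum elimination} lemma: if $\Vdash_\baseB \phi_1 \vee \phi_2$, $\phi_1 \Vdash_\baseB \chi$ and $\phi_2 \Vdash_\baseB \chi$, then $\Vdash_\baseB \chi$ for arbitrary $\chi$, not only for basic inferons. Similarly, $\Vdash_\baseB \bot$ implies $\Vdash_\baseB \chi$ for all $\chi$. Both are proved by induction on $\chi$, exactly as in Sandqvist's Lemma for IPL. In the atomic case for $\bot$ we simply instantiate the $(\bot)$ clause at the closed inferon $\chi$. The conjunction and implication cases use monotonicity to push the disjunctive hypothesis up to extensions $\baseC$.

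With these lemmas, the rule cases go through routinely.
\begin{itemize}
\item[--] $\wedge$I and $\wedge$E are direct from the $(\wedge)$ clause.
\item[--] $\supset$I and $\supset$E come from unfolding $(\supset)$ and (Inf). Note that $\Gamma,\phi \Vdash_\baseB \psi$ means that for all $\baseC \supseteq \baseB$ supporting $\Gamma,\phi$, we have $\Vdash_\baseC \psi$.
\item[--] For $\vee$I, suppose $\Vdash_\baseC \phi_1$ and $\phi_1 \Vdash_\baseC \iota$. Then $\Vdash_\baseC \iota$ follows by (Inf) at $\baseC$ itself.
\item[--] $\vee$E and $\bot$E use the second lemma, applied at each extension $\baseC$ supporting $\Gamma$.
\item[--] (Ax) is trivial.
\end{itemize}

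The only genuinely new case is the axiom (Inferon). Suppose its side condition holds, and let $\baseB$ support $\Gamma$. We must show $\vdash_{\baseB \cup \baseP} \inferatom{P}{b}{}$. Instantiate the side condition at $\baseP' := \baseB \cup \baseP$, which extends $\baseP$. This base supports every $\phi \in \Gamma$ by monotonicity, since it extends $\baseB$. Hence $\vdash_{\baseB\cup\baseP} \inferatom{P}{b}{}$, which is exactly $\Vdash_\baseB \inferon{P}{\baseP}{b}{}$ by (At).

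I expect this case to be the main obstacle. The side condition quantifies over bases extending $\baseP$, while support of $\Gamma$ is given relative to $\baseB$, so the step hinges on choosing $\baseB\cup\baseP$ and on monotonicity across the union. A related subtlety is whether the (At) clause with local bases is genuinely monotone once $\baseP$ is absorbed. It is, because $(\baseC\cup\baseP) \supseteq (\baseB\cup\baseP)$. I would check this carefully, including the case $\Gamma=\emptyset$, where the side condition reduces to $\vdash_{\baseP'}\inferatom{P}{b}{}$ for all $\baseP'\supseteq\baseP$. Finally, quantifying over all $\baseB$ gives $\Gamma \Vdash \phi$.
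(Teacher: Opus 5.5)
Your proposal is correct and follows essentially the same route as the paper. The paper likewise extends Sandqvist's soundness argument and treats (Inferon) as the only new case, matching the side condition's bases $\mathcal{P}' \supseteq \mathcal{P}$ with bases of the form $\mathcal{B} \cup \mathcal{P}$. You make explicit the monotonicity step (that $\mathcal{B}\cup\mathcal{P}$ supports $\Gamma$ because $\mathcal{B}$ does), which the paper's ``and vice-versa'' leaves implicit.
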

\begin{proof} 

We extend the proof in \cite{sandqvist2015base} by the case for the (Inferon) rule. The proof goes by showing that a translation of the rules of $\vdash$ into the support relation still hold. For example, here are the translations of the rules for implication:
\[
\begin{array}{ll}
\mbox{($\supset I$)}' & \mbox{If $\Gamma,\phi \Vdash\psi$, then $\Gamma\Vdash\phi\supset\psi$}\\ 
\mbox{($\supset E$)}' & \mbox{If $\Gamma\Vdash\phi\supset\psi$ and $\Gamma\Vdash \phi$, then $\Gamma\Vdash\psi$}
\end{array} 
\] 

For $(\supset I)'$, assume that $\Gamma,\phi \Vdash\psi$. Take some $\baseB$ s.t. $\Vdash_\baseB \chi$ for every $\chi\in\Gamma$. For every $\baseC \supseteq\baseB$ s.t. $\Vdash_\baseC \phi$ we have $\Vdash_\baseC \psi$. So, we have $\phi\Vdash_\baseB \psi$ and $\Vdash_\baseB \phi\supset\psi$. From this we can conclude $\Gamma\Vdash \phi\supset\psi$.

For $(\supset E)'$, note, by (Inf), that if $\phi\Vdash_\baseB \psi$ and $\Vdash_\baseB \phi$, then $\Vdash_\baseB \psi$. 
By $(\supset)$ (in Figure~\ref{fig:support2}), $\phi\Vdash_\baseB \psi$ iff $\Vdash_\baseB\phi\supset\psi$ and so, if $\Vdash_\baseB\phi\supset\psi$ and $\Vdash_\baseB \phi$, then $\Vdash_\baseB \psi$. So, $\phi\supset\psi, \phi\Vdash \psi$. 
%That is, whenever $\Vdash_\baseB\phi\supset\psi$ and $\Vdash_\baseB \psi$. So, $\phi\supset\psi, \phi\Vdash \psi$. 
%We conclude by the transitivity of $\Vdash$. 
By transitivity, for every $\Gamma$ s.t. $\Gamma \Vdash \phi\supset\psi$ and $\Gamma\Vdash \phi$, we have $\Gamma\Vdash\psi$.

For our proof here, we have added only the ($\mbox{Inferon}$) rule and so it suffices to show that
\[
\begin{array}{lccl}
\mbox{(Inferon)}' & \Gamma \Vdash \inferon{P}{\baseP}{b}{} & \mbox{iff} & \mbox{for all $\baseP' \supseteq \baseP$ s.t., for all $\phi \in \Gamma$, if $\Vdash_{\baseP'} \phi$, then $\vdash_{\baseP'} \inferatom{P}{b}{}$} 
\end{array} 
\] 
where $(\mbox{Inferon})'$ is, following Sandqvist's presentation \cite{sandqvist2015base}, the translation of $(\mbox{Inferon})$ into the support relation. 

By validity and (Inf), $\Gamma \Vdash \inferon{P}{\baseP}{b}{}$ iff for all $\base B$ and $\baseB'\supseteq\baseB$ s.t., for all $\phi \in \Gamma$, if $\Vdash_{\baseB'} \phi$, $\Vdash_\baseB \inferon{P}{\baseP}{b}{}$. By (At), $\Vdash_\baseB \inferon{P}{\baseP}{b}{}$ iff $\Vdash_{\baseB\cup\baseP} \inferatom{P}{b}{}$. It suffices to point out that for every $\baseP'$, there is a $\baseB$ s.t. $\baseB\cup\baseP = \baseP'$ and vice-versa.
\end{proof} 

\begin{theorem}[Completeness for the propositional logic of inferons] \label{thm:prop-inferon-complete}

If $\Gamma \Vdash \phi$, then $\Gamma \vdash \phi$.
\end{theorem}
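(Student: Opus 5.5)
We adapt Sandqvist's completeness argument \cite{sandqvist2015base}, which runs via a flattening construction, with inferons in place of propositional letters. First, for given $\Gamma$ and $\phi$, let $\Xi$ be the set of all subformul{\ae} of $\Gamma \cup \{\phi\}$. We assign to each $\chi \in \Xi$ a fresh inferonic atom $\chi^\flat$, taken from the denumerable supply of inferonic atoms and not occurring in any local base of an inferon in $\Xi$. For a basic inferon $\iota = \inferon{P}{P}{b}{}$, the natural choice is to keep $\iota^\flat$ fresh as well. We then build a base $\mathcal{N}$ consisting of the atomic rules that simulate the NJ introduction and elimination rules on the flattened atoms, exactly as Sandqvist does. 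For instance, $(\chi_1^\flat, \chi_2^\flat \Rightarrow (\chi_1\wedge\chi_2)^\flat)$, and the level-2 rule $((\chi_1\vee\chi_2)^\flat, (\chi_1^\flat \Rightarrow r), (\chi_2^\flat \Rightarrow r)) \Rightarrow r$ for every inferonic atom $r$. For each basic inferon $\iota = \inferon{P}{P}{b}{} \in \Xi$ we add two bridging rules, $\inferatom{P}{b}{} \Rightarrow \iota^\flat$ and $\iota^\flat \Rightarrow \inferatom{P}{b}{}$, and we also add the rules of $\mathcal{P}$ to $\mathcal{N}$. These rules link the flattened atom to the local-base derivability that (At) refers to.

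Second, we prove the flattening lemma: for all $\chi \in \Xi$ and all $\mathcal{C} \supseteq \mathcal{N}$, $\Vdash_\mathcal{C} \chi$ iff $\vdash_\mathcal{C} \chi^\flat$. The proof is by induction on $\chi$. For the connective cases we copy Sandqvist's argument verbatim; it uses only the simulating rules and the (Ref)/(App)/cut properties of base derivations, all of which carry over. The disjunction and $\bot$ cases quantify over all inferons $\iota$ rather than atoms. Here we must check that the clause ranging over inferons agrees with ranging over inferonic atoms, i.e., that every inferonic atom $r$ is the target of some basic inferon. This holds because $\inferon{R}{\emptyset}{c}{}$ is supported at $\mathcal{C}$ iff $\vdash_\mathcal{C} \inferatom{R}{c}{}$. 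The new base case is $\chi = \inferon{P}{P}{b}{}$. By (At), $\Vdash_\mathcal{C}\chi$ iff $\vdash_{\mathcal{C}\cup\mathcal{P}} \inferatom{P}{b}{}$, which is $\vdash_\mathcal{C}\inferatom{P}{b}{}$ since $\mathcal{P} \subseteq \mathcal{N} \subseteq \mathcal{C}$. By the bridging rules this is equivalent to $\vdash_\mathcal{C} \iota^\flat$.

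Third, from $\Gamma \Vdash \phi$ we specialise to $\mathcal{N}$ and obtain $\Gamma^\flat \vdash_\mathcal{N} \phi^\flat$. This uses the lemma together with the standard observation that, for sets of atoms, $\mathfrak{P} \vdash_\mathcal{B} r$ iff $\mathfrak{P} \Vdash_\mathcal{B} r$, which is Sandqvist's atomic-monotonicity lemma and needs no change. Finally, we read the base derivation back into NJ by induction on $\vdash_\mathcal{N}$, replacing each $\chi^\flat$ by $\chi$. Each simulating rule becomes the corresponding NJ rule. Each bridging rule, and each use of a rule of $\mathcal{P}$, becomes an instance of (Inferon). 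The (Inferon) axiom is precisely a derivation of $\Gamma' \vdash \iota$ from base derivability of $\inferatom{P}{b}{}$ relative to extensions satisfying $\Gamma'$, so we invoke it with the current assumptions as $\Gamma'$.

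I expect this last translation step, together with the bridging base case, to be the main obstacle. Raw inferonic atoms $\inferatom{P}{b}{}$ occurring inside $\mathcal{N}$-derivations are not formul{\ae} of the logic. They must therefore be represented as the inferon $\inferon{P}{\emptyset}{b}{}$, and we must check that rules of $\mathcal{P}$ used in mid-derivation read back as legitimate (Inferon) instances. Failing that, I would try folding the rules of $\mathcal{P}$ into the bridging rules themselves, so that $\vdash_\mathcal{C}\iota^\flat$ iff $\vdash_{\mathcal{C}\cup\mathcal{P}}\inferatom{P}{b}{}$ holds directly. Each bridge then translates to a single (Inferon) step justified by soundness (Theorem~\ref{thm:prop-inferon-sound}) in reverse.
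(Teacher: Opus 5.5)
There is a genuine gap, and it sits where you suspected: with your $\mathcal{N}$ the read-back step is not merely delicate, it is false.

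The bridges $\inferatom{P}{b}{}\Rightarrow\iota^\flat$ and $\iota^\flat\Rightarrow\inferatom{P}{b}{}$ send every basic inferon with atom $P$ and polarity $\mathfrak{b}$ through the same raw atom. Adding the local bases to $\mathcal{N}$ then makes them global. Together, these collapse inferons that differ only in their local base.

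Concretely, let $\mathcal{P}_1=\{\Rightarrow\inferatom{P}{1}{}\}$, $\iota_1=\inferonPlain{P}{\mathcal{P}_1}{1}{}$, $\iota_2=\inferonPlain{P}{\emptyset}{1}{}$, $\Gamma=\{\iota_1\}$ and $\phi=\iota_2$.
\begin{itemize}
\item[--] The two bridges give $\iota_1^\flat\vdash_{\mathcal{N}}\iota_2^\flat$, so your translation would produce $\iota_1\vdash\iota_2$.
\item[--] But $\Vdash_\emptyset\iota_1$ while $\nvdash_\emptyset\inferatom{P}{1}{}$. Hence $\iota_1\nVdash\iota_2$, and by Theorem~\ref{thm:prop-inferon-sound}, $\iota_1\nvdash\iota_2$.
\end{itemize}

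Your flattening lemma does hold for this $\mathcal{N}$; its base case works precisely because $\mathcal{P}\subseteq\mathcal{N}$. That is the real problem. Above $\mathcal{N}$ the clause (At) no longer sees local bases, so $\iota_1\Vdash_{\mathcal{N}}\iota_2$ holds even though $\iota_1\nvdash\iota_2$. Sandqvist's argument uses support only at the single base $\mathcal{N}$, so $\mathcal{N}$ must be generic: $\Gamma\Vdash_{\mathcal{N}}\phi$ must force $\Gamma\vdash\phi$. Yours is not.

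Step by step, the translation breaks as follows:
\begin{itemize}
\item[--] The bridge $\iota^\flat\Rightarrow\inferatom{P}{b}{}$ would have to read back as $\inferonPlain{P}{\mathcal{P}}{b}{}\vdash\inferonPlain{P}{\emptyset}{b}{}$.
\item[--] A rule $\inferatom{Q}{c}{}\Rightarrow\inferatom{P}{b}{}$ of $\mathcal{P}$ would have to read back as $\inferonPlain{Q}{\emptyset}{c}{}\vdash\inferonPlain{P}{\emptyset}{b}{}$.
\end{itemize}
By the computation in the soundness proof, (Inferon) yields $\Delta\vdash\iota$ exactly when $\Delta\Vdash\iota$. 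In general neither sequent above is valid, so no instance of (Inferon) justifies them.

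The missing idea is to keep each local base attached to its atom instead of adding it to $\mathcal{N}$. The paper does this as follows:
\begin{itemize}
\item[--] It introduces indexed atoms $\inferatom{P}{b}{}_\baseP$, whose derivability is stipulated by $\mathfrak{P}\vdash_\baseB\inferatom{P}{b}{}_\baseP$ iff $\mathfrak{P}\vdash_{\baseB\cup\baseP}\inferatom{P}{b}{}$.
\item[--] It adds a second flattening $\phi^{\flat-}$ sending $\inferonPlain{P}{\baseP}{b}{}$ to $\inferatom{P}{b}{}_\baseP$, while $\phi^\flat$ leaves inferons and $\bot$ fixed.
\item[--] As a result, distinct local bases give distinct atoms, nothing from $\baseP$ enters $\mathcal{N}$, and the rules of $\mathcal{N}$ are stated over $\phi^{\flat-}$.
\item[--] It proves the analogue (\ref{comp1}) of Sandqvist's Lemma~2.2 for these atoms.
\item[--] It proves the correspondence (\ref{comp3}), $\Gamma^{\flat-}\vdash_\baseB\phi^{\flat-}$ iff $\Gamma^\flat\Vdash_\baseB\phi^\flat$, which replaces Sandqvist's Theorem~3.1.
\end{itemize}
$(\dagger)$ and $(\ddagger)$ are then obtained as in Sandqvist, with $\phi^{\flat-}$ used at the level of base derivability.

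Your fallback is to make $\vdash_{\mathcal{C}}\iota^\flat$ equivalent to $\vdash_{\mathcal{C}\cup\mathcal{P}}\inferatom{P}{b}{}$ for all $\mathcal{C}\supseteq\mathcal{N}$. That states exactly the property needed, but you give no rules that realize it. Any such construction would also have to stop a raw atom shared by two inferons from carrying one local base into the other. As it stands, the fallback describes what is needed rather than constructing it.

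Finally, in your third step you cite the coincidence $\mathfrak{P}\vdash_{\mathcal{B}}r$ iff $\mathfrak{P}\Vdash_{\mathcal{B}}r$. This is not well-typed here, since $\Vdash_{\mathcal{B}}$ applies to inferons, not inferonic atoms; the paper notes this explicitly. In your set-up that step follows from Lemma~2.2 and your flattening lemma alone, so this slip is harmless. The read-back failure is not.
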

\begin{proof} 

Again, we adapt the proof of \cite{sandqvist2015base}, with some important differences.

The basic strategy is to create a base $\mathcal{N}$ so that $\vdash_\mathcal{N}$ behaves like $\vdash$. This is done by encoding the rules of $\vdash$ into base rules in $\mathcal{N}$ with the help of a flattening function that assigns an atomic formul{\ae} to each formula. We then show two properties of $\mathcal{N}$ ($(\dagger)$ and $(\ddagger)$ below) that, together with the property $\mathbb{T} \vdash_\baseB p$ iff $\mathbb{T}\Vdash_\baseB p$ (Theorem 3.1 in \cite{sandqvist2015base}), suffice to proof the completeness result. Our proof here follows that strategy but differs in some key aspects. First, we extend the base-support relation to formul{\ae} of the form $\inferatom{P}{b}{}_\baseP$ such that $\inferatom{P}{b}{}_\baseP$ is supported at a base $\baseB$ if and only if $\inferatom{P}{b}{}$ is supported at $\baseB\cup\baseP$. We require two flattening functions: one that assigns every formula an inferon and another that assigns them a formula of this new type. We then prove $(\ref{comp3})$ below which takes the role of Theorem 3.1 in the proof. Given that, $(\dagger)$ and $(\ddagger)$ follow as before and we obtain the wanted result.

Let $\Gamma'$ be the set containing all members of $\Gamma\cup\phi$ and their subsentences. We largely retain the flattening function $\phi^\flat$ so that for for any $\phi\in \Gamma$ we associate a $\phi^\flat \notin \Gamma$ s.t. for every $\phi,\psi\in \Gamma$ $\phi^\flat \neq \psi^\flat$ whenever $\phi\neq\psi$ with $\inferon{P}{\baseP}{b}{}$ taking the role of the atom and so, $\inferon{P}{\baseP}{b}{}^\flat = \inferon{P}{\baseP}{b}{}$. Our one change to $\phi^\flat$ is that we take $\bot^\flat = \bot$. As before for a set of formul{\ae} $\Gamma$, let $\Gamma^\flat$ be the sequence obtained by replacing any $\phi \in \Gamma$ with $\phi^\flat$.

% \note{A GAP HERE? Need to get from $<P,B,b>$ to $<P,b>_B$?}  
% \note{That is done below through the second flattening function $\phi^{b-}$}

For technical convenience in adapting Sandqvist's proof, we extend the base-support relation to a new type of inferonic atom of the form $\inferatom{P}{b} 
{}_\baseP$ as follows: 
\begin{equation*}
    \mathfrak{P} \vdash_\baseB \inferatom{P}{b}{}_\baseP     \quad\mbox{iff}\quad  \mathfrak{P} \vdash_{\baseB\cup\baseP} \inferatom{P}{b}{}
\end{equation*}

%\[
%\begin{array}{lcl}
 %\mathfrak{P} \vdash_\baseB \inferatom{P}{b}{}_\baseP    & \mbox{iff} & \mathfrak{P} \vdash_{\baseB\cup\baseP} \inferatom{P}{b}{}   \\
%\end{array}
%\]
With this device we can define a further flattening function $\phi^{b-}$ just like $\phi^\flat$ expect to inferonic atoms so that $\inferon{P}{\baseP}{b}{}^{\flat -} = \inferatom{P}{b}{}_\baseP$. Again, let $\Gamma^{\flat -}$ be $\Gamma$ with all $\phi$ replaced by $\phi^{\flat -}$. We also define the functions $\phi^{\natural}$ as in \cite{sandqvist2015base} and $\phi^{\natural -}$ accordingly, that is so that for any $\phi\in \Gamma'$, $(\phi^\flat)^\natural = \phi$ and $(\phi^{\flat-})^{\natural-} = \phi$.

%and $\phi^{\sharp2}$ as in \cite{Gheorghiu2025-FirstOrder} and $\phi^{\sharp1-}$ and $\phi^{\sharp2-}$accordingly. 

We prove the following property of $\inferatom{P}{b}{}_\baseP$ formul{\ae} as it will be required in our proof. It is analogous to Lemma~2.2 in \cite{sandqvist2015base}: $\mathbb{T}\vdash_\baseB p$ iff for every $\baseB'\supseteq \baseB$, if $\vdash_{\baseB'}p$ for every $p \in \mathbb{T}$, then $\vdash_{\baseB'} p$.
%(Lemma 2.2 for $<<P,b>>_\baseB$)
\begin{equation}
\label{comp1}
\mathfrak{P}\vdash_\baseB \inferatom{P}{b}{}_\baseP     \quad\mbox{iff}\quad \mbox{for every $\baseB'\supseteq \baseB$, if $\vdash_{\baseB'}\phi$ for every $\phi \in \mathfrak{P}$, then $\vdash_{\baseB'} \inferatom{P}{b}{}_\baseP$}
\end{equation}

%\[
%\begin{array}{llcl}
% (1) & \mathfrak{P}\vdash_\baseB \inferatom{P}{b}{}_\baseP    & \mbox{iff} & \mbox{for every $\baseB'\supseteq \baseB$, if $\vdash_{\baseB'}\phi$ for every $\phi \in \mathfrak{P}$, then $\vdash_{\baseB'} \inferatom{P}{b}{}_\baseP$}  \\
%\end{array}
%\]
Note, by definition, $\vdash_{\baseB'} \inferatom{P}{b}{}_\baseP$ iff $\vdash_{\baseB'\cup\baseP} \inferatom{P}{b}{}$ and $\mathfrak{P}\vdash_\baseB \inferatom{P}{b}{}_\baseP$ iff $\mathfrak{P}\vdash_{\baseB\cup\baseP} \inferatom{P}{b}{}$. 
%The result $\mathbb{T}\vdash_\baseB p$ iff for every $\baseB'\supseteq \baseB$, if $\vdash_{\baseB'}p$ for every $p \in \mathbb{T}$, then $\vdash_{\baseB'} p$ called Lemma 2.2 in \cite{sandqvist2015base} 
By a straightforward generalization of Lemma 2.2 above, $\mathfrak{P}\vdash_{\baseB\cup\baseP} \inferatom{P}{b}{}$ iff for every $\baseC \supseteq \baseB\cup\baseP$, if $\vdash_{\baseC} \phi$ for every $\phi \in \mathfrak{P}$, then $\vdash_{\baseC} \inferatom{P}{b}{}$. So it suffices to show that for every $\baseB'\supseteq \baseB$, if $\vdash_{\baseB'} \phi$ for every $\phi\in \mathfrak{P}$, then $\vdash_{\baseB'\cup\baseP} \inferatom{P}{b}{}$ iff for every $\baseC \supseteq \baseB\cup\baseP$, if $\vdash_{\baseC} \phi$ for every $\phi \in \mathfrak{P}$, then $\vdash_{\baseC} \inferatom{P}{b}{}$. For the left-to-right direction, it suffices to point out that $\baseC\supseteq\baseB'$ and that $\baseC\cup\baseP = \baseC$. Right-to-left follows because if $\vdash_{\baseB'} \phi$, then also $\vdash_{\baseB'\cup\baseP} \phi$ and $\baseB'\cup\baseP \supseteq \baseB\cup\baseP$.

%\note{Should we state the definition of $\mathcal{N}$?}

%\note{Yes! At least in general terms: idea + a couple of examples?}

In \cite{sandqvist2015base,Gheorghiu2025-FirstOrder}, this completeness proof is reliant on the fact that the following holds for atomic p in their semantics: $\mathbb{T}\vdash_\baseB p$ iff $\mathbb{T}\Vdash_\baseB p$. Note that such a result cannot hold for this semantics as $\vdash_\baseB$ deals with inferonic atoms and $\Vdash\baseB$ with inferons.

Our extension of the %base-
support relation and the new flattening function $\phi^{\flat -}$ allow us to prove an analogous result sufficient to give completeness:
\begin{equation}
\label{comp2}
    \Gamma^{\flat -}\vdash_\baseB \inferatom{P}{b}{}_\baseP \quad\mbox{iff }\quad \Gamma^\flat\Vdash_\baseB \inferon{P}{\baseP}{b}{}
\end{equation}

%\[
%\begin{array}{llcl}
%(2) & \Gamma^{\flat -}\vdash_\baseB \inferatom{P}{b}{}_\baseP    & \mbox{iff} & \Gamma^\flat\Vdash_\baseB \inferon{P}{\baseP}{b}{}  \\
%\end{array}
%\]

For empty $\Gamma$, this follows directly from the definition of $\inferatom{P}{b}{}_\baseP$ and (At). Otherwise this follows from those together with (Inf) and (\ref{comp1}). Since for any $\phi^\flat = \inferon{P}{\baseP}{b}{}$ we have $\phi^{\flat -} = \inferatom{P}{b}{}_\baseP$,  we also get the following: 
\begin{equation}
\label{comp3}
   \Gamma^{\flat -}\vdash_\baseB \phi^{\flat -} \quad\mbox{iff}\quad \Gamma^\flat\Vdash_\baseB \phi^{\flat} 
\end{equation}

%\[
%\begin{array}{llcl}
%(3) & \Gamma^{\flat -}\vdash_\baseB \phi^{\flat -}    & \mbox{iff} & \Gamma^\flat\Vdash_\baseB \phi^{\flat}  \\
%\end{array}
%\]

The rest of the proof remains unchanged except uses of (\ref{comp3}) are substituted for any uses of $\mathfrak{P} \vdash_\baseB p$ iff $\mathfrak{P}\Vdash_\baseB p$ and $\phi^{\flat -}$ replaces $\phi^b$ on the level of base-support. 

The strategy involves defining a base $\mathcal{N}$ that mimics the rules of natural deduction. $\mathcal{N}$ is defined as the base containing all and only the rules of the following form:
\[
\begin{array}{ll}
1. & (\phi^{\flat -} \Rightarrow \psi^{\flat -})\Rightarrow (\phi\supset \psi)^{\flat -}\\
2. & (\Rightarrow(\phi\supset\psi)^{\flat -}),(\Rightarrow\phi^{\flat -})\Rightarrow\psi^{\flat -}\\
3. & (\Rightarrow\phi^{\flat -}), (\Rightarrow\psi^{\flat -})\Rightarrow(\phi\wedge\psi)^{\flat -}\\
4. & (\Rightarrow(\phi\wedge\psi)^{\flat -})\Rightarrow \phi^{b-}\\
5. & (\Rightarrow(\phi\wedge\psi)^{\flat -})\Rightarrow \psi^{\flat -}\\
6. & (\Rightarrow \phi^{\flat -})\Rightarrow(\phi\vee\psi)^{\flat -}\\
7. & (\Rightarrow \psi^{\flat -})\Rightarrow(\phi\vee\psi)^{\flat -}\\
8. & (\Rightarrow (\phi\vee\psi)^{\flat -}),(\phi^{\flat -}\Rightarrow \inferatom{P}{b}{}),(\psi^{\flat -}\Rightarrow \inferatom{P}{b}{})\Rightarrow\inferatom{P}{b}{}\\
9. & (\Rightarrow\bot^{\flat -})\Rightarrow\inferatom{P}{b}{}\\
\end{array}
\]

With these changes, the following are proved as in \cite{sandqvist2015base}: 
\[
\begin{array}{ll}
(\dagger) & \mbox{for every $\phi\in \Gamma'$ and every $\baseB \supseteq \mathcal{N}$, $\Vdash_\baseB \phi^\flat$ iff $\Vdash_\baseB \phi$} \\
(\ddagger) & \mbox{for any $\mathfrak{P}$ and $\inferatom{P}{b}{}_\baseP$ if $\mathfrak{P}\vdash_\mathcal{N} \inferatom{P}{b}{}_\baseP$ then $\mathfrak{P}^\natural \vdash (\inferatom{P}{b}{}_\baseP)^\natural$}\\
\end{array}
\]

The final result follows from $(\dagger)$, $(\ddagger)$, and (\ref{comp3}): Assume $\Gamma\Vdash \phi$. For every $\baseB\supseteq\mathcal{N}$ such that $\Vdash_\baseB \psi^\flat$ for every $\psi^\flat\in\Gamma^\flat$, it follows by $(\dagger)$ that $\Vdash_\baseB \psi$ for every $\psi\in\Gamma$. So, since $\Gamma\Vdash\phi$, $\Vdash_\baseB \phi$ and, again by $(\dagger)$, $\Vdash_\baseB \phi^\flat$. It follows that $\Gamma^\flat \Vdash \phi^\flat$. By $(3)$, we get $\Gamma^{\flat -}\vdash_{\mathcal{N}} \phi^{\flat -}$ and so, by $(\ddagger)$, we can conclude $\Gamma\vdash\phi$.
\end{proof}

For the first-order logic of inferons, 
we extend the support relation as in 
Figure~\ref{fig:first-order-support}
(building on \cite{Gheorghiu2025-FirstOrder}), and 
of course consider the $(\mbox{Inferon})$ axiom with closed atomic predicates. 

\begin{figure}[ht]
\hrule
\[
\begin{array}{lrcl}     
(\mbox{At}) & \Vdash_\mathcal{B} \inferon{P}{P}{b}{}   & \mbox{iff} & 
       \mbox{$\vdash_{\mathcal{B} \,\cup\, \mathcal{P}} 
       \inferatom{P}{b}{}$ for closed  $P$} \\ 
\end{array}
\]
\dotfill \vspace{1mm}
\[
\begin{array}{lrcl}     
(\forall) &  \Vdash_\mathcal{B} \forall{x}.\phi & \mbox{iff} & 
     \mbox{$\Vdash_\mathcal{B} \phi[t/x]$ for all $t \in {\rm Cl}({{T}})$} \\
(\exists) & \Vdash_\mathcal{B} \exists{x}.\phi & \mbox{iff} & 
  \mbox{for any $\mathcal{C} \supseteq \mathcal{B}$ and all closed $\iota$, if} \\ 
  & & & \mbox{$\phi[t/x] \Vdash_\mathcal{C} \iota$, for all $t \in {\rm Cl}({T})$, then $\Vdash_\mathcal{C} \iota$} 
\end{array}
\]
\hrule
\caption{Support relation for inferons: first-order case}
\label{fig:first-order-support}
\end{figure}

Theorem~\ref{thm:prop-inferon-sound} (soundness) and 
Theorem~\ref{thm:prop-inferon-complete} (completeness) can be extended to the first-order case (see 
\cite{Gheorghiu2025-FirstOrder}), with 
the same adaptations to handle the atomic cases. 

 As \cite{Gheorghiu2025-FirstOrder} uses a  Hilbert-type axiomatic proof-system, this extension is not completely trivial. However, our treatment of quantifiers in the semantics is equivalent to that in \cite{Gheorghiu2025-FirstOrder} and so the differences between the two semantics reside  only in the treatment of the atomic cases, which we have treated and shown to be unproblematic in Theorem~\ref{thm:prop-inferon-sound} and Theorem~\ref{thm:prop-inferon-complete}.

% \note{Adapt Gheorghiu (and Prawitz)? Restrict to IPL and adapt Sandqvist (and Prawitz)? Other? } 

% \note{Main issue seems to be base-extension at the atomic support clause. How does this affect the    completeness construction?} 

\begin{remark}[Modalities] \label{remark:modalities}
Note that although we have discussed how agent modalities --- of the form $[a]$ and $\langle a \rangle$ --- can be understood in the setting of our logic of inferons and its base-extension semantics, we have not extended the 
soundness and completeness theorems to include them. Such an extension can be expected to draw upon the account of base-extension semantics for intuitionistic modal logics given in \cite{BuzokuPym-Modal2026}. \fillBox 
\end{remark}

Our basic notion of inferon --- what is intended when we refer to inferons --- is atomic in that the propositional component is taken to be a closed atom. As presented, the support relation supports logical combinations of  \emph{inferons}. For example, 
\[
\inferon{P}{P}{b}{} \;\wedge\; \inferon{Q}{Q}{c}{}
\]
However, we can allow connectives to occur within inferons provided the support relation preserves the associated base and polarity, For example, we can meaningfully write 
\[
\begin{array}{rcl}
\Vdash_\mathcal{B} \inferon{P_1 \wedge P_2}{P}{b}{} & \mbox{iff} & \mbox{$\Vdash_\mathcal{B} \inferon{P_1}{P}{b}{}$ and $\Vdash_\mathcal{B} \inferon{P_2}{P}{b}{}$} 
\end{array}
\]
and so construct \emph{compound inferons}. Thus the support relation then allows us to extend the inferon notation to incorporate the connectives, as in Figure~\ref{fig:support-internal}.   

\begin{figure}[ht]
\hrule 
\vspace{1mm}
 \[
    \begin{array}{r@{\quad}c@{\quad}l}
    \Vdash_\mathcal{B} \inferon{\phi_1 \wedge \phi_2}{P}{1}{} & \mbox{iff} & \Vdash_\mathcal{B} \inferon{\phi_1}{P}{1}{} \wedge \inferon{\phi_2}{P}{1}{} \\
    \Vdash_\mathcal{B} \inferon{\phi_1 \wedge \phi_2}{P}{0}{} & \mbox{iff} & \Vdash_\mathcal{B} \inferon{\phi_1}{P}{0}{} \vee \inferon{\phi_2}{P}{0}{} \\
    \Vdash_\mathcal{B} \inferon{\phi_1 \vee \phi_2}{P}{1}{} & \mbox{iff} & \Vdash_\mathcal{B} \inferon{\phi_1}{P}{1}{} \vee \inferon{\phi_2}{P}{1}{} \\
    \Vdash_\mathcal{B} \inferon{\phi_1 \vee \phi_2}{P}{0}{} & \mbox{iff} & \Vdash_\mathcal{B} \inferon{\phi_1}{P}{0}{} \wedge \inferon{\phi_2}{P}{0}{} \\
    \Vdash_\mathcal{B} \inferon{\phi_1 \supset \phi_2}{P}{1}{} & \mbox{iff} & \Vdash_\mathcal{B} \inferon{\phi_1}{P}{1}{} \supset \inferon{\phi_2}{P}{1}{} \\
    \Vdash_\mathcal{B} \inferon{\phi_1 \supset \phi_2}{P}{0}{} & \mbox{iff} & \Vdash_\mathcal{B} \inferon{\phi_1}{P}{1}{} \wedge \inferon{\phi_2}{P}{0}{} \\
    \Vdash_\mathcal{B} \inferon{\forall x.\phi}{P}{1}{} & \mbox{iff} & \Vdash_\mathcal{B} \forall x.\inferon{\phi}{P}{1}{}\\
    \Vdash_\mathcal{B} \inferon{\forall x.\phi}{P}{0}{} & \mbox{iff} & \Vdash_\mathcal{B} \exists x.\inferon{\phi}{P}{0}{}\\
    \Vdash_\mathcal{B} \inferon{\exists x.\phi}{P}{1}{} & \mbox{iff} & \Vdash_\mathcal{B} \exists x.\inferon{\phi}{P}{1}{}\\
    \Vdash_\mathcal{B} \inferon{\exists x.\phi}{P}{0}{} & \mbox{iff} & \Vdash_\mathcal{B} \forall x.\inferon{\phi}{P}{0}{}\\
    %... & \mbox{iff} & ... \\ 
    %... & \mbox{iff} & ... \\ 
    \end{array}
 \]   
\hrule
\caption{Compound inferons, with internal logical connectives} 
\label{fig:support-internal}
\end{figure}

\begin{lemma}[]
    For any compound inferon $\inferon{\phi}{P}{b}{}$, there exists a non-compound inferon $\psi$ such that, for any $\mathcal{B}$,
    \[
    \Vdash_\mathcal{B} \inferon{\phi}{P}{b}{} \quad\text{iff}\quad  \Vdash_\mathcal{B}\psi 
    \]
\end{lemma}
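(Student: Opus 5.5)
We argue by structural induction on the formula $\phi$ occurring inside the compound inferon $\inferon{\phi}{P}{b}{}$. The claim is proved simultaneously for both polarities $\mathfrak{b} \in \{0,1\}$. We read ``non-compound inferon'' as a formula of the logic of inferons built from basic inferons (closed atom, base, polarity) by the external connectives of Figure~\ref{fig:support2} and Figure~\ref{fig:first-order-support}. The map $\phi \mapsto \psi$ we construct is a translation $\tau_{\mathcal{P}}$ that pushes the base $\mathcal{P}$ and the polarity inwards, following the clauses of Figure~\ref{fig:support-internal}. Since those clauses are stated uniformly in $\mathcal{B}$, the biconditional will hold for every $\mathcal{B}$ at once, which is exactly what the statement requires.

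In the base case, $\phi$ is a closed atom $P$. Then $\inferon{P}{P}{b}{}$ is already a basic inferon, and we take $\psi$ to be this inferon itself, so the equivalence is trivial.

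In the inductive step for the binary connectives, we use the appropriate clause of Figure~\ref{fig:support-internal}. For instance, $\Vdash_\mathcal{B} \inferon{\phi_1 \supset \phi_2}{P}{0}{}$ holds iff $\Vdash_\mathcal{B} \inferon{\phi_1}{P}{1}{} \wedge \inferon{\phi_2}{P}{0}{}$. The induction hypothesis supplies non-compound $\psi_1,\psi_2$ with $\Vdash_\mathcal{C} \inferon{\phi_1}{P}{1}{}$ iff $\Vdash_\mathcal{C}\psi_1$ for \emph{every} base $\mathcal{C}$, and similarly for $\psi_2$. We then set $\psi = \psi_1 \wedge \psi_2$.

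The real content of this step is a congruence (replacement) property of the support relation. If $\chi_i$ and $\chi_i'$ are supported in exactly the same bases, then $\chi_1 \circ \chi_2$ and $\chi_1' \circ \chi_2'$ are supported in exactly the same bases. For $\wedge$ this is immediate from the clause $(\wedge)$. For $\supset$ it follows from $(\supset)$ together with (Inf), because (Inf) quantifies over extensions $\mathcal{C} \supseteq \mathcal{B}$. This is precisely why the induction hypothesis must be stated for all bases and not for a fixed $\mathcal{B}$. For $\vee$ the clause quantifies over all $\mathcal{C}\supseteq\mathcal{B}$ and all closed inferons $\iota$, with premisses of the form $\chi_j \Vdash_\mathcal{C} \iota$. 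Each such premiss unfolds via (Inf) into support conditions on $\chi_j$ at extensions of $\mathcal{C}$, so replacing $\chi_j$ by $\chi_j'$ again preserves the condition.

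The quantifier cases are handled in the same way using Figure~\ref{fig:first-order-support}. We translate $\inferon{\forall x.\phi}{P}{1}{}$ to $\forall x.\tau(\phi)$. By the induction hypothesis applied to each closed instance $\phi[t/x]$, which is a structurally smaller formula, $\tau(\phi)[t/x]$ is equivalent to $\inferon{\phi[t/x]}{P}{1}{}$. This requires checking that the translation commutes with substitution, $\tau(\phi)[t/x] = \tau(\phi[t/x])$, which is clear because $\tau$ acts only on logical structure and leaves terms untouched. The clause for $(\exists)$ then needs the same congruence argument as $\vee$.

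We expect the main obstacle to be this congruence lemma for the non-local clauses $(\vee)$ and $(\exists)$, since they refer to consequence over arbitrary extensions and arbitrary target inferons $\iota$. We would prove it first, as a separate claim: whenever two formulae are supported at exactly the same bases, they are interchangeable in any context of the logic of inferons. The proof is by induction on the context, with the $(\vee)$ and $(\exists)$ cases reduced to (Inf). Once this claim is established, the main induction is routine.
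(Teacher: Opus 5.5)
Your proposal is correct and follows the paper's own argument, which is an induction on the complexity of $\phi$ using the clauses of Figure~\ref{fig:support-internal}. What you add is exactly what the paper's one-line ``simple induction'' leaves implicit: the induction hypothesis stated for all bases, a simultaneous treatment of both polarities, the replacement property for $\supset$, $\vee$ and $\exists$ via (Inf), and the commutation of your translation with substitution (which also means the induction must be on complexity rather than on subformulas).
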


\begin{proof}
    This follows from a simple induction on the complexity of $\phi$ using the extension of the support relation in Figure~\ref{fig:support-internal}. 
\end{proof}

So far, we have considered just Boolean polarities in inferons, just as in classical situation theory. 
This is closely related to Rumfitt's conception of assertion and denial \cite{Rumfitt2000}, which is also employed 
by Gheorghiu and Buzoku \cite{GheorghiuBuzoku2025} in their treatment of base-extension semantics for classical propositional logic. Here, $\Vdash_\mathcal{B} \inferon{\phi}{P}{1}{}$ expresses that the property denoted by $\phi$ is, given its associated $\baseB$, supported by $\baseP$, whereas 
$\Vdash_\mathcal{B} \inferon{\phi}{P}{o}{}$ expresses that, given $\baseB$, the property denoted by $\phi$ 
is not supported by $\baseP$. 
It seems clear that richer notions of polarity will be of interest, especially in the context of richer logics. 

Finally, we remark that presentation of the given base-extension semantics for first-order readily 
generalizes to second-order logic \cite{GP2025-P-tS-S-oL}, offering the possibility of a more expressive logic of inferons.

% \note{Discuss parametric choice of logics, and so 
% of proof systems.} 

\medskip 

% \subsection{Bases and validity II} 

% \[
%  \dfrac{\qquad}{\langle p , \mathcal{P} , \mathfrak{b} \rangle} \quad
%  \dfrac{\langle p_1 , \mathcal{P}_1 ,\mathfrak{b}_1 \rangle \ldots 

%             \langle p_k , \mathcal{P}_k ,\mathfrak{b}_k \rangle}{\langle p , \mathcal{P} , \mathfrak{b} \rangle} \quad 
%             \dfrac{
%             \begin{array}{ccc} 
%             [\Gamma_1] & & [\Gamma_k] \\  
%             & \ldots & \\ 
%             \langle p_1 , \mathcal{P}_1 ,\mathfrak{b}_1 \rangle & & 
%             \langle p_k , \mathcal{P} ,_k \mathfrak{b}_k \rangle 
%             \end{array}}
%             {\langle p , \mathcal{P} , \mathfrak{b} \rangle} \quad \ldots 
% \]

% \medskip 

% \[
%     \begin{array}{lrcl} 
%        (\mbox{At}) & \Vdash_\mathcal{B} \langle p , \mathcal{P} , \mathfrak{b} \rangle   & \mbox{iff} & 
%        \vdash_{\mathcal{B}} 
%        \langle p , \mathcal{P} , \mathfrak{b} \rangle \\ 
%        (\wedge) & \Vdash_\mathcal{B} {\iota}_1 \wedge {\iota}_2  & \mbox{iff} & \mbox{$\Vdash_\mathcal{B} {\iota}_1$ and $\Vdash_\mathcal{B} {\iota}_2$} \\ 
%        (\vee) & & \mbox{iff} & \\ 
%     \end{array} 
% \]

\section{First Examples} \label{sec:first-examples}

Below are some examples that indicate the use of inferons and situations in modelling informational scenarios. 

% \note{We can do a better intro than this ... :-)}
Example~\ref{example:NoSmoke} shows how attunement to a constraint (aligning with the meaning of those words from situation theory) can, in some cases, be formalized as an inference rule linking inferonic atoms. When immersed in an inferential context supplying the premise of the rule, the rule can be used to infer the conclusion of the rule, as the antecedent of the constraint. This takes (sets of) inferonic atoms to characterize situation types. Moreover, the example shows the usefulness of the formulation of the At rule of Figure~\ref{fig:support2} in decomposing the inferential bases into a contextual part and a part associated to a particular inferon. Example~\ref{example:NoSmokeModality} shows how attunement can then captured by the $\agentd{a}$ modality.

Example~\ref{example:WiseMen} shows how agents can resolve epistemic puzzles using fixed inferential capability. Specifically, they do not need unbounded capability to consider possible states of the world, given public announcements. Justification of the base rules used by agents does require knowledge of the bases used by other agents. Alternative, modal-relational  approaches to epistemic logic are discussed elsewhere by Eckhardt and Pym \cite{EckhardtPym2025S5,EckhardtPym2024PAL}.

\begin{example}[{No smoke without fire}]
\label{example:NoSmoke}
We adapt a classic example discussed in situation theory \cite{Devlin1991}. Smoke is observed arising from a mountaintop, and an observer some distance away concludes that there is fire on the mountaintop. The observer cannot observe the fire directly, but is `attuned' to a constraint $\mathrm{Smoke} \Rightarrow \mathrm{Fire}$ that says that smokey-type situations entail firey-type situations.

In our set-up, we shall be concerned 
with --- in addition to an ambient inferonic base $\baseB$ containing the background description of the scenario, including the constraint --- 
two other inferonic bases. The first base, $\baseP_1$, describes immediate perceptions about the local frame of the observer. 
The second base, $\baseP_2$, contains facts about the remote frame (the mountaintop). %\note{What's $\baseB$? Need to mention the inferon a bit earlier for this to make sense? }
We consider atomic sentence $P_1$ to be an abbreviation for `smoke is observed in the local frame' and $P_2$ to mean `fire is present on the mountaintop'. Here, 
the base $\baseB$ contains a single rule 
\begin{equation}
\label{rule:SmokeFire}
    \dfrac{\inferatom{P_1}{1}{}} 
    {\inferatom{P_2}{1}{}} 
\end{equation} 
Suppose, moreover, that 
\begin{equation*}
\dfrac{}{\inferatom{P_1}{1}{}} 
\end{equation*}
is the only rule contained in base $\baseP_1$. 

Suppose that the observer uses base 
$\baseB \,\cup\, \baseP_1$, 
because they are attuned to the constraint and have observed smoke.
%We then have that 
%\begin{equation*}
%\dfrac{}{\inferatom{p_1}{1}{}}
%\qquad \mbox{and} \qquad 
% \dfrac{\inferatom{p_1}{1}{}} 
%    {\inferatom{p_2}{1}{}}
%\end{equation*}
%are both in $\mathcal{B} \cup \mathcal{P}_1$.  
It is then the case that we can derive 
$ %\begin{equation*}
    \inferatom{P_2}{1}{} 
$ %\end{equation*}
in the base $\baseB \cup \baseP_1$, and therefore 
$\Vdash_{\baseB} \inferonPlain{P_2}{\baseP_1}{1}{}$.
%\Vdash _\mathcal{B} \inferonSub{p}{2}{P}{1}{1}{}$.
%\begin{equation*} 
%\inferonSub{p}{2}{P}{1}{1}{} 
%\end{equation*} 
%is inferrable over $\mathcal{B}$, since according to the definition of support relation we have 
%$\Vdash _\mathcal{B} \inferonSub{p}{2}{P}{1}{1}{}$.

The key point here is that a reasoner with access to only $\baseP _1$ and $\baseB$ has  
$
\Vdash_{\baseB}
\inferonPlain{P_2}{\baseP_1}{1}{}
$, 
even without direct access to the reasoning resources, including basic facts (coded as atoms), of $\baseP _2$. 
%In one of the classic examples of situation theory, $\mathcal{P}_2$ regards events on a mountaintop, while $\mathcal{P}_1$ is about inferences that can be made by an observer far away. If smoke is seen coming from the mountain top ($p_1$), then an inference is made that there is fire on the mountaintop ($p_2$). The fire itself cannot be seen by the observer. The constraint formalizes the justification of the inference, in this case ruling out other possible causes of smoke.
The $\mathrm{At}$ rule allows for the decomposition between rules specific to the context $\baseP_1$, 
and rules formalizing context-independent facts, such as the rule for the constraint  (\ref{rule:SmokeFire}), within $\baseB$. 
%In this example, we have used this to separate the introduction of the fact $p_1$ from $\mathcal{P}_1$, from the rule (\ref{rule:SmokeFire}) of $\mathcal{B}$ such wherein the observer accepts this conditional inference. For example, if $a$ was in a different situation say $\mathcal{P}_1 '$, where smoke was not observed, then 
%\begin{equation*}
%\inferonSub{p}{2}{P'}{1}{1}{} 
%\end{equation*}
%would not be supported over $\mathcal{B}$. 
\fillBox
\end{example}

\begin{example}[{Inferential capability via modality}]
\label{example:NoSmokeModality}
In Example~\ref{example:NoSmoke}, the availability of the rule (\ref{rule:SmokeFire}), the constraint linking inferonic bases, was central to the observer drawing the conclusion of interest. Different agents may have different  capabilities, say, because they are attuned to different constraints. The agent modalities, $\agentd{a}$ and $\agentb{a}$, introduced in Section~\ref{sec:LogicalBasis} are supported using decomposition of a base into the part used by a specific, named, agent, and an ambient part. This allows for logical expressions that capture different agent capabilities.

We continue with the set-up from Example~\ref{example:NoSmoke}. 
Suppose that we have an agent $a$ that can reason with the inferonic base $\baseB$, so that  $\mathcal{B} \in {\bf Bases}(a)$. 
In that case,  
%\begin{equation*}
$
    \Vdash _{\emptyset} \agentd{a} (\inferonSub{P}{2}{\baseP}{1}{1}{}) 
$
%\end{equation*}
is an instance of the support relation. 

Suppose another agent $a'$ with, say, ${\bf Bases}(a') = \{ \mathcal{B} ' \}$, 
where $\mathcal{B} '$ includes no rules with a conclusion $\inferatom{P_2}{1}{}$.
We do not have 
%\begin{equation*}
$
    \agentd{a'}(\inferonSub{P}{2}{P}{1}{1}{})
$
%\end{equation*} 
supported over the empty basis. \fillBox
\end{example}

%\begin{itemize}
%\item[--] Can we get at Example 1 along lines something like:

%\[
%    \dfrac{\langle p_1 , 1 \rangle}{\langle p_2 , 1 \rangle} \in \mathcal{B}
%\]
%\item[--] And perhaps it's natural in this setting to go with using agency? 
%
%\[
%    \Vdash_\mathcal{B} \poss{a} \langle p , \mathcal{P} , \mathfrak{b} \rangle
%\]
%Idea being that the agent reasons about the remote location by adding additional capability. 
%\item[--] Hunch: the classic ST example is a bit like the 
%classic MC example --- gets away with a lot because of 
%the way it's set up. 
%\end{itemize}

\begin{example}[The wise men]
\label{example:WiseMen}
The following example is a variant of a well-known puzzle:
\begin{quote}
`A certain king wishes to test his three wise men $a_1$, $a_2$, $a_3$. $a_1$ can see both $a_2$ and $a_3$. Man $a_2$ can see only $a_3$. Man $a_3$ is blindfolded. They all know this to be the case. 
The king tells them that he will put
a white or black spot on each of their foreheads but that at least one spot will
be white. In fact, all three spots are white. He then repeatedly asks them, in turn (first man, second man, third man), ``Do
you know the color of your spot?'' What do they answer?'
\end{quote}

With agents $a_1$, $a_2$, $a_3$ who make sufficient use of information, the sequence should go `no', `no', `yes'. Critical to this are the inferences that are drawn from announcements that are themselves tied to inferences that cannot be made by agents earlier in the sequence. It assumes that the second man can draw an inference from an inference not made by the first ($a_2$ must be able to infer reasons why $a_1$ was unable to infer his own colour). It assumes that the second man can combine the result of this inference with an observation to draw an inference that he may not infer his own colour  ($a_2$ can see that $a_3$'s spot is white, and so $a_2$ cannot infer his own colour). $a_3$ draws the same inference as $a_2$ from $a_1$'s announcement. Man $a_3$ infers from $a_2$'s announcement that his own spot must be white, for $a_2$ did not infer his own colour. 

Our focus below is on agents reasoning about the use of available inferences of agents earlier in the chain, rather than on state-based reasoning as in many classical state-based accounts of these types of problems epistemic logic. 
We have chosen this particular puzzle to give a simple demonstration of how epistemic reasoning can be formalized within our present framework. More complex variants of this puzzle, and related ones, involve higher iterations of agent knowledge about the knowledge of other agents. More complex modal epistemic reasoning in base-extension semantics is explored in a separate work \cite{EckhardtPym2024PAL}.

%\begin{example}[The wise men: An oversimplified formalization]
%\label{example:WiseMenOversimplified}

We suppose a basic situation $\mathcal{P}_1$, $\mathcal{P}_2$, $\mathcal{P}_3$ for each wise man, relating to inferences that he can make using only the observations visually made of colour (for example, `if the other two have black spots then I have a white spot' and `he has a black spot'). Specifically, each $\mathcal{P}_i$ does not capture the inferences from inferences-or-not flowing from the announcements. The first man has access only to $\mathcal{P}_1$. The second man has access to $\mathcal{P}_2$. The third man has access only to $\mathcal{P}_3$. 

Let $W(a_n)$ be the proposition `Man $n$ has a white dot' for each $n=1,2,3$. 
An inferonic atom $\inferatom{W(a_n)}{1}{}$ is an assertion of this fact, and an inferonic atom $\inferatom{W(a_n)}{0}{}$ is a denial of it.
Let $P_n$ be the proposition `Man $n$ knows the colour of his own  dot'. 
An inferonic atom $\inferatom{P_n}{1}{}$ is therefore an affirmation of the content of this announcement (that $a_n$ knows his own colour), and an inferonic atom $\inferatom{P_n}{0}{}$ is a denial of it. 

\medskip 
\noindent{(The first man).} 
The first man, $a_1$, has his visual observations, which we formalize as inferences 
\begin{equation*}
\dfrac{}{\inferatom{W(a_2)}{1}{}} 
\qquad 
\dfrac{}{\inferatom{W(a_3)}{1}{}}
\end{equation*} 
in a base $\mathcal{P}_1^O$. 
The first man also has `static' rules
\begin{equation}
\label{rule:man1used}
\dfrac{\inferatom{W(a_2)}{1}{}}{\inferatom{P_1}{0}{}}
\qquad 
\dfrac{\inferatom{W(a_3)}{1}{}}{\inferatom{P_1}{0}{}}
\end{equation}
and 
\begin{equation}
\label{rule:man1unused}
    \dfrac{\inferatom{W(a_2)}{0}{} 
    \qquad
    \inferatom{W(a_3)}{0}{}}
    {\inferatom{P_1}{1}{}}
\end{equation}
in a base $\baseP_1 ^S$. Define 
$\baseP_1 = \baseP_1^O \cup \baseP_1^S$. 
We therefore have a derivation of $\inferatom{P_1}{0}{}$ over $\baseP_1$. We then have
%\begin{equation*}
$
\Vdash_{\baseP _1 ^O} \inferonSubSup{P}{1}{P}{1}{S}{0}{} 
$, 
%\end{equation*}
while $\inferonSubSup{P}{1}{P}{1}{S}{1}{}$ is not supported over $\baseP _1^O$. The rule (\ref{rule:man1unused}) which is not, in fact, applied when all agents are dotted white, is justified directly by consideration of the King's announcment.

%If we take $\mathcal{P}_1^S \in {\bf Bases}(a_1)$, then 
%\begin{equation*}
%    \Vdash _{\emptyset} \AgPoss{a_1} (\inferonSubSup{p}{1}{P}{1}{0}{0}{}) 
%\end{equation*}
%abstracts away from naming the particular basis used by $a_1$ to witness support of $\langle p_1 , \mathcal{P}_1 ^ O , 0 \rangle$

%\newcommand{\inferonSubSup}[7]{\llangle {#1}_{#2} , \mathcal{#3}_{#4}^{#5} , \mathfrak{#6}_{#7} \rrangle}
%\inferonSubSup{p}{1}{P}{1}{0}{0}{}

\medskip 
\noindent{(The second man).}  
The second man has rules 
\begin{equation*}
\dfrac{}{\inferatom{W(a_3)}{1}{}}
\qquad 
\dfrac{}{\inferatom{P_1}{0}{}}
\end{equation*}
relating to observations and announcements.  
Let these be elements of base $\mathcal{P}_2^O$.
%\begin{equation*}
%\dfrac{}{\inferatom{p_1}{0}{}}
%\end{equation*}
The second man has static rules 
\begin{equation}
\label{rule:TheRuleUsed}
\dfrac{\inferatom{P_1}{0}{} \qquad \inferatom{W(a_3)}{1}{}}{\inferatom{P_2}{0}{}} 
\end{equation}
and  
\begin{equation}
\label{rule:Man2RuleUnused}
\dfrac{\inferatom{P_1}{0}{} \qquad \inferatom{W(a_3)}{0}{}}{\inferatom{P_2}{1}{}} 
\end{equation}
and
\begin{equation}
\label{rule:Man2RuleUnused2}
    \dfrac{\inferatom{P_1}{1}{}}
    {\inferatom{P_2}{1}{}} .
\end{equation}
Let these static rules be 
in a base $\baseP _2^S$. 
Take $\baseP_2 = \baseP_2^O \cup \baseP_2^S$.
There is a derivation of $\inferatom{P_2}{0}{}$ 
over the base $\baseP_2$ using the first of the two rules from $\baseP _2^S$. 
There is the following instance of the support relation 
%\begin{equation*}
$
    \Vdash _{\mathcal{P}_2^O} 
    \inferonSubSup{P}{2}{P}{2}{S}{0}{}
$, 
%\end{equation*}
%so that if $a_2$ is immersed in inferential context $\mathcal{P}_2^O$, the denial of $P_2$ will be supported, because of the static rules from $\mathcal{P}_2^S$. However
but $\inferonSubSup{P}{2}{P}{2}{S}{1}{}$ 
is not supported over $\mathcal{P}_2^O$. 

Notice that support for a disjunction 
$\inferonPlain{W(a_2)}{\baseP _1 ^S}{1}{} \vee \inferonPlain{W(a_3)}{\baseP _1 ^S}{1}{}$ 
over $\baseP _2 ^O$ is not required. 

The 
rule (\ref{rule:TheRuleUsed}) can be justified by considering $\baseP _1$ with $\baseP _1 ^S$ fixed as above, but an unknown 
$\baseP _1 ^O$ that is a subset of 
$
\{ 
\Rightarrow \inferatom{W(a_2)}{0}{} , 
\Rightarrow \inferatom{W(a_2)}{1}{} , 
\Rightarrow \inferatom{W(a_3)}{0}{} , 
\Rightarrow \inferatom{W(a_3)}{1}{} 
\}
$.
Suppose 
$\Vdash _{\baseP _1 ^O} \inferonPlain{P_1}{\baseP _1 ^S}{0}{}$ and 
$\Vdash _{\baseP _1 ^O} \inferonPlain{W(a_3)}{\baseP _1 ^S}{1}{}$. 
%It must then be that $\vdash _{\baseP_1} \inferatom{W(a_2)}{1}{}$ or $\vdash _{\baseP_1} \inferatom{W(a_3)}{1}{}$ ...
However, the rules (\ref{rule:man1unused}) allow these support relation instances to hold for both 
$
\baseP _1 ^O = 
\{ 
\Rightarrow \inferatom{W(a_2)}{0}{} ,  
\Rightarrow \inferatom{W(a_3)}{1}{} 
\}
$
and 
$
\baseP _1 ^O = \{ 
\Rightarrow \inferatom{W(a_2)}{1}{}  , 
\Rightarrow \inferatom{W(a_3)}{1}{} 
\}
$. That is, one cannot infer either of  
$\inferatom{W(a_2)}{1}{}$ or $\inferatom{W(a_2)}{0}{}$.
%, but the latter is impossible assuming consistency (Definition~\ref{def:consistentInferonicBase}).
By contrast, for rule (\ref{rule:Man2RuleUnused}), if we have both   
$\Vdash _{\baseP _1 ^O} \inferonPlain{P_1}{\baseP _1 ^S}{0}{}$ and 
$\Vdash _{\baseP _1 ^O} \inferonPlain{W(a_3)}{\baseP _1 ^S}{0}{}$, then it must be the case that $\vdash _{\baseP _1} \inferatom{W(a_2)}{1}{}$. 
For rule (\ref{rule:Man2RuleUnused2}), if 
$\Vdash _{\baseP_1 ^O} \inferonPlain{P_1}{\baseP _1 ^S}{1}{}$ for unknown $\baseP _1 ^O$, then it must be the case that 
$\vdash _{\baseP _1} \inferatom{W(a_2)}{1}{}$. 
%Again, taking $\mathcal{P}_2^S \in {\bf Bases}(m_2)$, we can abstract away from mentioning the particular witnessing `static' part of the base 
%\begin{equation*}
%    \Vdash _{\emptyset} \agentd{a_2} 
%    (\inferonSubSup{p}{2}{P}{2}{0}{0}{})
%\end{equation*}

\medskip 
\noindent{(The third man).} 
The third man has observations from announcements 
\begin{equation*}
\dfrac{}{\inferatom{P_1}{0}{}}
\qquad
\dfrac{}{\inferatom{P_2}{0}{}}
\end{equation*}
in a basis $\mathcal{P}_2^O$. He has static rules
\begin{equation}
\label{rule:Man3RuleUsed}
    \dfrac{\inferatom{P_1}{0}{} \qquad \inferatom{P_2}{0}{}}{
    \inferatom{P_3}{1}{}
    }
\end{equation}
and 
\begin{equation}
\label{rule:Man3RuleUnused1}
    \dfrac{\inferatom{P_1}{1}{}}{\inferatom{P_3}{0}{}}
\end{equation}
and
\begin{equation}
\label{rule:Man3RuleUnused2}
    \dfrac{\inferatom{P_2}{1}{}}
    {\inferatom{P_3}{0}{}} 
\end{equation}
in base $\mathcal{P}_3^S$. 
Taking $\baseP _3 = \baseP_3^O \cup \baseP_3^S$, there is a derivation of $\inferatom{p_3}{1}{}$ over $\mathcal{P}_3$. We have that
\begin{equation*}
    \Vdash _{\mathcal{P}_3^O} 
    \inferonSubSup{P}{3}{P}{3}{S}{0}{}
\end{equation*}
but $\inferonSubSup{P}{3}{P}{3}{S}{1}{}$ 
is not supported over $\mathcal{P}_3^O$. 

In particular, rule (\ref{rule:Man3RuleUsed}) can be justified by reasoning about the second man's available inferences. If 
$\Vdash _{\baseP _2 ^O} \inferonPlain{P_2}{\baseP_2 ^S}{0}{}$ where $\baseP_2 ^S$ is as above, but $\baseP _2 ^O$ is an unknown subset of 
$
\{ 
\Rightarrow \inferatom{P_1}{1}{} , 
\Rightarrow \inferatom{P_1}{0}{} , 
\Rightarrow \inferatom{W(a_3)}{1}{} , 
\Rightarrow \inferatom{W(a_3)}{0}{}
\}
$, 
then it must be that 
$\vdash _{\baseP_2} \inferatom{W(a_3)}{1}{}$. 
The rule (\ref{rule:Man3RuleUnused2}) is also justified by considering $\Vdash _{\baseP _2}$ for unknown $\baseP _2 ^O$, while the rule (\ref{rule:Man3RuleUnused1}) can be justified by considering $\Vdash_{\baseP _1}$ for unknown $\baseP _1 ^O$ but known $\baseP _1 ^S$.
\fillBox 
\end{example}

%The account given in Example~\ref{example:WiseMenOversimplified} is rather shallow: it simply encodes in the bases everything we needed to arrive at the correct sequence of `announcements'.   What is more interesting is why these are appropriate bases. We note the following:
%\begin{itemize}
%    \item[--] Rule~(\ref{rule:man1used}) in $\mathcal{P}_1^S$ are only correct because the first man has \emph{only} observations from the visual inspection of the other two men
%    \item[--] The second man has only observation of the third man, plus what he can infer from announcement by the first man, and this is what makes rule (\ref{rule:TheRuleUsed}) correct
%    \item[--] If second man understood the base used by the first man,  then he could justify (\ref{rule:TheRuleUsed})
%    \item[--] In this situation, the second man does not use the rule (\ref{rule:Man2RuleUnused})
%    \item[--] If the third man understood the base used by the second man, specifically including (\ref{rule:Man2RuleUnused}), and also the base used by the first man, then he could justify his rule (\ref{rule:Man3RuleUsed}). 
%\end{itemize}
%This suggests that each the second and third men need to reason about the bases, or the support relation, of other men. 

\section{Inferomorphisms and Information Flow} \label{sec:morphisms-flow}

As we have previously mentioned, in Section~\ref{sec:situations}, in order to produce a simpler treatment of information flow, Barwise, Seligman and others developed the theory of classifications and infomorphisms, under the name `The Logic of Distributed Systems' which we abbreviate as LDS. The standard introductory reference for this is the monograph \cite{BarwiseSeligman1997}. Before proceeding to develop our inferentialist counterpart to these ideas, we summarize some background on LDS 
from \cite{BarwiseSeligman1997}. 

 A \emph{classification} is a structure $\mathbf{A} = \langle \Sigma _\mathbf{A} , A , \vDash _{\mathbf{A}} \rangle$, where $A$ is a set (of `tokens'), $\Sigma _\mathbf{A}$ is a set of `types', and 
 $\vDash \ \subseteq A \times \Sigma_{\mathbf{A}}$ 
 is a binary relation. We think of a relational instance $a \vDash _\mathbf{A} \alpha$ as saying that the token $a$ is classified as being of type $\alpha$. Classification abstracts from the support relation of situation theory, where there are  situations classified $s \vDash T$ by types $T$, including the type $T$ of situations that satisfy a particular infon: $T = [ \dot{s} \mid \dot{s} \vDash \sigma ]$. 

An \emph{infomorphism} $f:\mathbf{A} \longrightarrow \mathbf{C}$ from classifcation $\mathbf{A}$ to classification $\mathbf{C}$ consists of a pair of functions 
$f^\wedge : \Sigma _\mathbf{A} \longrightarrow \Sigma _\mathbf{C}$, 
$f^\vee  : C \longrightarrow A$
%\begin{align*}
%f^\wedge &: \Sigma _\mathbf{A} \longrightarrow \Sigma _\mathbf{C}\\
%f^\vee   &: C \longrightarrow A  
%\end{align*}
such that the Chu condition (see \cite{BarwiseSeligman1997} for the background) holds:
\begin{equation}
\label{eq:Chu}
%\tag{Chu}
    f^\vee(c) \Vdash_\mathbf{A} \alpha \mbox{ iff } c \Vdash_\mathbf{C} f^\wedge(\alpha)
\end{equation}
for every $c \in C$ and $\alpha \in \Sigma_\mathbf{A}$. 

 Just as in situation theory, constraints are still viewed as the primary concept to analyse information flow, but general situated propositions are replaced by assertions about tokens being assigned to classes, while (logical) constraints themselves are understood as arising from certain configurations of the concept of infomorphism known as `channels'. We remark that Mares \cite{Mares1996} and Restall \cite{Restall1996} discuss  how constraints can be captured model-theoretically in the Routley--Meyer ternary-relation semantics for relevance logic. %Related ideas are explored by Restall \cite{Restall1996-IFRL}.  

A \emph{channel} is an indexed family 
$( f_i : A_i \longrightarrow C \mid i \in I)$ 
of infomorphisms, where each member of the family has the same target, $C$, called the \emph{core} of the channel. Channels provide a means to identify distributed systems. A system, in a particular configuration/state, is identified by a single $c \in C$. The distributed components are the $a_i = f_i^\vee (c) \in A_i$. Components $a_i$ and $a_j$ are \emph{connected} if there is some $c$ such that $a_i = f^\vee (c)$ and $a_j = f^\vee (c)$.

Later, in Section~\ref{sec:flow}, we shall apply the tools we develop herein to a quite generic notion of distributed system that, while consistent with the 
ideas of \cite{BarwiseSeligman1997}, derives from quite practical modelling concerns (see, for example, \cite{CIP2022,ICP2025} and the references therein).  

For an example of our present set-up, we can imagine a flashlight as a system consisting of a bulb and a switch (eliding other relevant components). We can imagine a domain $S$ of switches and a domain $B$ of bulbs. Taking $F$ to be the domain of flashlights, each flashlight connects a unique switch and a unique bulb. This provides mappings $f_S ^\vee : F \longrightarrow S$ and 
$f_B ^\vee :F \longrightarrow B$. An important type for bulbs is $\mathrm{LIT}$, those that are showing light, and we can imagine this to be recognizable also for flashlights, $f_B ^\wedge (\mathrm{LIT})$. Similarly, for switches, we have a type $\mathrm{ON}$ and for flashlights $f_S ^\vee (\mathrm{ON})$. 

For any classification $A$, a \emph{sequent} of $A$ is a pair $\langle \Gamma , \Delta \rangle$ of sets of types of $A$. A token $a \in A$ \emph{satisfies} a sequent $\langle \Gamma , \Delta \rangle$ if $a \vDash _A \alpha$ for every $\alpha \in \Gamma$ implies that $a \vDash _A \alpha '$ for some $\alpha ' \in \Delta$. The theory $\mathrm{Th}(A)$ is the set of all sequents that are satisfied by all $a \in A$.

The central proposal for the notion of information flow now goes quite naturally. Suppose a channel that is made from infomorphisms including $f:A \longrightarrow C$ and $g:B \longrightarrow C$ over the core $C$. Suppose that $a$ is a token of type $A$, $\alpha$ is a type of $A$, $b$ is a token of type $B$, and $\beta$ is a type of $B$. Then, relative to $C$, we say $a \vDash _A \alpha$ \emph{carries the information} that $b\vDash _B \beta$ if $a$ and $b$ are connected and $\langle f^\wedge( \alpha) , g^\wedge (\beta) \rangle$ is in $\mathrm{Th}(C)$.

\subsection{Inferomorphisms} 
\label{subsec:pred-inferomorphisms}

We now develop our inferentialist analysis, taking inspiration from the methodology of LDS.
%Recall that in the channel approach, a way of identifying which components were attached to which, by which connections, was required, and the solution involved the tokens of the channel core. A similar issue arises when using inferomorphisms, and so we move to a logical setting in which we can discuss individuals. Having made this move, it becomes natural to allow use of variables ranging over individuals and quantifiers, in the language of inferons, over such variables. 
Our target remains to produce an analysis of  information flow in distributed systems. To that end, we need to be able to identify and reason about definite components of the system. We use a first-order language, and instead of taking individual components to be elements from domains that are sets, individuals must simply be identified as equivalence classes of closed terms under the equality relation. 

%\notemc{(MC) I think we might get further if we have many-sorted logic. We want individuals of one sort represented by (equivalence classes) of terms of that sort. It gets messy with partial functions otherwise, and examples become cumbersome. Has many-sorted logic been done in BES?}

%This allows us to do is to replace the unanalysed atoms $p$ contained in inferons with relational instances from the term language of a first-order language. This allows us to consider a notion of inferomorphism that allows also for analysis with respect to  transfer of closed terms (as representations of `modelled' individuals) alongside, but in the opposite direction to, transfer of relation symbols. This is similar to the infomorphism definition. 

We use the notational conventions from Figure~\ref{fig:notation}. In particular, terms $t$ are generated from: individual constants $a,b,c,$ \ldots; individual variables $x,y,z$ \ldots;
fixed arity relation/predicate symbols $P$, $Q$, $R$ \ldots for various arities. 
Recall that $\setofterms$ is the set of terms, and $\cl{\setofterms}$ is the set of closed terms of this language. 
For each natural number $n$, let $\mathrm{Pred}_n$ be the set of $n$-ary relation symbols of the language.

% \notemc{There is a notational inconsistency between Figures 2 and 3 on the name of the set of terms: $T$ or $\mathcal{T}$.}

Below we write expressions for terms that are potentially undefined. We use the Kleene-equality symbol below: for two such expressions $t_1$ and $t_2$, we write $t_1 \simeq t_2$ if $t_1 = t_2$ whenever $t_1$ and $t_2$ are both defined, and $t_1$ is defined if and only if $t_2$ is defined.

\begin{definition}
A \emph{pre-inferomorphism $f: \mathcal{P} \longrightarrow \mathcal{P'}$ over $\mathcal{B}$} consists of a partial function
\begin{equation*}
    f^\vee: \cl{\setofterms} \longrightarrow \cl{\setofterms}
\end{equation*}
and a dependent function
\begin{equation*}
    f^\wedge : \prod _{n \in \mathbb{N}}:( \mathrm{Pred}_n \longrightarrow  \mathrm{Pred}_n) .
\end{equation*}
The partial function $f^\vee$ is required to have a substitution property: if $t_1 = t_2$, then $f^\vee (t_1) \simeq f^\vee (t_2)$. The pair of functions are required to satisfy the condition below:
    \begin{equation}
    \label{eq:multiChu}
        \begin{array}{rcl}
        \vdash_{\mathcal{C} \cup \mathcal{P}'}
        \inferatom{R(f^{\vee}(t_1) , \ldots , f^{\vee}(t_n))}{b}{}  
        & \mbox{iff} & 
        \vdash_{\mathcal{C} \cup \mathcal{P}} 
        \inferatom{f^{\wedge}(n)(R)(t_1 , \ldots , t_n)}{b}{} 
        \end{array}
    \end{equation}
    for every $\mathcal{C} \supseteq \mathcal{B}$,  every $n\geq 0$, every relation symbol $R$ of arity $n$, and every sequence of closed terms $t_1 , \ldots t_n$ of length $n$.
\fillBox
\end{definition}

Given a pre-inferomorphism $f:\mathcal{P} \longrightarrow \mathcal{P}'$ over $\baseB$, we define:
\[
\begin{array}{rcll}
     \inferon{R(t_1 , \ldots , t_n)}{P}{b}{}^{f , \vee} 
     & \simeq &  
    \inferon{R(f^\vee (t_1) , \ldots , f^\vee (t_n) )}{P}{b}{}
    & \\  
%2nd
     \inferon{R(t_1 , \ldots , t_n)}{P''}{b}{}^{f , \vee} 
     & = &  
    \inferon{R(t_1 , \ldots , t_n)}{P''}{b}{} 
     & 
    \mbox{for $\baseP'' \notequiv \baseP$}   \\
%3rd
 \inferon{R(t_1 , \ldots , t_n)}{P}{b}{}^{f , \wedge} & = & 
    \inferon{(f^\wedge (R))(t_1 , \ldots , t_n)} {P'}{b}{} \\
% 4th
 \inferon{R(t_1 , \ldots , t_n)}{P''}{b}{}^{f , \wedge} & = & \inferon{R(t_1 , \ldots , t_n)}{P''}{b}{} 
 & \mbox{for $\baseP'' \notequiv \baseP$}  
\end{array}
\]
and then for $\dagger \in \{ \wedge , \vee \}$, indicating the components of the morphism, we take 
\[
\begin{array}{rcll}    
    (\phi_1 \circ \phi_2) ^{f,\dagger} &= &
    (\phi_1 ^{f,\dagger}) \circ (\phi_2 ^{f,\dagger}) &
    \mbox{ for } \circ \in \{ \wedge , \vee , \supset \} \\
    \bot ^{f , \dagger} & = & \bot & \\
    \Theta ^{f, \dagger } & = & \{ \psi ^{f,\dagger} \mid \psi \in \Theta \} & \\
    (\Theta  \Vdash_\baseB \phi) ^{f,\dagger} 
    & = & (\Theta ^{f,\dagger}) \Vdash_\baseB (\phi^{f, \dagger}) .
\end{array}
\]
In particular, this defines a function $\iota \mapsto \iota^{f, \wedge}$ and a partial function $\iota \mapsto \iota^{f, \vee}$ on inferons. 

\begin{definition}
    A pre-inferomorphism is a \emph{quasi-inferomorphism} if $\iota \mapsto \iota^{f, \vee}$ and $\iota \mapsto \iota^{f, \wedge}$ are surjective. \fillBox
\end{definition}

%For intuition and comparison with the infomorphism concept based on the unary-relation case, think of $R$ as a relation interpreted on some `model' consisting of some set $A$ of  terms (closed under equality) and $t_1$ \ldots $t_n$ as denoting individuals in another such`model' $B$; we then have each of individuals $f^{\vee}(t_i)$ in $A$ and relation $f^{\wedge}(n)(R)$ on $B$. Under this convention,  $f^\wedge$ travels in the same direction as $f$, while $f^\vee$ travels in the opposite direction. At the same time, there is a substitution of the base.
%
%Condition~(\ref{eq:multiChu}) is an adaptation of the Chu condition (\ref{eq:Chu}) for informorphisms. It takes this indexed form because we are no longer only interested in `types' that are, essentially, unary predicates. 
 
For every $\mathcal{B}$, there is a category with objects the elements of the basis $\baseMill$  and morphisms the inferomorphisms over $\mathcal{B}$. Note that $(fg)^\vee = g^\vee \circ f^\vee$ and $(fg)^\wedge = f^\wedge \circ g^\wedge$ for $f: \mathcal{P} \longrightarrow \mathcal{P}'$ and $g:\mathcal{P}' \longrightarrow \mathcal{P''}$.
The following substitution result can then be shown.

%\notemc{It also works with $\AgPoss{a}$.}
\begin{proposition}
\label{result:InferonChuEquivalence}
Suppose the first-order language of inferons and base-extension semantics as set out in Figure~\ref{fig:support2}. Let $f: \mathcal{P} \longrightarrow \mathcal{P}'$ be a quasi-inferomorphism over $\mathcal{B}$. Let $\phi$ be a formula and $\Theta$ be a finite set of formul{\ae}. Then 
    \begin{equation}
    \label{equation:InferonChuEquivalence}
        ( \Theta \Vdash _\mathcal{B} \phi)^{f , \vee} \mbox{ iff }
        ( \Theta  \Vdash _\mathcal{B} \phi )^{f, \wedge}
    \end{equation}
\end{proposition}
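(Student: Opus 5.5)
The strategy is a structural induction proving a stronger, base-parametric claim. For every $\mathcal{C} \supseteq \mathcal{B}$ and every formula $\phi$ for which $\phi^{f,\vee}$ is defined,
\[
\Vdash_\mathcal{C} \phi^{f,\vee} \quad\mbox{iff}\quad \Vdash_\mathcal{C} \phi^{f,\wedge}.
\]
Call this $(\ast)$. Once $(\ast)$ holds for all formul{\ae} and all extensions of $\mathcal{B}$, the proposition follows from the (Inf) clause. By definition, $(\Theta \Vdash_\mathcal{B} \phi)^{f,\dagger}$ is $\Theta^{f,\dagger} \Vdash_\mathcal{B} \phi^{f,\dagger}$. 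If $\Theta$ is nonempty, this unfolds to a statement quantifying over all $\mathcal{C} \supseteq \mathcal{B}$, and $(\ast)$ lets us exchange each $\psi^{f,\vee}$ for $\psi^{f,\wedge}$ pointwise, including the conclusion. If $\Theta$ is empty, $(\ast)$ at $\mathcal{C} = \mathcal{B}$ suffices. Because $(\ast)$ is quantified over all $\mathcal{C} \supseteq \mathcal{B}$, it survives the base-extensions hidden in the ($\vee$), ($\exists$) and (Inf) clauses.

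The base case is the one that uses the Chu-type condition (\ref{eq:multiChu}). Take a basic inferon $\inferon{R(t_1,\ldots,t_n)}{P}{b}{}$ whose local base is $\mathcal{P}$. By (At), the $\vee$-side asks whether $\vdash_{\mathcal{C}\cup\mathcal{P}}\inferatom{R(f^\vee(t_1),\ldots,f^\vee(t_n))}{b}{}$, and the $\wedge$-side asks whether $\vdash_{\mathcal{C}\cup\mathcal{P}'}\inferatom{f^\wedge(R)(t_1,\ldots,t_n)}{b}{}$. Condition (\ref{eq:multiChu}) relates exactly such pairs for every $\mathcal{C}\supseteq\mathcal{B}$. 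I expect some care in checking that the translation clauses attach $\mathcal{P}$ and $\mathcal{P}'$ in the orientation (\ref{eq:multiChu}) requires, possibly reading the equivalence through its symmetric form. For basic inferons whose local base $\mathcal{P}''\not\equiv\mathcal{P}$, both translations are the identity, so the case is trivial. The $\bot$ case is also trivial, since $\bot^{f,\dagger}=\bot$.

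The inductive steps for $\wedge$ and $\supset$ are immediate from the clauses of Figure~\ref{fig:support2}, since both translations commute with the connectives; $\supset$ reduces to (Inf) and so needs the induction hypothesis at all extensions. For $\forall$ and $\exists$ (Figure~\ref{fig:first-order-support}), I would use the substitution property of $f^\vee$ to commute $[t/x]$ with the $\vee$-translation up to equality of terms.

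The main obstacle is the $\vee$ and $\exists$ cases, and also $\bot$ if it is read as quantifying over inferons. These clauses quantify over \emph{all} closed inferons $\iota$, and $\iota$ is not itself translated. To compare the two sides, I need every $\iota$ occurring on one side to be matched by an inferon on the other. This is exactly where surjectivity, the defining property of a quasi-inferomorphism, enters. I would prove the $\vee$ case by writing each arbitrary $\iota$ as $\iota_0^{f,\vee}$, or as $\iota_0^{f,\wedge}$, for some $\iota_0$, and then applying the induction hypothesis to the hypotheses $\phi_i \Vdash_\mathcal{C} \iota$. If matching the eliminated atom through the translation proves too delicate, the fallback is to strengthen $(\ast)$ to sequents $\Theta\Vdash_\mathcal{C}\phi$ with atomic right-hand side, and to run the induction on that form simultaneously.
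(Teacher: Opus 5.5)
Your proposal is correct and follows the paper's proof. Both arguments strengthen the claim to all $\mathcal{C}\supseteq\mathcal{B}$ and settle the atomic case with condition (\ref{eq:multiChu}). On the orientation you flagged: the paper's own (At) step also reads that condition with $\mathcal{P}$ on the $f^\vee$ side and $\mathcal{P}'$ on the $f^\wedge$ side, opposite to the definition as printed, so there is no symmetric form to appeal to. Both then use surjectivity of $\iota\mapsto\iota^{f,\wedge}$ and of $\iota\mapsto\iota^{f,\vee}$ for the two directions of the $\vee$ and $\exists$ cases. The only difference is organisational: you prove the single-formula statement and recover the sequent form from (Inf), whereas the paper inducts directly on pairs $(\Theta,\phi)$ using its complexity measure $\gamma$.
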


\begin{proof}
We show the stronger result that     
$( \Theta \Vdash _\baseC \phi)^{f , \vee}$ if, and only if 
$( \Theta  \Vdash _\baseC \phi )^{f, \wedge}$,
for all $C \supseteq \mathcal{B}$. 

We use a function $\gamma$ that defines complexity $\gamma(\Theta , \phi)$ of pairs $(\Theta , \phi )$. First, define (overloading the symbol $\gamma$) complexity of a formula: 
\[
   \begin{array}{rcl}
    \gamma ( \iota  ) & = & 1\\ 
     \gamma(\bot) & = & 1 \\
     \gamma (\phi \circ \phi') & = & 1+ \gamma (\phi) + \gamma (\phi') \mbox{ for } \circ \in \{ \wedge, \supset , \vee \} \\
     %\gamma (\phi \supset \phi') & = & \gamma (\phi) + \gamma (\phi') + 1 \\
     %\gamma (\phi\vee \phi') & = & \gamma (\phi) + \gamma (\phi') + 1 \\
 %    \gamma(\AgentPoss{a} \phi) & = & 1 + \gamma ( \phi ) \\
     \gamma (\forall x . \phi) & = & 1 + \gamma ( \phi) \\
     \gamma (\exists x . \phi) & = & 2 + \gamma ( \phi ) . % Need `2+...' for the induction
    \end{array}
\]
From there, extend this to complexity of pairs $(\Theta , \iota)$:
\begin{equation*}
        \gamma ( \Theta , \iota ) = 1+ \max ( \{ \gamma( \iota ) \} \cup \{ \gamma( \kappa ) \mid \kappa \in \Theta \} ) .
 \end{equation*}
%The map $F$ preserves complexity: $\gamma(F(\iota)) = \gamma (\iota )$ for all $\iota$.

  The proof works by induction on complexity of pairs $(\Theta , \phi)$ that appear in support relation instances for some base, with $\Theta$ on the left of $\Vdash$ and $\phi$ on the right. The induction hypothesis is that for all $(\Theta ' , \phi ')$ less complex than $(\Theta , \phi)$, and all bases 
 $\mathcal{C} \supseteq \mathcal{B}$, that 
    \begin{equation*}
    \label{eq:InferonEquivalenceIH}
        (\Theta' \Vdash_\mathcal{C} \phi')^{f,\vee} 
        \quad\mbox{iff}\quad 
        (\Theta' \Vdash_{\mathcal{C}} \phi ' ) ^{f,\wedge}
    \end{equation*}
    Given the induction hypothesis, we show that (\ref{equation:InferonChuEquivalence}) holds. 

\medskip
 ($\bot$) Immediate, since $\bot^{f,\vee} = \bot = \bot^{f, \wedge}$ and $\Theta = \emptyset$

\medskip
 (At) 
 We have $\Theta = \emptyset$. We take $\phi  ' = \iota = \langle R(t_1 , \ldots , t_n) , \baseP'', \mathfrak{b} \rangle$. In the case where $\baseP'' \notequiv \baseP$ the result is trivial because the two operations, $()^\vee$ and $()^\wedge$ make no change to the inferon.

In the case where $\baseP'' = \baseP$, we have: 
\[
\begin{array}{rcl}
\Vdash _{\mathcal{C}} \inferon{R(t_1 , \ldots , t_n)}{P}{b}{}^{f, \vee} 
    & \mbox{iff} & 
\Vdash _{\mathcal{C}} \inferon{R(f^\vee( t_1) , \ldots ,f^{\vee} (t_n))}{P}{b}{} \\
    & \mbox{iff} & \vdash_{\mathcal{C} \cup \mathcal{P}} 
\inferatom{R(f^\vee(t_1) , \ldots , f^{\vee} (t_n))}{b}{} \\
& \mbox{iff} & \vdash _{\mathcal{C} \cup \mathcal{P}' } \inferatom{(f^{\wedge}(n)(R))( t_1 , \ldots t_n)}{b}{} \\ & \mbox{iff} & 
\Vdash_{\mathcal{C}} \inferon{(f^{\wedge}(n)(R))(t_1 , \ldots t_n )}{P'}{b}{} \\
& \mbox{iff} &  \Vdash _{\mathcal{C}} \inferon{R(t_1 , \ldots , t_n)}{P}{b}{}^{f , \wedge}
\end{array}
\]

\medskip 
($\wedge$, $\supset$, $\forall x$) Straightforward use of the induction hypothesis, noting that the `only if' and `if' directions of the proof rely, respectively, only on the same direction (`only if', `if') in the induction hypothesis. For $\supset$, the fact that $\gamma (\phi_1, \phi_2) < \gamma (\phi_1 \supset \phi _2)$ is used.

\medskip
(Inf and $\vee$) Straightforward application of the induction hypothesis, noting that both the `if' and `only if' directions in the hypothesis are required to prove each of the `if' and `only if' directions. For the case of ($\vee$), surjectivity of $\iota \mapsto \iota ^{f,\wedge}$ is used in showing the `only if' direction, while surjectivity of $\iota \mapsto \iota ^{f,\vee}$ is used in showing the `if' direction. 

\medskip
($\exists x)$ The `only if' direction uses the surjectivity of $\iota \mapsto \iota ^{f,\wedge}$, and the `if' direction uses the surjectivity of $\iota \mapsto \iota^{f, \vee}$. Both directions use both of the `if' and `only if' directions of the induction hypothesis.
\end{proof}

%\notemc{Compare this result to f-Intro and f-Elim rules from B and S... section 2.3}

A \emph{stock} $(M, \baseP)$ consists of a base $\baseP$ and a set of closed terms $M$ that is closed under equality. 
%A stock $(M,\baseP)$ features the set $M$ containing possible components of a larger system, and a base $\baseP$ that may be used to support facts about elements of $M$.
Recall that the \emph{domain} of a partial function is the set of elements where it is defined.
\begin{definition}
Let $(M, \baseP)$ and $(N , \baseP ')$ be stocks. 
An \emph{inferomorphism 
$f:(N,\baseP) \longrightarrow (M, \baseP')$ over $\baseB$} is a quasi-inferomorphism $f:\baseP  \longrightarrow \baseP'$ over $\baseB$ such that 
\begin{itemize}
    \item[--] the domain of the partial function $f^\vee$ is equal to $M$
    \item[--] $f^\vee$ defines a function from $M$ to $N$.
\end{itemize} \vspace{-5mm}  \fillBox
\end{definition}
Stocks and inferomorphisms over $\baseB$ form a category. 

\begin{definition}
Let $(L,\mathcal{P})$ be a stock. 
A \emph{channel over $\baseB$ with core $(L,\mathcal{P})$} is a family  
\[
(
f_i: (M_i,\base P_i) \longrightarrow (L,\mathcal{P})
\mid i \in I )
\]
of inferomorphisms over $\baseB$.  
Two terms $t_i \in M_i$ and $t_j \in M_j$ are \emph{connected} in the channel if there is $t \in L$ such that $t_i = f^\vee (t)$ and $t_j = g^\vee (t)$. \fillBox
\end{definition}

For a channel as above, we can think of an element $t \in L$ as identifying a particular system that is made up of connected components from the family $( f _i ^\vee (t) \in M_i \mid i \in I)$. The base $\baseP _i$ provides supports some facts about elements of $M_i$. The base $\baseP$ supports facts about the entire system. In particular, it will allows us to compare and combine sentences of the forms $f_i ^\wedge ( \phi _i )$ and $f_j ^\wedge ( \phi _j )$ where $\phi _i$ is a sentence supported by $\baseP _i$ and $\phi _j$ is a sentence supported by $\baseP _j$.

\begin{definition}
\label{def:InferonCarriesInferon}
Suppose a channel with core $(L,\baseP'')$ and with inferomorphisms $f:(M,\baseP) \longrightarrow (L,\baseP'')$ and $g : (N,\baseP') \longrightarrow (L,\baseP'')$ over 
$\baseB$. Let $R$ and $S$ be unary predicates. Let $t \in M$ and $t' \in N$ be terms.  Relative to the channel, we say $\Vdash_\base{B} \inferon{R(t)}{P}{b}{}$ \emph{carries the information that}  
$\Vdash_\baseB \inferon{S(t')}{P'}{b}{}$ if 
%\begin{itemize}
%\item 
$t$ and $t'$ are connected by the channel
%\item 
for all terms $t'' \in L$, and 
\begin{equation*}
\inferon{(f^\wedge (R)(t'')}{P''}{b}{} 
\Vdash_\baseB 
\inferon{(g^\wedge (S))(t'')}{\baseP''}{b}{} \vspace{-5mm}
\end{equation*} \fillBox
%\end{itemize}
\end{definition}

Suppose that we have a channel as in Definition~\ref{def:InferonCarriesInferon} and that $t = f^\vee(t'') \in M$ and $t' = g^\vee (t'') \in N$ for some $t'' \in L$. 
Considering Proposition~\ref{result:InferonChuEquivalence}, if 
$\Vdash_\base{B} \inferon{R(t)}{P}{b}{}$, then 
$\Vdash _\base{B} \inferon{(f^\wedge (R)(t'')}{P''}{b}{}$. If $\Vdash_\base{B} \inferon{R(t)}{P}{b}{}$ carries the information that  
$\Vdash_\baseB \inferon{S(t')}{P'}{b}{}$, then by Inf-clause in Figure~\ref{fig:support1} of the support relation it is the case that $\Vdash_\baseB 
\inferon{(g^\wedge (S))(t'')}{\baseP''}{b}{}$. Applying Proposition~\ref{result:InferonChuEquivalence} again, we find $\Vdash_\baseB \inferon{S(t')}{P'}{b}{}$. \fillBox

\begin{example}
 This example is adapted from an example regarding infomorphisms and classifications   \cite{BarwiseSeligman1997}. We can think of a flashlight as a system consisting of a switch and a bulb. Regard stock $(L,\mathcal{P''})$, modelling flashlights (by providing a set of terms identifying  flashlights and a base of facts about them), as a way of describing the interface between $(M,\mathcal{P})$, modelling switches, and $(N,\mathcal{P}')$, modelling bulbs. In particular, $\mathcal{P}''$ will support constraints that link the state of the bulb to the state of the switch. A closed term $t'' \in L$ determines both $t= f^\vee (t'') \in M$, a particular switch, and $t' = g^\vee (t'') \in N$, a particular bulb. The term $t''$, as the whole flashlight, represents the particular connection, so which switch is connected to which bulb and how they are connected.

We consider the relationship between inferons 
%\begin{equation*}
$
\inferon{\predON(x)}{P}{1}{}   
$
%\end{equation*}
saying that switch $x$ is on, and and inferons
%\begin{equation*}
$
\inferon{\predLIT{(y)}}{P'}{1}{} 
$
%\end{equation*}
saying that bulb $y$ is lit.
We can think of $\predON{}$ as an abbreviation for the predicate `is a switch and is on', and $\predLIT{}$ as meaning `is a bulb and is lit'. 
In particular, then, we must have
$
 \{ t \in \mathrm{Cl}(\mathrm{Terms}) \mid \ \vdash _\baseB \inferatom{\predON{(t)}}{1}{} \} \subseteq M$
 and 
 $\{ t \in \mathrm{Cl}(\mathrm{Terms}) \mid \ \vdash _\baseB \inferatom{\predLIT{(t)}}{1}{} \} \subseteq N $.
%\end{align*}
%\begin{align*}
% \{ t \in \mathrm{Cl}(\mathrm{Terms}) \mid \ \vdash _\baseB \inferatom{\predON{(t)}}{1}{} \} & \subseteq M \\
% \{ t \in \mathrm{Cl}(\mathrm{Terms}) \mid \ \vdash _\baseB \inferatom{\predLIT{(t)}}{1}{} \} & \subseteq N .
%\end{align*}

We are supposing a particular flashlight named by $t'' \in L$, and that its component switch, $t =f^\vee (t'') \in M$, is in the on state, as derivable in $\mathcal{P}$, so that we have both 
%\begin{equation*}
$
\vdash _\baseP 
\inferatom{\predON{(t)}}{1}{}
$
%\qquad \mbox{ and } \qquad
and 
\[
\Vdash _\emptyset 
\inferon{\predON{(t)}}{\baseP}{1}{} 
\]
for all bases $\baseP$.

Suppose also predicates $\predONflash{}$ and $\predLITflash{}$ such that
$
 \{ t \in \mathrm{Cl}(\mathrm{Terms}) \mid \ \vdash _\baseB \inferatom{\predONflash{(t)}}{1}{} \}  \subseteq L 
$ and 
$
 \{ t \in \mathrm{Cl}(\mathrm{Terms}) \mid \ \vdash _\baseB \inferatom{\predLITflash{(t)}}{1}{} \} \subseteq L 
$
%\begin{align*}
% \{ t \in \mathrm{Cl}(\mathrm{Terms}) \mid \ \vdash _\baseB \inferatom{\predONflash{(t)}}{1}{} \} & \subseteq L \\
% \{ t \in \mathrm{Cl}(\mathrm{Terms}) \mid \ \vdash _\baseB \inferatom{\predLITflash{(t)}}{1}{} \} & \subseteq L 
%\end{align*}
for all bases $\baseB$.
These predicates say `is a flashlight with a switch that is on', and `is a flashlight with a bulb that is lit', respectively.
For the purposes of this example, abbreviate $f^\wedge (1)$ as $f^\wedge$, as we are only interested in unary predicates. Define 
$
f^\wedge (\predON{}) =  \predONflash{}
$
and 
$
    g^\wedge (\predLIT{})  =  \predLITflash{} 
$.
%\begin{align*}
%    f^\wedge (\predON{})  & =  \predONflash{} \\ 
%    g^\wedge (\predLIT{}) & =  \predLITflash{} .
%\end{align*}
We are supposing the condition (\ref{eq:multiChu}), which guarantees that
$ % \begin{equation*}
\vdash_{\mathcal{P''}} 
\inferatom{(f^\wedge (\predON)){(t'')}}{1}{}  
$ %\end{equation*} 
since $\vdash _\baseP \inferatom{\predON{(f^\vee(t''))}}{1}{}$. 

Suppose stock $(L,\mathcal{P}'')$ represents only `normal' (properly functioning \cite{BarwiseSeligman1997}), flashlights, $x$: that is, suppose that 
\begin{equation*}
    \Vdash _{\mathcal{P''}} 
    \forall x . (
    \inferon{\predONflash{(x)}}{P''}{1}{} 
    \supset 
    \inferon{\predLITflash{(x)}}{P''}{1}{})
\end{equation*}
Then the constraint obtains that if the switch is on, then the bulb is lit holds at the flashlight $t''$: 
\begin{equation*}
\inferon{(f^\wedge (\predON{}))(t'')}{P''}{1}{}   
    \Vdash_{\mathcal{P''}} 
        \inferon{(g^\wedge (\predLIT{}))(t'')}{P''}{1}{} 
\end{equation*}
In other words,  
$\Vdash _{\baseP''} \inferon{\predON{(t)}}{P}{1}{}$ carries the information that 
$\Vdash_{\mathcal{P''}} \inferon{\predLIT{(t')}}{P'}{1}{}$.
From this it follows by the {Inf} clause of the support relation that 
$ %\begin{equation*}
 \Vdash_{\mathcal{P''}} 
 \inferon{(g^\wedge (\predLIT{}))(t'')}{P''}{1}{}
$ %\end{equation*}
and therefore 
$ %\begin{equation*}
 \vdash _{\mathcal{P''}} 
 \inferatom{(g^\wedge (\predLIT{}))(t'')}{1}{} 
$. %\end{equation*}
Applying condition (\ref{eq:multiChu}) again we get 
$ % \begin{equation*}
 \vdash_{\mathcal{P'}} 
 \inferatom{\predLIT{(g^\vee (t''))}}{1}{}  
$ % \end{equation*}
and so 
\begin{equation*}
 \Vdash_{\emptyset} 
 \inferon{\predLIT{(g^\vee (t''))}}{P'}{1}{} 
\end{equation*}
saying that the component bulb $t= g^\vee (t'')$ is lit according to base $\mathcal{P'}$. \fillBox 
\end{example}

\section{Information Flow and System Structure} 
\label{sec:flow}

\subsection{Information Flow from Inferomorphisms and Inferonic Bases} 
\label{subsec:flow-morph-base}

Recall the intuition behind the notion of `constraint' from situation theory in which logical properties' holding (that is, infons' being `supported') in one situation necessarily entail logical properties in another situation. That is, constraints describe a form of information flow
in the information-as-correlation sense \cite{Benthem2006,BenthemMartinez2008stories}. The existence of a constraint is often presumed to be tied to some structural factor relating situations, known as a `channel'. In the `Logic of Distributed Systems' 
approach \cite{BarwiseSeligman1997} using classifications,  %(Section~\ref{subsec:Classifications}), 
infomorphisms, when combined into channels, give a mathematically precise notion of correlation of logical properties (instances of classification relations) across linked system components.

In the approach of Section~\ref{sec:LogicalBasis}, the logical language and the base-extension support relation semantics are designed to allow for reasoning, relative to an inferonic base, about properties of other inferonic bases, via the inferon construct. 
Flow of information can then be handled, as in Example~\ref{example:NoSmoke}, by ensuring constraints connecting inferons regarding inferonic bases are supported in the inferonic base. An example of this is the rule (\ref{rule:SmokeFire}), where support is ensured by requiring the constraint as a rule within a certain inferonic base. 

In Section~\ref{sec:morphisms-flow}, an inferomorphism, because it leads to Proposition~\ref{result:InferonChuEquivalence}, provides an alternative view, in which the (now, bidirectional) constraint is not formalized within the inferonic base, but rather holds because of the properties of the inferomorphism mapping. 

In this section, we study another notion of situation, namely `site' that comes along with a natural notion of channel. Our sites are built from  inferonic atoms (Definition~\ref{def:inf-atom}). The logical languages and base-extension semantics will be generalizations of that introduced in Section~\ref{sec:LogicalBasis}. This approach combines the view of information developed above with the structural approach of Barwise, Gabbay, and Hartonas  \cite{BarwiseGabbay1996}.

\subsection{A Logic of Inferons with Sites} 
\label{subsec:LogicSites} 

Building on ideas given by Barwise and Gabbay in \cite{BarwiseGabbay1996}, we define a \emph{site}, $\Pfrak$ to be a finite set of inferonic atoms and
recall that sites form the contexts used in the basic derivability relation for any base. %(Definition~\ref{def})

Given this notion of site, we can extend the logic of inferons by giving a \emph{contextual support relation} $\Vdash^\Pfrak$ defined as in Figure~\ref{fig:supportSites}. The soundness and completeness theorems of Section~\ref{sec:LogicalBasis} can be extended to this logic. 

\begin{figure}[ht]
\hrule
\[
    \begin{array}{lrcl} 
        (\mbox{At}) & \Vdash ^\Pfrak _\baseB \inferonPlain{P}{\baseC}{b}{}
        & \mbox {iff} & 
        \mbox{$\Pfrak \vdash_{\baseB \cup \baseC} \inferatom{P}{b}{}$
        for closed $P$}\\
       %(\mbox{At}) & \Vdash_\mathcal{B} \inferon{P}{P}{b}{}   & \mbox{iff} & 
       %\mbox{$\vdash_{\mathcal{B} \,\cup\, \mathcal{P}} 
       %\inferatom{P}{b}{}$ for closed $P$} \\ 
       (\wedge) & \Vdash^{\Pfrak}_\mathcal{B} {\phi}_1 \wedge {\phi}_2 & \mbox{iff} & \mbox{$\Vdash^\Pfrak_\mathcal{B} {\phi}_1$ and $\Vdash^\Pfrak _\mathcal{B} {\phi}_2$} \\ 
       (\vee) & \Vdash^\Pfrak_\mathcal{B} {\phi}_1 \vee {\phi}_2 
       & \mbox{iff} & \mbox{for every closed  
       ${\iota}$ and every 
       $\mathcal{C} \supseteq \mathcal{B}$,} \\   
       & & & \mbox{if $\phi_1  \Vdash ^\Pfrak_\mathcal{C} \iota$ and $\phi_2 \Vdash ^\Pfrak _\mathcal{C} \iota$, then $\Vdash_\mathcal{C} \iota$} \\
       (\supset) & \Vdash ^\Pfrak_\mathcal{B} {\phi}_1 \supset {\phi}_2 & \mbox{iff} & \mbox{${\phi}_1 \Vdash^\Pfrak_\mathcal{B} {\phi}_2$} \\ 
       (\mbox{Inf}) & \mbox{for $\Theta \neq \emptyset$, 
        $\Theta \Vdash ^\Pfrak_\mathcal{B} {\phi} $} & \mbox{iff} & 
        \mbox{for every $\mathcal{C} \supseteq \mathcal{B}$, and every $\Pfrak '$, if $\Vdash ^{\Pfrak'}_\mathcal{C} {\psi}$, for every ${\psi} \in \Theta$,} \\ 
        & & & \mbox{then $\Vdash^{\Pfrak \cup \Pfrak'}_\mathcal{C} {\phi}$} \\ 
        (\bot) & \Vdash^\Pfrak_\mathcal{B} \bot  
        & \mbox{iff} & \mbox{for all closed $\iota$, $\Vdash^\Pfrak_\mathcal{B} \iota$} 
    \end{array}
\]

\dotfill 
\[
\begin{array}{lrcl}     
(\forall) &  \Vdash^\Pfrak_\mathcal{B} \forall{x}.\phi & \mbox{iff} & 
     \mbox{$\Vdash ^\Pfrak_\mathcal{B} \phi[t/x]$ for all $t \in {\rm Cl}({\mathcal{T}})$} \\
(\exists) &  \Vdash^\Pfrak_\mathcal{B} \exists{x}.\phi & \mbox{iff} & 
  \mbox{for all $\mathcal{C} \supseteq \mathcal{B}$ and any closed $\iota$, if} \\ 
  & & & \mbox{$\phi[t/x] \Vdash ^\Pfrak _\mathcal{C} \iota$, for any $t \in {\rm Cl(\mathcal{T})}$, then $\Vdash^\Pfrak_\mathcal{C} \iota$} 
\end{array}
\]

 \dotfill 
\[
\begin{array}{lrcl}
(\mbox{Validity}) & \Gamma \Vdash \phi & \mbox{iff} & 
    \mbox{for all $\baseB$, $\Gamma \Vdash^\emptyset_\baseB \phi$}
\end{array}
\]

\hrule 
\caption{Contextual support relation} 
\label{fig:supportSites}
\end{figure}

For any site $\Pfrak$, let $\baseop{(\Pfrak)}$ be the base consisting of the rule set $\{ \Rightarrow \iota \mid \iota \in \Pfrak \}$. The following is easily shown: 
\begin{equation*}
    \Pfrak \vdash_\baseB \iota 
    \mbox{ iff } 
    \vdash _{\baseB \cup \baseop{(\Pfrak)}} \iota  
\end{equation*}
for all $\iota$, $\Pfrak$ and $\baseB$. From this it follows that
\begin{equation*}
    \Theta \Vdash ^\Pfrak _ \baseB \phi 
    \mbox{ iff }
    \Theta \Vdash _{\baseB \cup \baseop{(\Pfrak)}} \phi 
\end{equation*}
for all $\Theta$, $\phi$, $\Pfrak$ and $\baseB$.

When site $\Pfrak$ is a subseteq of $\Pfrak '$, we say that the inclusion is a \emph{channel}. Such a channels is labelled by the relative complement $\lambda:=\Pfrak ' \setminus \Pfrak$. We write the channel in the notation $\Pfrak \stackrel{\lambda}{\rightsquigarrow} \Pfrak '$. Channels can be composed by function composition, with labels combining by union. As a result, site, channels and composition give a simple example of an \emph{information network} in the sense of Barwise, Gabbay and Hartonas \cite{BarwiseGabbay1995}. 

The support relation $\Vdash ^\Pfrak _\baseB$ provides us with instances of the form 
$
    \Vdash ^\Pfrak _\baseB \phi
$
at sites $\Pfrak$. 
We can take this base-extension support relation instance to be a rendering of the situation-theoretic `$\Pfrak$ supports $\phi$' relation instance, relativized to $\baseB$.  

The base-extension support relation instances of the form 
$
  \Gamma \Vdash ^{\Pfrak} _\baseB \phi  
$ 
for $\Gamma$ non-empty can be viewed as formul{\ae} that are situated at channels, relative to bases. 
%Both of these may be compared to the `of-type' relation for types of sites, and types of channels  \cite{BarwiseGabbay1996} (specifically, Definitions 2 and 3).
For a fixed base $\mathcal{B}$, examining again the Inf-rule
%\begin{equation*}
%\begin{array}{rcl}
%    \Gamma \Vdash _\mathcal{B}^{R( \cdot )} \psi 
%    & \mbox{iff} & \mbox{for all $\mathcal{X} \supseteq \mathcal{B}$ and all $U \in \mathbb{B}(A)$, if $\Vdash _{\mathcal{X}}^U \Gamma$, then  $\Vdash_\mathcal{X}^{R(U)} \psi$}
%\end{array}
%\end{equation*}
of Figure~\ref{fig:supportSites}, 
we see that this concerns the flow of information along the channel (inclusion) from $\Pfrak'$ to $\Pfrak \cup \Pfrak'$. The channel underpins the logical constraints according to Inf. 

The base-extension support relation semantics ties together, in a systematic way, support at sites and support at channels. For example, this is evident in  the support relation clauses
\[
\begin{array}{rcl}
\Vdash ^\Pfrak _\baseB \phi \supset \psi & \mbox{iff} &
\phi \Vdash^{\Pfrak}_\baseB \psi \\
    & \mbox{iff} & \mbox{for all $\baseC \supseteq \baseB$, and all ${\Pfrak'}$, if $\Vdash^{\Pfrak'}_\baseC \phi$, then $\Vdash^{\Pfrak \cup {\Pfrak'} }_\baseC \psi$} 
\end{array}
\]
This can be seen as a way of formalizing constraints, via flow along channels, and also of internalizing this in the logic using implication.

\begin{example}[Access control] 
\label{example:access-control}
    We consider an access-control system that consumes a password and a token, and then grants access. 
    The system consumes an inferon 
    $\inferonPlain{p}{\baseB _1}{1}{}$, 
    representing (information carried by) supply of the password. It becomes a sub-process that consumes a token, carrying 
    $\inferonPlain{q}{\baseB _2}{1}{}$, 
    and makes a decision to grant access, corresponding to atom 
    $\inferonPlain{r}{\baseB _3}{1}{}$. We want a constraint, informally expressed as `if $p$ is presented, then $q$ is presented, then access, $r$, is granted.'
    We specify the system, without the token $p$, by a formula 
    $
    \inferonPlain{p}{\baseB _1}{1}{} \supset (\inferonPlain{q}{\baseB _2}{1}{}  \supset \inferonPlain{r}{\baseB _3}{1}{}) 
    $.
    In this example, we do not want anything other than the context to lead to support for the three inferons, and so we take $\baseB _1 = \baseB_2 = \baseB_3 = \emptyset$.
    
    A model of the system consists of a base and a site, such that this formula is supported. 
    We take the site to be empty, since we consider it in isolation. 
    The complete system, without the token, is modelled by some $\baseB$ with 
    %\begin{equation}
    %\label{equation:SitesChannelsFromInf}
    $
    \Vdash ^\emptyset _\baseB
    \inferonPlain{p}{\emptyset}{1}{} \supset (\inferonPlain{q}{\emptyset}{1}{}  \supset \inferonPlain{r}{\emptyset}{1}{}) 
    $
    .
    %\end{equation}
    The significance of the use of $\emptyset$ will be seen in capturing the desired constraints: 
    \begin{equation}
    \label{equation:SitesChannelsFromInfConstraint}
    \mbox{if $\Vdash^{\Pfrak}_\baseB \inferonPlain{p}{\emptyset}{1}{}$ and 
    $\Vdash^{\Pfrak'}_\baseB \inferonPlain{q}{\emptyset}{1}{}$, 
    then $\Vdash^{\Pfrak \cup \Pfrak'}_\baseB \inferonPlain{r}{\emptyset}{1}{}$}
    \end{equation}
    parametrized on sites $\Pfrak, \Pfrak'$.

    Assume that we do have a base $\baseB$ providing a model.%(\ref{equation:SitesChannelsFromInf}). 
    This yields 
    $\inferonPlain{p}{\emptyset}{1}{}
    \Vdash^{\emptyset}_\baseB 
    \inferonPlain{q}{\emptyset}{1}{}  \supset \inferonPlain{r}{\emptyset}{1}{}  
    $ 
    using the support relation clause for $\supset$. For any site $\Pfrak$, there is a channel from site $\Pfrak$ to $\Pfrak \cup \emptyset = \Pfrak$. Now suppose $\Vdash_\baseB^\Pfrak \inferonPlain{p}{\emptyset}{1}{}$ as in the premiss of the desired constraint (\ref{equation:SitesChannelsFromInfConstraint}). Then Inf transfers this along the channel to give 
    $\Vdash_\baseB^\Pfrak 
    \inferonPlain{q}{\emptyset}{1}{} \supset \inferonPlain{r}{\emptyset}{1}{}$. 
    The support relation clause at $\supset$ then tells us that 
    $\inferonPlain{q}{\emptyset}{1}{} \Vdash_\baseB ^{\Pfrak} \inferonPlain{r}{\emptyset}{1}{}$. 
   Assuming 
   $\Vdash^{\Pfrak'}_\baseB \inferonPlain{q}{\emptyset}{1}{}$, the channel from $\Pfrak$ to $\Pfrak \cup \Pfrak'$ along provides the constraint instance of Inf ensuring that 
    $\Vdash _\baseB ^{\Pfrak \cup \Pfrak'} 
    \inferonPlain{r}{\emptyset}{1}{}$. 
    %
    % COMMENT FROM BUNCHED ACCOUNT
    %Compared to the informal, hypothetical statement of the desired constraint on the access control system, this makes it clear how the site changes when the system is presented with a password and a token. 
    \fillBox
\end{example}

\begin{example}[Airport security]
\label{example:airport} 
A model of the departure-security process at an airport is depicted in Figure~\ref{fig:airport} (taken from \cite{GGP2024-MFPS}). 
Passengers enter the airport and proceed to the check-in desk. Their baggage enters the hold-baggage security process and they enter the cabin and passenger security process, and so on. Critically, the two processes are synchronized at the aircraft, where the baggage in the hold is reconciled with the passengers in the cabin to whom it belongs. 

A particular passenger, $o$, passes through security controls prior to a flight. The passenger is travelling  with hold-baggage $h$, and is using passport $p$. We describe how information is processed throughout a simplified and idealized version of process. The focus in our description here is solely on securing the flight itself. The architecture of the system is indicated in Figure~\ref{fig:airport}.

\begin{figure}[ht]
    \hrule
    \includegraphics{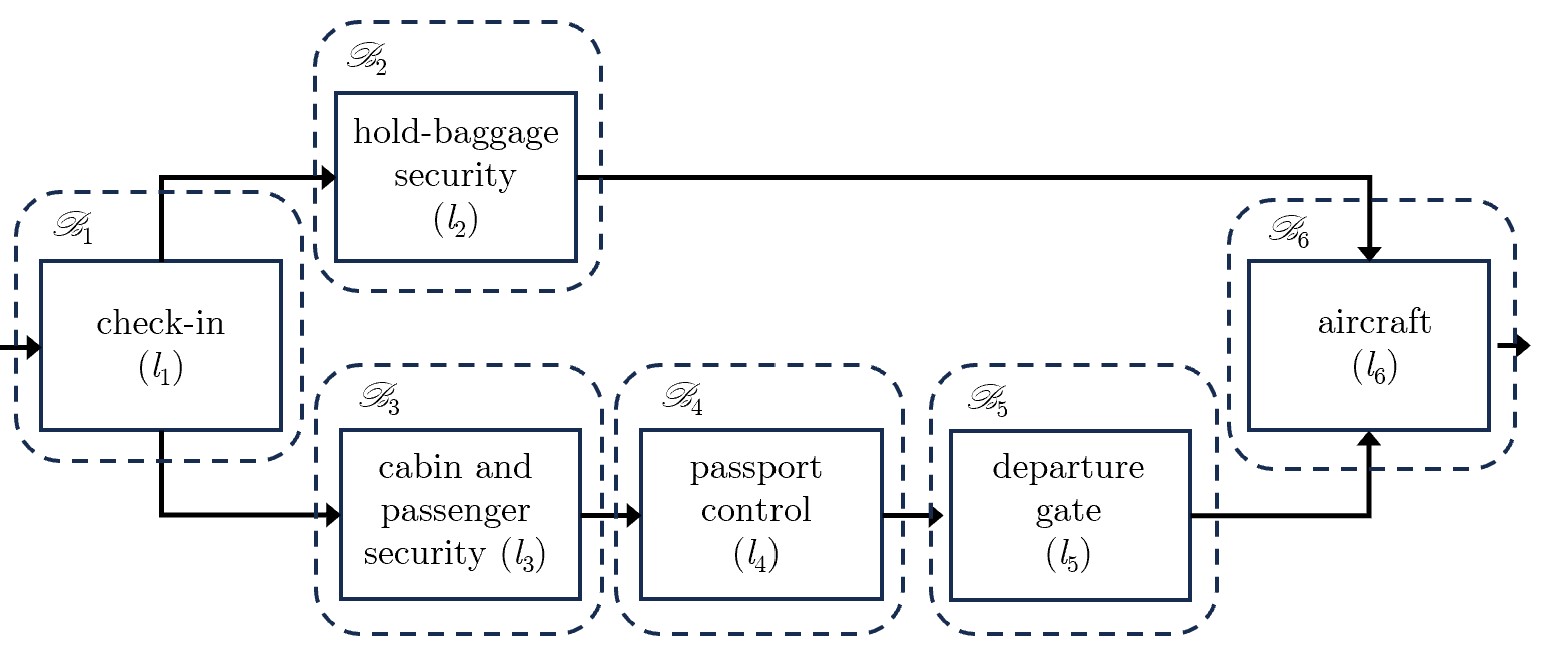}
    \hrule
    \caption{The airport security system (taken from \cite{GGP2024-MFPS})}
    \label{fig:airport}
\end{figure}

%The propositions, to be the content of information carried:
%\begin{itemize}
%    \item $I^W(p)$: the identity contained in the passport, together with all assurances checkable by a checck-in desk operative
%    \item $I^S(p)$: the identity contained in the passport, together with all assurances checkable by a passport control operative
%    \item $P^W (p)$: Passport $p$ has been (weakly) checked at check-in for bag-drop
%    \item $A(h,t,p)$: There is an association between hold baggage $h$, ticket $t$, and passport $p$
%    \item $H(h)$: baggage $h$ contains harmless contents
%    \item $L(h)$; the baggage is (to be) loaded
%    \item $T(t)$: Content carried by ticket $t$ 
%    \item $O(o)$ the passenger $o$ carries no harmful materials materials % `o' for Otto
%    \item $N(a)$: Airside contains no harmful materials
%    \item $P^S(p)$: Passport $p$ has been strongly checked at passport control
%    \item $B(p,t)$: the bearer of passport $p$ and ticket $t$ may board
    % THE FOLLOWING NOT uSED
    %\item $M(a)$: All individuals airside were ticket cecked at security % I think we can do without this
    %\item $F(p)$: that the passenger is not on a no-fly list
    %\item $D$: the security process is done (complete), within the aircraft
%\end{itemize}

For each $j=1 , \ldots ,6$, at each (physical) location $l_j$ we define a site $S_j$ and a base $\baseB _j$. For $j = 1 , \ldots , 5$, the base is the base underpinning the (relevant for process component) background reasoning capacity of the agent at that location, prior to execution of the process. The site component is the atomic inferonic information that becomes available for use with the base at the location as the process executes.
A slightly different set-up applies at location $l_6$.

\medskip
\textbf{$l_1$ (check-in): } 
 The passport $p$ is checked for validity. This involves using some of the atomic inferonic information $\inferatom{I^{W} (p)}{1}{}$ carried by the passport. 
A ticket $t$ is issued. This functions as an  authentication token that is used in subsequent stages. The ticket carries this authentication information as an inferonic atom $\inferatom{T(t)}{1}{}$. 
% - The ticket is a short-term authentication token
% - In practice, there is a check that the identity is expected for a flight in the next, say, 5 hours, but we ignore this. 
Not all of the information carried by the passport can be used by the check-in agent, but will be used at a later stage: we represent this by the atom $\inferatom{I^S(p)}{1}{}$. The site 
$S_1 = \{\inferatom{I^W(p)}{1}{} , \inferatom{I^S(p)}{1}{}\}$ represents the passport information carried by the passport.
%\notemc{We are identifying Otto with his passport in the present bit of the treatment, not merely binding them, which isn't quite right.}

Base $\baseB_1$ is used by the agent at $l_1$. This includes rules
\begin{equation*}
    \dfrac{
            \inferatom{I^W(p)}{1}{}
        }
        {
            \inferatom{P^W(p)}{1}{}
        }
    \qquad
    \dfrac{
            \inferatom{I^W(p)}{1}{}
        }
        {
            \inferatom{A(h,t,p)}{1}{}
        }
    \qquad
    \dfrac{
            \inferatom{I^W(p)}{1}{} 
        }
        {
            \inferatom{T(t)}{1}{}
        }
        .
\end{equation*}
%\begin{equation*}
%    \dfrac{
%    }
%    {
%        \inferatom{I^W(p)}{1}{} \vdash \inferatom{P^W(p)}{1}{}
%    }
%    \qquad
%    \dfrac{}{\inferatom{I^W(p)}{1}{} \vdash \inferatom{A(h,t,p)}{1}{}}
%    \qquad
%    \dfrac{}{\inferatom{I^W(p)}{1}{} \vdash \inferatom{T(t)}{1}{}}
%\end{equation*}
The first rule is used to generate an inferonic atom $\inferatom{P^W(p)}{1}{}$ meaning that the passenger with passport $p$ has passed the (weak) check of the passport conducted by the check-in desk agent. 
The second rule is used to generate the inferonic atom $\inferatom{A(h,t,p)}{1}{}$, used to support information carried by the luggage label, and associating o's baggage, passport, and  ticket.  
The third rule is used to generate an inferonic atom $\inferatom{T(t)}{1}{}$ used to support information carried by the ticket. 

We now have $S_1 \vdash _{\baseB_1} \inferatom{P^W(p)}{1}{}$ and $S_1 \vdash_{\baseB_1} \inferatom{A(h,t,p)}{1}{}$ 
and 
$S_1 \vdash_{\baseB_1} \inferatom{T(t)}{1}{}$, 
and so 
\begin{equation*}
    \Vdash_{\emptyset}^{S_1} \inferonPlain{P^W(p)}{{\baseB}_1}{1}{}
    \qquad \mbox{and} \qquad 
    \Vdash_{\emptyset}^{S_1} \inferonPlain{A(h,t,p)}{{\baseB}_1}{1}{}
    \qquad \mbox{and} \qquad 
    \Vdash_{\emptyset}^{S_1} \inferonPlain{T(t)}{{\baseB}_1}{1}{}
\end{equation*}
representing the information that is generated by the check-in agent.

\medskip \textbf{$l_2$ (hold-baggage security): } 
The tag on the baggage carries with it information to this location. The 3-way association of $h$, $t$, and $p$, flows from the site at $l_1$ to the site at $l_2$ is explicit in the tag. Implicitly, the information that $p$ has been checked at check-in is inferrable, and thus also carried along this channel. The baggage scanner at $l_2$ provides an inferonic atom $\inferatom{H(h)}{1}{}$, meaning that the bag is clear, to the baggage-handler agent at this location. 
The agent at $l_2$ makes use of site 
$
S_2 = S_1 \cup \{ 
    \inferatom{P^W (p)}{1}{} , 
    \inferatom{A(h,t,p)}{1}{}, 
    \inferatom{H(h)}{1}{}
\}, 
$and we have a channel 
$S_1 \stackrel{}{\rightsquigarrow} S_2$. 
We have that $\inferatom{T(t)}{1}{}$ is not in $S_2$, representing the fact that this information does not flow to $l_2$.
%labelled by
%$
%\lambda_2 :=\{ 
%\inferatom{P^W (p)}{1}{} , 
%\inferatom{A(h,t,p)}{1}{} , 
%\inferatom{H(h)}{1}{}
%\}
%$.
%The information is then used in processing by an agent at $l_2$ using $\baseB _2$. The agent scans the baggage to obtain information that the contents of the bag are harmless. 

The agent uses the three atoms to generate information that the bag should be, and is, loaded. This is supported by an inferonic atom $\inferatom{L(h)}{1}{}$. Base $\baseB_2$, used by the agent, contains the rule
\begin{equation*}
    \dfrac{
        \inferatom{P^W (p)}{1}{} 
        \qquad 
        \inferatom{A(h,t,p)}{1}{} 
        \qquad
        \inferatom{H(h)}{1}{}
    }
    {
        \inferatom{L(h)}{1}{}
    }
\end{equation*}
%\begin{equation*}
%    \dfrac{
%        S \vdash \inferatom{P^W (p)}{1}{} \qquad 
%        S \vdash \inferatom{A(h,t,p)}{1}{} \qquad
%        S \vdash \inferatom{H(h)}{1}{}
%    }{S \vdash \inferatom{L(h)}{1}{}}
%\end{equation*}
%parametrized on $S$. 
Then
$S_2 \vdash_{\baseB_2} \inferatom{L(h)}{1}{}$
and $\Vdash^{S_2}_\emptyset \inferonPlain{L(h)}{\baseB_2}{1}{}$. 

\medskip
\textbf{$l_3$ (cabin and passenger security):} 
At this location, the ticket $t$ is presented and the information carried by it is used. An inferonic atom 
$\inferatom{O(o)}{1}{}$, capturing the successful scanning of the passenger is also used.
Information $N(a)$ that no harmful materials are airside is generated (in a more detailed model, an invariant would be maintained).
% ABout $M(a)$
%Information that all passengers presenting at the gate were previously ticketed is generated. This helps prevent failures (and insider attacks) at the subsequent stages, in particular removing single-point-of-failure with respect to ticket use at the gate. It also helps maintain security properties of airside. However, this information is not normally used at later stages, and we make no further mention of it.

Site $S_3 = S_1  \cup \{\inferatom{T(t)}{1}{} , 
\inferatom{O(o)}{1}{}\}$ contains both the information obtained from $l_1$ and the information obtained from the security scanning process. There's a channel $S_1 \stackrel{}{\rightsquigarrow} S_3$. 
%labelled by 
%$\lambda_3 := \{ 
%\inferatom{T(t)}{1}{}  ,
%\inferatom{O(o)}{1}{} 
%\}
%$.  
The atom $\inferatom{A(h,t,p)}{1}{}$ is not in $S_3$, representing the fact that this information does not flow to, and so is not used at, $l_3$. Base $\baseB_3$ for the security agent at $l_3$ contains 
\begin{equation*}
    \dfrac{ 
        \inferatom{T(t)}{1}{}
        \qquad
        \inferatom{O(o)}{1}{}
    }
    { 
        \inferatom{N(a)}{1}{}
    }
\end{equation*}
%\begin{equation*}
%\dfrac{ }
%{\inferatom{T(t)}{1}{},  \inferatom{O(o)}{1}{} \vdash \inferatom{N(a)}{1}{}
%}
%\end{equation*}
so that we have $S_3 \vdash _{\baseB_3} \inferatom{N(a)}{1}{}$ and $\Vdash _\emptyset ^{S_3} \inferonPlain{N(a)}{\baseB_3}{1}{}$. 

\medskip
\textbf{$l_4$ (passport control): } The only relevant information here is that carried by the passport (and that carried by the passenger, in his biometric form, which we do not here model separately). In information-processing terms, for assuring the aircarft is secure, this process could be carried out in parallel with those at $l_1$ and $l_3$. It does have to be sequenced before the processes at $l_5$ and $l_6$. We ignore other forms of information required to successfully cross passport control in practice, such as absencce of various `flags'. We take site 
$S_4 = S_3 \cup \{ \inferatom{N(a)}{1}{} \}$
and note that there is a channel $S_3 \stackrel{\lambda_4}{\rightsquigarrow} S_4$. 
%with $\lambda _4 = \{ \inferatom{N(a)}{1}{} \}$.

The agent at this location makes use of all the information in the passport, and so base $\baseB_4$ has a rule
\begin{equation*}
    \dfrac{
        \inferatom{I^W(p)}{1}{}
        \qquad 
        \inferatom{I^S(p)}{1}{}
    }{
        \inferatom{P^S(p)}{1}{}
    }
\end{equation*}
%\begin{equation*}
%    \dfrac{
%    }{
%        \inferatom{I^S(p)}{1}{}  
%        \vdash 
%        \inferatom{P^S(p)}{1}{}
%    }
%\end{equation*}
generating an inferonic atom $\inferatom{P^S(p)}{1}{}$ to the effect that the passport passes these strong checks. This yields 
$S_4 \vdash_{\baseB_4} \inferatom{P^S(p)}{1}{}$
and therefore $\Vdash^{S_4}_\emptyset 
    \inferonPlain{P^S(p)}{\baseB_4}{1}{}$. 

\medskip
\textbf{$l_5$ (departure gate):} At the departure gate, an agent checks the ticket again. This, together with information that airside is secure and the passenger has passed through passport control, is taken to be sufficient to allow the passenger to board. (In practice, another passport check is often conducted.) 

We take site $S_5 = S_4 \cup \{ \inferatom{N(a)}{1}{} \}$ and $\baseB_5$ to include rule representing the ability of the agent at $l_5$ to make a decision for the passenger with $p$ and $t$ to board: 
\begin{equation*}
    \dfrac{
        \inferatom{P^S(p)}{1}{} 
        \qquad 
        \inferatom{T(t)}{1}{} 
        \qquad 
        \inferatom{N(a)}{1}{}
    }{
        \inferatom{B(p,t)}{1}{}
    }
\end{equation*}
%\begin{equation*}
%    \dfrac{
%    }{
%        \inferatom{P^S(p)}{1}{} ,
%        \inferatom{T(t)}{1}{} ,
%        \inferatom{N(a)}{1}{}
%        \vdash
%        \inferatom{B(p,t)}{1}{}
%    }
%    .
%\end{equation*}
We have $\vdash^{S_5}_{\baseB_5} \inferatom{B(p,t)}{1}{}$ and 
$\Vdash^{S_5}_\emptyset \inferonPlain{B(p,t)}{\baseB_5}{1}{}$. 

\medskip
\textbf{$l_6$ (aircraft): } 
A final proccess occurs in the cabin, where it is checked that the passengers boarded and baggage loaded match. This requires input of information from a lead member (purser) of the cabin crew and a lead staff member of the baggage-handling team. 
We describe a simplified version of such a process, in which the purser conducts the check using boarding information provided by the gate and baggage information from the baggage-handler. Rather than modelling the  taking of the decision as something that consumes some information and produces new information, we rather focus on the fact that there is sufficient information in place to support such a decision.

We are not interested presently in the information produced by the purser, so for this model have a free choice for what to take as their base, $\baseB _6$.  We are interested in the information that they have available. We take site $S_6 := S_2 \cup S_5 
\cup \{ \inferatom{B(p,t)}{1}{} \}$. 
This does not include the inferons
$\inferatom{L(h)}{1}{}$ and $\inferatom{A(h,t,p)}{1}{}$ generated by the baggage handler.
%
%with 
%\begin{align*}
%    \lambda_{2,6}: = & S_5 \\%\cup \{ \inferatom{L(h)}{1}{} ,\inferatom{B(p,t)}{1}{} \} \\
%    \lambda_{5,6}: = & S_2 \\ %\cup \{ \inferatom{L(h)}{1}{} ,\inferatom{B(p,t)}{1}{} \} 
%\end{align*}
%giving channels
%\begin{equation*}
%    S_2 \stackrel{\lambda_{2,6}}{\rightsquigarrow} S_6
%    \qquad
%    \mbox{ and }
%    \qquad
%    S_5 \stackrel{\lambda_{5,6}}{\rightsquigarrow} S_6
%\end{equation*} 

We have, by monotonicity, that  
$S_6 \vdash_{\baseB_2 \cup \baseB_6}  \inferatom{B(p,t)}{1}{}$, that $S_6 \vdash_{\baseB_2 \cup \baseB_6} \inferatom{L(h)}{1}{}
$, and that $S_6 \vdash_{\baseB_2 \cup \baseB_6}  \inferatom{A(p,t,h)}{1}{}$.
%\end{align*}
%\begin{align*}
%    S_6 \vdash _{\baseB_2 \cup \baseB_6} & \inferatom{B(p,t)}{1}{} \\
%    S_6 \vdash _{\baseB_2 \cup \baseB_6} & \inferatom{L(h)}{1}{} \\
%    S_6 \vdash _{\baseB_2 \cup \baseB_6} & \inferatom{A(p,t,h)}{1}{}
%\end{align*}
We therefore have 
$\Vdash_{\baseB_2}^{S_6} 
    \inferonPlain{B(p,t)}{\baseB_6}{1}{}
    \wedge 
    \inferonPlain{L(h)}{\baseB_6}{1}{}
    \wedge 
    \inferonPlain{A(p,t,h)}{\baseB_6}{1}{}$. 
Equivalently,  
$\Vdash_{\baseB_2}^{S_6} 
    \inferonPlain{
        B(p,t)\wedge 
        L(h)  \wedge 
        A(p,t,h)
    }
    {\baseB_6}{1}{}$. 
The conjunction of three inferons is supported at the site $S_6$ using shared inferential capacity from the baggage handler (who uses base $\baseB_2$). Each of the three inferons involves the inferential capacity of the purser ($\baseB_6$) who accepts information passed along from the departure gate. The inferential capacity of the baggage-handler appears as the base $\baseB_2$ that provides context in the support relation. 
%
%Note:
%The exchange law
%\begin{equation*}
%    \Vdash _{\baseB_2} ^{S_6} 
%    \inferonPlain{P}{\baseB_5}{1}{}
%    \mbox{ iff }
%    \Vdash _{\baseB_5} ^{S_6}
%    \inferonPlain{P}{\baseB_2}{1}{}
%\end{equation*}
%was used with $P=L(h)$ and $P=A(p,t,h)$ 
\fillBox
\end{example}

%\notemc{Mention bunched lambda calculi to connect back to the Lambek calculi of \cite{BarwiseGabbay1998}.}

% NEXT steps would be to develop the sequent calculi.

In the view above, flows of information are not between distributed parts of an information system; rather they are between a part and another part containing it. However, we can use a technique similar to the methods of modelling systems using inf(er)omorphisms with a common codomain. Suppose we have a pair of channels 
$\Pfrak_1 \stackrel{}{\rightsquigarrow} \Pfrak_3$
and $\Pfrak_2 \stackrel{}{\rightsquigarrow} \Pfrak_3$. 
That is, $\Pfrak_3 = \Pfrak_1 \cup \Pfrak_1' = 
\Pfrak_2 \cup \Pfrak_2'$, for some $\Pfrak_1'$ and $\Pfrak_2'$. Suppose $\phi$ and $\psi$ are contradictory inferons: in particular 
$\Vdash_\baseB ^{\Pfrak_3} (\phi \wedge \psi) \supset \bot$.
If $\Vdash_\baseB^{\Pfrak_1} \phi$, then it cannot be the case that $\Vdash_\baseB ^{\Pfrak_2} \psi$. %A form of information about site $\Pfrak _1$ is transferred to site $\Pfrak_2$.

Example~\ref{example:airport} is modelled in a framework for understanding distributed systems, depicted more generally than for the airport security examples in Figure~\ref{fig:generic}. 
It should be clear --- at least in principle --- that the approach adopted for modelling airport security suggests the availability of a more generic approach. 

%\note{Move to Discussion section?}

These are systems with physically distributed, interacting components, embedded in some operating environment (see, for example, \cite{CIP2022,ICP2025}). Interaction happens via flows of resources of some kind, where the flow of resources may be constrained by an interface. In the present paper, we are interested in the case where `resource' is a form of information, flowing via `channels' and with logical `constraints' specifying the interface. 

\begin{figure}[ht]
    \hrule
    \vspace{3mm}
%    \includegraphics{airport.jpg}
%    \hrule
%    \caption{\begin{figure}[ht]
%    \hrule
%    \vspace{3mm}
    \includegraphics[scale=0.7]{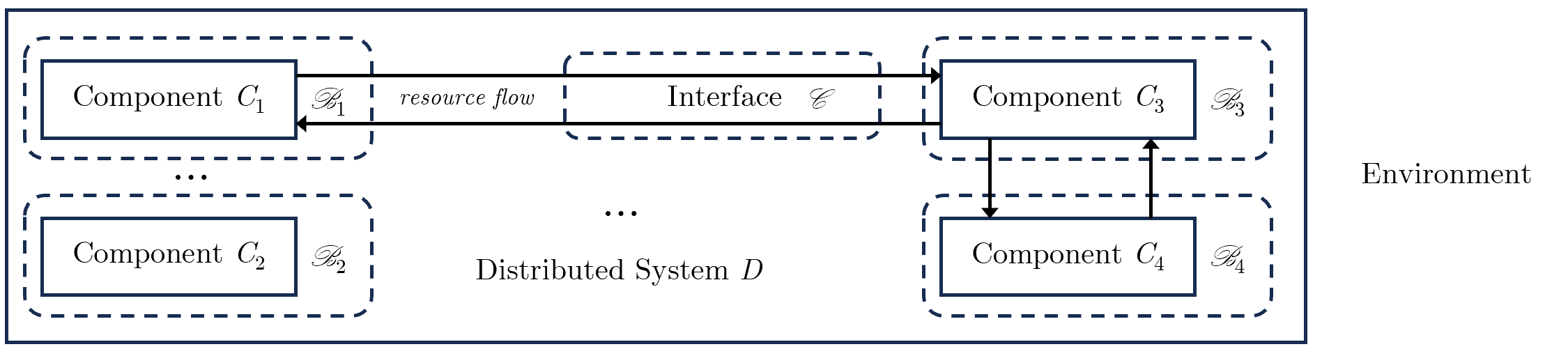}
    \vspace{3mm}
    \hrule
    \caption{A generic distributed system (taken from \cite{GGP2024-MFPS}, cf. \cite{ICP2025})}
    \label{fig:generic}
%    \label{fig:airport}
%\end{figure}}
\end{figure}

Certain substructural logics are well-known to be useful in producing richer models than can be expressed purely in intuitionistic logic. For example, multiplicative linear logics have a number-of-uses reading, while bunched logics have a sharing interpretation. It is known how to give a base-extension semantics to both the linear logic IMALL and the bunched logic BI \cite{GGP2024-MFPS,GGP2025-BI}.\footnote{Note that BI has a model-theoretic semantics in terms of ternary relations (e.g., \cite{GP-BI2023} and references therein), perhaps offering an interesting comparison with \cite{Mares1996,Restall1996}.} In both cases, the support relation takes the form $\Gamma \Vdash ^S \phi$ in which $S$ has structure (and is a multiset or bunch) containing atoms, rather than simply a set as we have it for intuitionistic logic. The set-up with sites and channels generalizes to these support relations. Sites consist of these structured collections of atoms, channels correspond to an evident operations on sites (which can be formulated as filling the hole in a context with a single hole), and constraints work by the respective versions of the Inf clause. We mention in particular the generalized form discussed in \cite{GGP2024-MFPS}:
\begin{equation*}
   \begin{array}{rcl}
   \Gamma \Vdash^{S(\cdot)}_\baseB \phi 
   & \mbox{iff} & 
   \mbox{for all $\baseC \supseteq \baseB$ and  
   all $U \in R(A)$, if  
   $\Vdash^U_\baseC \Gamma$, then  
   $\Vdash^{S(U)}_\baseC \phi$}  
   \end{array}
   \tag*{(Gen-Inf)}
\end{equation*}
where we would take $R(A)$ to be the set of sites and $S(\cdot)$ is a context with a hole. Work remains to be done in fully developing the underlying theory in the case of inferonic atoms and in exploring richer examples.

\section{Discussion} \label{sec:discussion}

We have presented a first step towards an inferentialist account  of information that is grounded in proof-theoretic semantics. Through such a theory we seek to capture the fundamentally inferential nature of information in 
a way that is both direct and a strong foundation for further development. 

We can identify scope for further development in three main directions: metaphysics, logic, and systems. First, in metaphysics, we might seek to characterize the essence of the philosophy of information through employing the razor of proof-theoretic semantics. One important aspect, as  mentioned in Section~\ref{sec:metaphysics}, is an inferentialist veridicality thesis and its influence on our understanding of information. 
% \notete{It would be good if we could say a bit more here, I think. Maybe examples of specific questions/problems we want to tackle. Like inferentialst veridicality.}

Second, in the logical theory, it is clear that there is a wide range of logical variants to be explored, including classical, second-order, substructural, modal (recall, for example,  Remark~\ref{remark:modalities}), and epistemic logics of inferons. Comparisons with model-theoretic approaches such as discussed in \cite{Mares1996,Restall1996} may be useful. While the basic analyses of such logics in terms of base-extension semantics are now mostly available, their integration in logics of inferons remains to be explored. As indicated in Section~\ref{sec:metaphysics}, we foresee, in future work, substantive  
connections with van Benthem's work on logical dynamics and information \cite{vanBLDII2011,vanBenthemTracking2016}. 

In \cite{vanBenthemChu2000}, van~Benthem considers how Chu spaces --- which, as described by Barwise and Seligman in \cite{BarwiseSeligman1997}, provide a model of information flow --- can be seen as models for a two-sorted first-order logic. He identifies a class of formulae of this logic, called `flow formulae', that are preserved by Chu transforms between Chu spaces. A natural question is whether there is a corresponding analysis in our setting; that is, for inferomorphisms. 

Furthermore, the computational-logic-related ideas (in the context of base-extension  semantics, see \cite{GP-DF-NAF-B-eS-2023}) of closed world assumptions, negation-as-failure, and their interaction assertion/denial in inferons would seem to offer a useful perspective. Moreover, the role of proof-relevance, and so proof-theoretic validity, in the setting of logics of inferons remains unexplored.  

Third, the use of the ideas we have introduced as tools for modelling systems seems to offer both theoretical and pragmatic challenges. The starting point for an inferonic account of system modelling would be to capture systems a the level of generality suggested by Figure~\ref{fig:generic}, supported by (Gen-Inf). The inferentialist account of information flow we have begun to develop would seem to be well-aligned with this objective, and would seem to have good prospects of integrating well with 
inferentialist accounts of the meaning of models, such as those suggested by Kuorikoski and Reijula  \cite{KR2023} and Bueno \cite{Bueno2014}.

% \begin{itemize}  
% \item Modal and substructural logics, including BI 
% \item Epistemic logics  
% \item Richer polarities 
% \item Negation and polarities, various  
% \item Compound inferons --- combining 
% \item Second-order
% \item More developed account of modelling 
% \item Modelling dynamics 
% \item Inferomorphisms in larger examples (such as airport)
% \end{itemize}

\subsection*{Acknowledgements} This work has been partially supported by a gift from Amazon Web Services. We thank many of the attendees of the 6th Symposium on Proof-theoretic Semantics (London, February 2026) for their comments, to Johan van Benthem for his encouragement of our exploration of an inferentialist account of information, and to Tim Button and Gabriele Brancati for %substantively 
discussing these ideas with us. Pym is grateful to University College London for its support of his sabbatical leave at the University of London's Institute of Philosophy, and to the Institute for its generous hosting.

\bibliographystyle{siam}  
\bibliography{sitbib} 

\end{document}